\documentclass[11pt, reqno]{amsart}

\usepackage[utf8]{inputenc}
\usepackage[T1]{fontenc}
\usepackage{amsmath, amssymb, amsthm, mathrsfs}
\usepackage{mathtools}
\usepackage{geometry}
\usepackage{enumitem}
\usepackage[colorlinks=true, linkcolor=blue, citecolor=red, urlcolor=blue]{hyperref}
\usepackage{microtype} % Better typography
\usepackage{comment}

\newtheorem*{theorem*}{Theorem}
\newtheorem*{theoremA}{Theorem A}
\newtheorem*{theoremB}{Theorem B}
\newtheorem*{theoremC}{Theorem C}
\newtheorem*{corollaryD}{Corollary D}
\newtheorem{theorem}{Theorem}[section]
\newtheorem{lemma}[theorem]{Lemma}
\newtheorem{proposition}[theorem]{Proposition}
\newtheorem{corollary}[theorem]{Corollary}

\theoremstyle{definition}

\theoremstyle{remark}
\newtheorem{remark}[theorem]{Remark}

\newcommand{\D}{\mathbb{D}}
\newcommand{\T}{\mathbb{T}}
\newcommand{\C}{\mathbb{C}}

\newcommand{\R}{\mathbb{R}}

\newcommand{\CB}{\mathcal C_{\mathrm{bd}}}
\newcommand{\CK}{\mathcal C_{\mathrm{comp}}}
\newcommand{\CS}{\mathcal C_{\mathrm{ss}}}
\newcommand{\CSell}[1]{\mathcal C_{\ell^{#1}\text{-ss}}}
\newcommand{\CSL}[1]{\mathcal C_{L^{#1}\text{-ss}}}

\title[Fixed-Copy Exponents and Strict Singularity]
{Fixed-Copy Exponent Sets and Strict Singularity of Composition Operators Between Hardy Spaces}

\author[Y. Shi and S. Li]{Yecheng Shi and Songxiao Li}

\address{Yecheng Shi \\ School of Mathematics and Statistics, Lingnan Normal University, Zhanjiang 524048, Guangdong, China}
\email{09ycshi@sina.cn}

\address{Songxiao Li \\ Department of Mathematics, Shantou University, Guangdong 515063, China}
\email{jyulsx@163.com}

\subjclass[2020]{Primary 47B33; Secondary 30H10, 47B07, 46B20, 46B25}

\keywords{Composition operators, Hardy spaces, fixed-copy exponent sets, strict singularity, Carleson measures}

\begin{document}

\begin{abstract}
For a bounded operator \(T\) between Banach spaces, we introduce the
\emph{fixed-copy exponent sets}
\[
\begin{aligned}
\operatorname{Fix}_{\ell}(T)
&:=
\{r\ge1:T\text{ fixes a copy of }\ell^r\},\\
\operatorname{Fix}_{L}(T)
&:=
\{r\ge1:T\text{ fixes a copy of }L^r(0,1)\}.
\end{aligned}
\]
For \(1\le p,q<\infty\), we completely determine both sets for every
bounded composition operator \(C_\varphi:H^p\to H^q\). In particular,
\(C_\varphi\) is strictly singular if and only if
\(\operatorname{Fix}_{\ell}(C_\varphi)=\varnothing\).
The classification also gives complete characterizations of the
\(\ell^r\)-singular and \(L^r(0,1)\)-singular subclasses for every
\(r\ge1\). As a further consequence, it completely resolves
Problems~4.3\textup{(1)} and~4.3\textup{(2)} posed by Laitila,
Nieminen, Saksman, and Tylli.
The proofs introduce a new localization method for producing fixed
copies from boundary lower estimates. Its main ingredient is a
localization theorem independent of composition operators: for every
measurable \(E\subset\mathbb T\) with \(m(E)>0\) and
\(1\le p<r\le2\), it constructs a single copy of \(L^r(0,1)\) in
\(H^p\) on which lower \(L^s(E)\) estimates hold simultaneously for
all \(1\le s\le p\), with constants depending on \(E\) only through
\(m(E)\). The classification combines this method with pullback
measure criteria, known fixed-copy results, and classical subspace
restrictions. For \(r<2\), the localization theorem is proved using
stable integrals and analytic lifting; the endpoint \(r=2\) is
handled by an \(E\)-adapted lacunary construction.
\end{abstract}
\maketitle

\section{Introduction and main results}
\label{sec:intro}

Let \(\D\) denote the open unit disk, \(\T=\partial\D\) its boundary,
and \(m\) the normalized Lebesgue measure on \(\T\). For $1\le p<\infty$, $H^{p}$
stands for the classical Hardy space over $\D$. 
Unless otherwise specified, all Banach spaces are complex.
We write \(L^r(0,1)\) for \(L^r(0,1;\C)\), and write
\(L^r(0,1;\R)\) when the real space is intended.
Every analytic self-map
$\varphi:\D\to\D$ induces a composition operator $C_{\varphi}$ acting by
\[
(C_{\varphi}f)(z)=f(\varphi(z)),\qquad z\in\D.
\]
Littlewood's subordination principle ensures that
$C_{\varphi}:H^{p}\to H^{p}$ is bounded for all $1\le p<\infty$.
Comprehensive expositions on composition operators appear in Shapiro \cite{Shapiro1993} 
and Cowen--MacCluer~\cite{CowenMacCluer1995}; foundational aspects of operator theory on spaces of
analytic functions are developed in Zhu~\cite{Zhu2007}.

Our starting point is the relation between strict singularity and the
Banach spaces whose copies are fixed by \(C_\varphi\). Recall that a
bounded operator \(T:X\to Y\) is strictly singular if it is not bounded
below on any infinite-dimensional closed subspace of \(X\); see
Kato~\cite{Kato1958}. We say that \(T\) fixes a copy of a Banach space \(Z\) if there is a
closed subspace \(M\subset X\), isomorphic to \(Z\), on which \(T\)
is bounded below; equivalently, there is an isomorphic embedding
\(J:Z\to X\) such that \(TJ\) is bounded below. Following
\cite{LNST2017}, \(T\) is \(Z\)-singular if it fixes no copy of \(Z\). We ask the following questions: when is
\(C_\varphi:H^p\to H^q\) strictly singular, and for which exponents
\(r\) does \(C_\varphi\) fix a copy of \(\ell^r\) or \(L^r(0,1)\)?

For the diagonal case \(p=q\), Laitila, Nieminen, Saksman, and Tylli
proved that \(C_\varphi:H^p\to H^p\) is noncompact if and only if it
fixes a copy of \(\ell^p\)
\cite[Theorem~1.2]{LNST2017}; hence compactness and strict
singularity coincide
\cite[Corollary~1.3]{LNST2017}. Throughout, we set
\(
E_{\varphi}=\{\zeta\in\T:|\varphi^{*}(\zeta)|=1\},
\)
where \(\varphi^*\) denotes the radial boundary limit function of
\(\varphi\). For \(p\ne2\), the condition \(m(E_\varphi)>0\) is
equivalent to \(C_\varphi\) fixing a copy of \(\ell^2\)
\cite[Theorem~1.4]{LNST2017}, and, when \(1<p<\infty\), also to
fixing a copy of \(L^p(0,1)\)
\cite[Theorem~1.5]{LNST2017}. We say that \(\varphi\) has positive
boundary contact if \(m(E_\varphi)>0\). These results settle the
fixed-copy questions at the exponents \(p\) and \(2\), but do not
determine the full families \(\ell^r\) and \(L^r(0,1)\) as \(r\)
varies.

At \(p=1\), Sarason~\cite{Sarason1992} proved that weak compactness
and compactness coincide for composition operators on \(H^1\), while
Cima and Matheson~\cite{CimaMatheson1994} characterized complete
continuity by \(m(E_\varphi)=0\). Jarchow observed that a composition
operator on \(H^1\) is compact if and only if it is weakly
conditionally compact~\cite[p.~95]{Jarchow1998}; by Rosenthal's
\(\ell^1\)-theorem~\cite{Rosenthal1974}, this is equivalent to fixing
no copy of \(\ell^1\).

Two problems in~\cite{LNST2017} are particularly relevant here. Problem~4.3(1) asks whether the main theorems of~\cite{LNST2017} extend
to bounded composition operators $C_{\varphi}:H^{p}\to H^{q}$ with
$p\neq q$. Problem~4.3(2) asks whether there is an analogue of
Theorem~1.5 at \(p=1\). A straightforward
generalization of Theorem~1.5 to~$p=1$ is impossible, because $L^{1}(0,1)$
admits no isomorphic embedding into~$H^{1}$; see \cite[p.~262]{KwapienPelczynski1976} for a detailed
discussion.

The off-diagonal theory of composition operators between Hardy spaces
has been studied extensively. The integrability improving case was
studied by Hunziker and Jarchow~\cite{HunzikerJarchow1991}, while
boundedness and compactness for \(p\ne q\) were characterized in
\cite{ContrerasHernandezDiaz2003,BlascoJarchow2005}; related criteria in terms of the Nevanlinna counting function are
given in \cite{PerezGonzalezRattyaVukotic2007}. When \(q<p\),
Goebeler~\cite[Corollary~5, p.~390]{Goebeler2001} proved that
\(C_\varphi:H^p\to H^q\) is compact if and only if
\(m(E_\varphi)=0\). Essential norms and related quantitative questions
have also been studied; see
\cite{GorkinMacCluer2004,Demazeux2011,ShiLi2018,Bayart2024}.

For weighted composition operators, Lindstr\"om, Miihkinen, and
Nieminen~\cite{LindstromMiihkinenNieminen2020} proved, in particular,
that if \(1\le q\le p<\infty\), \(uC_\varphi:H^p\to H^q\) is bounded,
\(u\not\equiv0\), and \(m(E_\varphi)>0\), then \(uC_\varphi\) fixes a
copy of \(\ell^2\). Specializing to $u\equiv1$ recovers the case
$r=2$ for composition operators $C_{\varphi}:H^{p}\to H^{q}$ when $q<p$.
In the present paper we obtain this unweighted case directly from a fixed
dyadic lacunary subspace (Corollary~4.4). Combined with Goebeler's theorem,
this yields the equivalence of compactness and strict singularity for
$C_{\varphi}:H^{p}\to H^{q}$ when $q<p$.

This leaves the intermediate exponents \(p<r<2\), strict
singularity when \(p<q\), the endpoint \(r=p\) when \(q<p<2\),
and the \(L^r(0,1)\) fixed-copy question on \(H^1\).

\medskip\noindent\textbf{Fixed-copy exponent sets.}
For a bounded operator \(T:X\to Y\), we introduce the
\emph{fixed-copy exponent sets}
\[
\begin{aligned}
\operatorname{Fix}_\ell(T)
&:=
\{r\in[1,\infty):T\text{ fixes a copy of }\ell^r\},\\
\operatorname{Fix}_L(T)
&:=
\{r\in[1,\infty):T\text{ fixes a copy of }L^r(0,1)\}.
\end{aligned}
\]
Since \(L^r(0,1)\) contains \(\ell^r\) as a complemented subspace,
we have
\(\operatorname{Fix}_{L}(T)\subseteq\operatorname{Fix}_{\ell}(T)\)
for every bounded \(T\). If \(T\) is strictly singular, then both exponent sets are empty.
The converse fails in general: certain infinite-dimensional Banach
spaces contain no isomorphic copy of \(\ell^r\) or \(L^r(0,1)\) for
any \(1\le r<\infty\), and the identity operator on such a space
serves as a counterexample.
These exponent sets are invariant under Banach space isomorphisms and
are monotone under operator factorization.
If \(U:X_0\to X\) and \(V:Y\to Y_0\) are isomorphisms, then
\(\operatorname{Fix}_{\ell}(VTU)=\operatorname{Fix}_{\ell}(T)\) and
\(\operatorname{Fix}_{L}(VTU)=\operatorname{Fix}_{L}(T)\).
Moreover, if \(S=ATB\) with bounded linear maps \(A\) and \(B\), then
\(\operatorname{Fix}_{\ell}(S)\subseteq\operatorname{Fix}_{\ell}(T)\)
and
\(\operatorname{Fix}_{L}(S)\subseteq\operatorname{Fix}_{L}(T)\). They can also be expressed in
terms of \(Z\)-singularity:
\[
\begin{aligned}
r\notin\operatorname{Fix}_{\ell}(T)
&\Longleftrightarrow
T\text{ is }\ell^r\text{-singular},\\
r\notin\operatorname{Fix}_{L}(T)
&\Longleftrightarrow
T\text{ is }L^r(0,1)\text{-singular}.
\end{aligned}
\]

Johnson and Schechtman~\cite{JohnsonSchechtman2008} use the term
\(Z\)-strictly singular for \(Z\)-singular operators. In particular,
they proved that, for \(1<p<2\), every \(\ell^p\)-singular operator
\(T:L^p\to L^p\) is \(\ell^r\)-singular for all \(p<r<2\).
Equivalently,
\[
        r\in\operatorname{Fix}_{\ell}(T)
        \text{ for some }p<r<2
        \quad\Longrightarrow\quad
        p\in\operatorname{Fix}_{\ell}(T).
\]
Dosev, Johnson, and Schechtman~\cite{DosevJohnsonSchechtman2013}
showed that the \(L^p\)-singular operators on \(L^p\) form the largest
proper closed ideal in \(\mathcal L(L^p)\), linking
\(L^p\)-singularity to its ideal structure.

Another related point of comparison is provided by the
\(L\)- and \(V\)-characteristic sets of an operator
\(R:L^\infty(0,1)\to L^1(0,1)\).
The \(L\)-characteristic set records the pairs \((1/p,1/q)\) for which
\(R:L^p(0,1)\to L^q(0,1)\) is bounded
\cite{HernandezSemenovTradacete2017}, while the
\(V\)-characteristic set records those for which it is strictly
singular but not compact
\cite{HernandezSemenovTradacete2017,
HernandezSemenovTradacete2023}.
There the source and target exponents vary; here the ambient spaces
are fixed, and the varying parameter is the exponent \(r\) for which
the operator fixes a copy of \(\ell^r\) or \(L^r(0,1)\).

We now state the main results in terms of the fixed-copy exponent sets.
Theorem~A determines both fixed-copy exponent sets in the
off-diagonal case \(p\ne q\), while Theorem~C gives the diagonal
classification, including the endpoint \(p=1\). Together,
Theorems~A and~C completely determine both fixed-copy exponent sets
for every bounded composition operator
\(C_\varphi:H^p\to H^q\), \(1\le p,q<\infty\). In particular,
\[
        C_\varphi:H^p\to H^q
        \text{ is strictly singular}
        \quad\Longleftrightarrow\quad
        \operatorname{Fix}_{\ell}(C_\varphi)=\varnothing.
\]
Moreover, Theorems~A and~C completely resolve
Problems~4.3\textup{(1)} and~4.3\textup{(2)}
of~\cite{LNST2017}, respectively.
Theorem~B provides the main new mechanism behind the
intermediate-exponent parts of these classifications by producing
localized analytic copies of \(L^r(0,1)\) on sets of positive measure.

\subsection*{Classification I}  We first state the classification when \(p\ne q\).

\begin{theoremA}
Let \(1\le p,q<\infty\),  \(p\ne q\), and let \(\varphi:\D\to\D\) be analytic.  If \(p<q\), assume that
\(C_\varphi:H^p\to H^q\) is bounded.  Then
\[
\begin{array}{c|c|c|c}
\textup{Case}
& \textup{Condition}
& \operatorname{Fix}_{\ell}(C_\varphi)
& \operatorname{Fix}_{L}(C_\varphi)\\
\hline
q<p
& m(E_\varphi)=0
& \varnothing
& \varnothing\\
q<p<2
& m(E_\varphi)>0
& (p,2]
& (p,2]\\
q<p,\ p\ge2
& m(E_\varphi)>0
& \{2\}
& \{2\}\\
p<q
& C_\varphi\textup{ bounded}
& \varnothing
& \varnothing
\end{array}
\]
When \(q<p\), boundedness is automatic and
\[
C_\varphi:H^p\to H^q\text{ is compact}
\quad\Longleftrightarrow\quad
m(E_\varphi)=0
\quad\Longleftrightarrow\quad
C_\varphi\text{ is strictly singular}.
\]
When \(p<q\), boundedness forces \(m(E_\varphi)=0\), and every
bounded \(C_\varphi:H^p\to H^q\) is strictly singular.
\end{theoremA}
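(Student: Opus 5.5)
I would organize the proof by the four rows of the table. I would prove the upper bounds (exclusions) from classical subspace structure of $H^p$ and $H^q$, and the lower bounds (inclusions) by producing explicit fixed copies. Throughout I use the reduction that $C_\varphi$ fixes a copy of $Z$ from $H^p$ to $H^q$ if and only if there is $M\subset H^p$, $M\simeq Z$, with $\|f\circ\varphi\|_{H^q}\gtrsim\|f\|_{H^p}$ on $M$. Via the pullback measure $\mu_\varphi=m\circ(\varphi^*)^{-1}$, this lower bound is the same as $\|f\|_{L^q(\mu_\varphi)}\gtrsim\|f\|_{H^p}$.

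\textbf{Case $p<q$.} Boundedness $H^p\to H^q$ with $q>p$ means $\mu_\varphi$ is a $(p,q)$-Carleson measure, so $\mu_\varphi(S(I))\lesssim |I|^{q/p}=o(|I|)$. This forces $\mu_\varphi|_{\T}=0$, and since $\mu_\varphi|_\T\ge m|_{E_\varphi}$ (on $E_\varphi$ the pushforward is absolutely continuous by the standard Aleksandrov--Clark argument), we get $m(E_\varphi)=0$. For strict singularity I would factor $C_\varphi=C_\varphi^{(p\to s)}$ with $p<s<q$: the operator $H^p\to H^q$ factors through $H^s$. The measure $\mu_\varphi$ is vanishing $(s,q)$-Carleson because $|I|^{q/p}/|I|^{q/s}\to0$, so $C_\varphi:H^s\to H^q$ is compact. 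Hence $C_\varphi:H^p\to H^q$ is compact, and in particular strictly singular, so both sets are empty.

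\textbf{Case $q<p$, $m(E_\varphi)=0$.} Boundedness is automatic from $H^p\subset H^q$ and Littlewood. Compactness is Goebeler's theorem, so both sets are empty. The equivalences in the final display then follow once the positive-contact rows show that both sets are nonempty.

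\textbf{Case $q<p$, $m(E_\varphi)>0$, upper bounds.} Here $\operatorname{Fix}_L\subseteq\operatorname{Fix}_\ell$, so it suffices to bound $\operatorname{Fix}_\ell$. A fixed copy of $\ell^r$ must embed in both $H^p\simeq L^p$ (for $p>1$) and $H^q$.
\begin{itemize}
\item For $p\ge2$, the copies of $\ell^r$ in $L^p$ have $r\in\{2,p\}$.
\item $r=p>2$ is excluded because $\ell^p$ with $p>2$ does not embed in $L^q$ for $q<p$ and $q\le 2$. If $q>2$, I would instead use that a fixed $\ell^p$ would yield an $\ell^p$-sequence in $H^q$, impossible for $q<p$ with $p>2$.
\item For $p<2$, the copies of $\ell^r$ in $L^p$ have $r\in[p,2]$. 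The endpoint $r=p$ must be excluded. Take a normalized $\ell^p$-basis in $H^p$; it is weakly null, so after a gliding hump it is essentially disjointly supported in the image. But $\|\cdot\|_{H^q}$ with $q<p$ on disjoint-type blocks gives $q$-concavity, i.e. $\|\sum a_n g_n\|_q \ge (\sum|a_n|^q\cdots)$, which conflicts with a uniform $\ell^p$ lower bound only via a Kadec--Pełczyński dichotomy. Concretely, in $L^q$ a sequence equivalent to the $\ell^p$ basis with $q<p<2$ must be equi-integrable in the $q$-th power, and I would use compactness of $C_\varphi$ off $E_\varphi$ together with equi-integrability to derive a contradiction.
\item For $p=1$, the same argument excludes $\ell^1$, consistent with Rosenthal.
\end{itemize}
I expect this endpoint exclusion to be a delicate but secondary obstacle.

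\textbf{Lower bounds.} The value $r=2$ comes from the dyadic lacunary subspace: Corollary 4.4 combined with Paley/Zygmund equivalence of all $L^s$ norms on lacunary series localized to $E_\varphi$. For $p<r<2$ I apply Theorem B with $E=E_\varphi$, obtaining $M\simeq L^r(0,1)$ in $H^p$ with $\|f\|_{L^q(E)}\gtrsim\|f\|_{H^p}$ on $M$ (using $s=q\le p$). Pulling back through $\varphi^*$ restricted to $E_\varphi$ requires $\|f\circ\varphi^*\|_{L^q}\gtrsim\|f\|_{L^q(\varphi^*(E_\varphi))}$. This follows from the absolute continuity of $m$ relative to $\mu_\varphi$ on $\varphi^*(E_\varphi)$, with $E$ chosen as a positive-measure subset of $\varphi^*(E_\varphi)$ where the density of $\mu_\varphi$ is bounded below. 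This is where the uniformity of Theorem B in $m(E)$ matters.

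**The main obstacle.** I expect the pullback step from boundary contact to a lower estimate on a set $E$ to be the main obstacle.
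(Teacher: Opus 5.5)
\textbf{The row \(p<q\).} Your argument here has a genuine gap, and its conclusion is actually false. The factorization you describe does not exist. There is no inclusion \(H^p\hookrightarrow H^s\) for \(s>p\); the inclusion runs the other way, \(H^s\subset H^p\). So \(C_\varphi:H^p\to H^q\) does not factor through \(C_\varphi:H^s\to H^q\).

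The claim that every bounded \(C_\varphi:H^p\to H^q\) is compact is wrong. The critical lens map \(\varphi_{p/q}\) gives a bounded noncompact operator (Proposition~\ref{prop:lens-example}); this is why Corollary~D has \(\CK(H^p,H^q)\subsetneq\CS(H^p,H^q)\).

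Your vanishing-Carleson computation is correct: it shows that \(C_\varphi:H^s\to H^q\) is compact for every \(s>p\) (Corollary~\ref{cor:boundary-obstruction}). But this compactness can only be pulled back to \(H^p\) on suitable subspaces. The missing argument runs as follows. Suppose \(C_\varphi\) is bounded below on an infinite-dimensional \(M\subset H^p\).
\begin{itemize}
\item If \(p<2\), then \(C_\varphi(M)\subset H^q\) forces \(M\) to be reflexive when \(p=1\), and to contain no \(\ell^p\) when \(1<p<2\), by the \(\ell^r\)-restrictions in \(H^q\). The Kwapie\'n--Pe{\l}czy\'nski multiplier factorization then gives \(t>p\), an outer \(g\in H^{pt/(t-p)}\), and a bounded \(U:M\to H^t\) with \(f=g\,Uf\). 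Hence \(C_\varphi|_M=(C_\varphi M_g)U\). The operator \(C_\varphi M_g:H^t\to H^q\) is compact: approximate \(g\) by polynomials in \(H^{pt/(t-p)}\) and use boundedness of \(C_\varphi:H^p\to H^q\).
\item If \(p\ge2\), the Kadec--Pe{\l}czy\'nski subspace theorem, together with the absence of \(\ell^p\) in \(H^q\), reduces to \(M\simeq\ell^2\). On the Hilbertian image \(C_\varphi(M)\subset L^q\), \(q>2\), the \(L^q\)- and \(L^2\)-norms are equivalent. This upgrades compactness of \(C_\varphi:H^p\to H^p\) to compactness of \(C_\varphi|_M:M\to H^q\).
\end{itemize}
In both cases \(C_\varphi|_M\) is compact and bounded below on an infinite-dimensional space, which is a contradiction.

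\textbf{The endpoint \(r=p\) when \(q<p<2\).} Your exclusion of this endpoint is not a proof, and the route you sketch cannot close as stated. The space \(\ell^p\) does embed into \(H^q\) for \(q<p\le2\), and the stable/analytic copies have unit ball bounded in some \(H^t\) with \(t>q\), hence \(q\)-equi-integrable. So equi-integrability of the image \(C_\varphi(M)\) yields no contradiction by itself, and no compactness of \(C_\varphi\) ``off \(E_\varphi\)'' is needed.

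The key observation is on the source side. Littlewood subordination on \(H^q\) gives
\(c\|f\|_{H^p}\le\|C_\varphi f\|_{H^q}\le B\|f\|_{H^q}\) for \(f\in M\). So the \(H^p\)- and \(H^q\)-norms are equivalent on \(M\), and by log-convexity so are the \(H^p\)- and \(H^1\)-norms. Rosenthal's theorem on such subspaces of \(L^p(\T)\), \(p<2\), then says that \(M\) contains no copy of \(\ell^p\). Your remark on \(p=1\) is vacuous here, since \(q<p\) with \(q\ge1\) forces \(p>1\).

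\textbf{What matches the paper.} The remaining parts follow the paper:
\begin{itemize}
\item Goebeler's theorem for \(m(E_\varphi)=0\);
\item the \(\ell^r\)-restrictions in both \(H^p\) and \(H^q\) when \(p\ge2\);
\item the lower bounds via Theorem~B and the dyadic space \(M_2\).
\end{itemize}
For the lower bounds, these must be applied to a set \(F\) on which the density of \((\mu_\varphi)_\T\) is bounded below, not to \(E_\varphi\) itself. That pullback step is routine once you have \(d(\mu_\varphi)_\T=h\,dm\) with \(h\in L^\infty\) and \(\int h\,dm=m(E_\varphi)>0\). It is not the main obstacle.
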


\subsection*{The localization theorem}

The known endpoint results do not by themselves yield the intermediate
exponents. On the diagonal, the known fixed copies of \(\ell^p\) and
\(\ell^2\) may occur on different subspaces and therefore provide no
direct mechanism for obtaining the exponents \(p<r<2\). The missing
ingredient is a copy of \(L^r(0,1)\) in \(H^p\) on which the
restriction operator to a prescribed measurable set
\(E\subset\T\) of positive measure is bounded below. Moreover, the
same copy must satisfy lower \(L^s(E)\)-estimates for every
\(1\le s\le p\), with constants depending on \(E\) only through
\(m(E)\). The following theorem provides this localization and is
independent of composition operators.

\begin{theoremB}
Let \(1\le p<r\le2\), and let \(E\subset\T\) be measurable with \(\delta=m(E)>0\). Then there exists a closed subspace
\[
        M_{p,r,E}\subset H^p,
        \qquad
        M_{p,r,E}\simeq L^r(0,1),
\]
such that, for every \(1\le s\le p\), there is a constant \(c_{p,r,s}(\delta)>0\), depending only on \(p\), \(r\), \(s\), and \(\delta\), satisfying
\[
        \|f\|_{L^s(E)}
        \ge
        c_{p,r,s}(\delta)\|f\|_{H^p},
        \qquad
        f\in M_{p,r,E}.
\]
\end{theoremB}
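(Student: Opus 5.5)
The construction splits into two cases. For \(r<2\) we use \(r\)-stable random variables realized as boundary values of analytic functions. For \(r=2\) we use a lacunary system adapted to \(E\). In both cases the key quantitative input is a \emph{uniform equi-integrability of \(|f|^p\)} on the subspace: every \(f\) in the copy, normalized in \(H^p\), should have \(|f|^p\) spread out over \(\T\) in a way that depends only on the measure of sets, not on their geometry. Once \(E\) of measure \(\delta\) is fixed, this forces a definite fraction of the mass of \(f\) onto \(E\). Passing from \(L^p(E)\) to \(L^s(E)\) for \(s<p\) is then done by H\"older/Paley--Zygmund, provided we also control a higher moment of \(f\).

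For \(p<r<2\), I would start from the classical fact that \(L^r(0,1)\) embeds isometrically into \(L^p\) via \(g\mapsto\int g\,dX\), where \(X\) is a symmetric \(r\)-stable random measure. Concretely, \(\int g\,dX\) equals \(\|g\|_r\,\theta\) in distribution, with \(\theta\) a standard symmetric \(r\)-stable variable. Realize the probability space on \((\T,m)\). The stable process is a measurable function of Lebesgue measure on \(\T\), and this holds for any probability space, so the law of \(\theta\) restricted to \(E\) can be controlled. Next, lift analytically. Let \(F=\int g\,dX\), real valued in \(L^p(\T)\) with \(p\ge1\). Replace it by \(F+i\tilde F\), or rather take \(f=P_+F\), the Riesz projection.

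For \(p>1\) this is bounded on \(L^p\), but it must also be bounded below on the copy. I would instead build the copy from variables that are already boundary values: \(\theta_j = h_j^*\) with \(h_j\) inner-type analytic functions with independent (Aleksandrov–Clark / Rudin-type) distributions. For example, take compositions \(\Phi\circ u_j\), where the \(u_j\) are independent inner functions preserving \(m\) and \(\Phi\) is analytic with real part distributed \(r\)-stably. Since inner functions fixing \(0\) are measure preserving, \(\Phi\circ u\) has the same distribution as \(\Phi^*\). The lower estimate on \(E\) then follows from the distributional identity \(|\sum a_j\Phi\circ u_j| \sim \|a\|_r|\Phi^*|\)-type bounds. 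These are valid on \(E\) up to a loss depending on \(m(E)\), because the tail \(m(|\Phi^*|>t)\) is a fixed function: given \(\delta\), choose \(t\) with \(m(|\Phi^*|<t\|a\|_r)<\delta/2\), so \(|f|\ge t\|a\|_r\) on at least half of \(E\). This gives \(\|f\|_{L^s(E)}\ge t(\delta/2)^{1/s}\|a\|_r\gtrsim \|f\|_{H^p}\) for all \(s\), with no need for \(s\le p\) beyond finiteness. The step from \(\ell^r\)-type sums to full \(L^r(0,1)\) uses the stable random measure indexed by dyadic intervals, and the independence of \(\{u_j\}\) must be realized jointly (e.g.\ \(u_j(z)=z^{N_j}\)-type Steinhaus models, or a product structure on \(\T\) via a measure-preserving isomorphism lifted analytically). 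This joint analytic realization of independence is the main obstacle.

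For \(r=2\), take a Hadamard lacunary sequence \(z^{n_k}\). By Zygmund's theorem, \(\|\sum a_kz^{n_k}\|_{L^s(E)}\ge c(\delta)\|a\|_2\) once the \(n_k\) are sparse enough relative to \(E\); this is the \(E\)-adapted choice. Uniformity in \(\delta\) comes from Paley--Zygmund, using the \(L^4\) bound with constant independent of \(E\).
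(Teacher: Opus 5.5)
Your $r=2$ case is essentially the paper's argument (Lemma~\ref{lem:lacunary-set} and Corollary~\ref{cor:lacunary-tail}). For $p<r<2$, however, there is a genuine gap.

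\textbf{The construction is left unfinished, and the detour is unnecessary.} The construction you settle on needs inner functions $u_j$ whose boundary values are jointly independent. To reach $L^r(0,1)$ rather than $\ell^r$, it even needs an entire stable random measure realized analytically. You leave exactly this step open, and it is not routine. Monomials $z^{N_j}$ are never independent, since $(z^{N_1})^{N_2}=(z^{N_2})^{N_1}$. In fact $z$ is independent of no inner $v$ with $v(0)=0$: independence would force $\widehat{v^k}(j)=0$ for all $j\ge1$, hence $v^k\equiv v(0)^k=0$, which is impossible for inner $v$.

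Your reason for abandoning the direct route is also mistaken. Take $u=S_rx$ real-valued on $\T$ (Kanter's integral transferred to $(\T,m)$), and lift by $\mathcal Au=2P_+u-\widehat u(0)$ instead of $P_+u$. Then $\operatorname{Re}\mathcal Au=u$, so $|\mathcal Au|\ge|u|$ pointwise, and the lower bound $\|\mathcal Au\|_{H^p}\ge\|u\|_{L^p}=\kappa_{r,p}\|x\|_{L^r}$ comes for free. For the upper bound, pick $t\in(p,r)$; the M.~Riesz theorem on $L^t$ gives $\|\mathcal Au\|_{H^p}\le\|\mathcal Au\|_{H^t}\lesssim\|u\|_{L^t}\asymp\|x\|_{L^r}$. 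This also covers $p=1$.

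Your small-ball argument, applied to $\operatorname{Re}\mathcal Au=u$, then yields $\|\mathcal Au\|_{L^1(E)}\ge c(\delta)\|x\|_{L^r}$ for real $x$ and every $E$. That is a legitimate alternative to the paper's device of supporting $u$ inside $E$. Note that the property doing the work is anti-concentration, $m\{|f|<\tau\|f\|\}\le\varepsilon(\tau)$. Equi-integrability of $|f|^p$ only bounds mass on small sets from above and cannot force mass onto $E$.

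\textbf{The second gap is complex linearity.} The bound $|f|\ge|\operatorname{Re}f|$ with $\operatorname{Re}f\overset{d}{=}\|a\|_r\theta$ works only for real coefficients. For complex $a_j$, $\operatorname{Re}(a_j\Phi\circ u_j)$ involves $\operatorname{Im}\Phi^*$, the conjugate function, whose law is not determined by that of $\operatorname{Re}\Phi^*$. So the claimed relation $|\sum a_j\Phi\circ u_j|\sim\|a\|_r|\Phi^*|$ is unjustified. The same problem arises in the lifted route for $F=Ta+iTb$: there is no pointwise lower bound for $|F|$.

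The paper resolves this in three steps:
\begin{itemize}
\item It makes $u$ invariant under $\zeta\mapsto\overline\zeta$, so $\mathcal Au$ has real Taylor coefficients and $Ta=(F+\mathfrak cF)/2$.
\item It works on a conjugation-invariant set $G$ inside a rotation of $E$ with $m(G)\ge\delta^2$, found by averaging $m(E\cap\eta\overline E)$ over $\eta\in\T$.
\item It then has $\|F\|_{L^1(G)}\ge\|Ta\|_{L^1(G)}\ge\|u_a\|_{L^1(G)}$.
\end{itemize}
Without a device of this kind you obtain only a real-linear copy, not a complex subspace isomorphic to $L^r(0,1)$.
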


The existence of copies of \(L^r(0,1)\) without localization is
classical. Stable laws have long been used to realize \(L^r(0,1)\)
inside \(L^t\) spaces for \(1\le t<r\le2\); see
\cite{BretagnolleDacunhaCastelleKrivine1966}. The stable integral
used below is taken from Kanter
\cite[Sections~4--5 and the theorem, pp.~405--406]{Kanter1973}.
Kwapie\'n and Pe{\l}czy\'nski combined such realizations with the
Boas transform to obtain analytic copies of \(L^r(0,1)\)
\cite[Proposition~2.1 and its proof]{KwapienPelczynski1976};
see also Hardin~\cite{Hardin1982} for related stable embedding
results.

The new point in Theorem~B is localization on an arbitrary prescribed
measurable set \(E\), simultaneously for all \(1\le s\le p\), with
constants depending on \(E\) only through \(m(E)\). This quantitative
localization converts boundary lower estimates into fixed copies for
all intermediate exponents. The proof gives explicit lower bounds for
\(c_{p,r,s}(\delta)\); see Remark~\ref{rem:local-bound}.

By contrast, closed-range, reverse-Carleson, and related results
concern lower estimates on the whole Hardy space; see, for example,
\cite{CimaThomsonWogen1974,Zorboska1994,
LefevreLiQueffelecRodriguezPiazza2012,
HartmannMassanedaNicolauOrtegaCerda2014,
ChalendarPartington2019,Dyakonov2025}.
Indeed, when \(0<m(E)<1\), an outer-function argument shows that the
boundary restriction map
\(R_E:H^p\to L^s(E)\), \(R_Ef=f^*|_E\), is not bounded below for any
\(1\le s\le p\). Thus the localization in Theorem~B is necessarily a
subspace phenomenon.

\subsection*{Classification II}  We now state the diagonal classification.

\begin{theoremC}\label{thm:C}
Let \(1\le p<\infty\), and let \(\varphi:\D\to\D\) be analytic. Then
\[
\begin{array}{c|c|c|c}
\textup{Case}
& \textup{Range of }p
& \operatorname{Fix}_{\ell}(C_\varphi)
& \operatorname{Fix}_{L}(C_\varphi)\\
\hline
C_\varphi\textup{ compact}
& 1\le p<\infty
& \varnothing
& \varnothing\\
C_\varphi\textup{ noncompact}
& p=2
& \{2\}
& \{2\}\\
C_\varphi\textup{ noncompact},\ m(E_\varphi)=0
& p\ne2
& \{p\}
& \varnothing\\
m(E_\varphi)>0
& p=1
& [1,2]
& (1,2]\\
m(E_\varphi)>0
& 1<p<2
& [p,2]
& [p,2]\\
m(E_\varphi)>0
& 2<p<\infty
& \{2,p\}
& \{2,p\}
\end{array}.
\]
Moreover,
\[
        C_\varphi:H^p\to H^p\text{ is compact}
        \quad\Longleftrightarrow\quad
        C_\varphi:H^p\to H^p\text{ is strictly singular}.
\]
\end{theoremC}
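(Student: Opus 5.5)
The proof proceeds row by row: upper bounds from subspace restrictions on \(H^p\), lower bounds from known fixed-copy results together with Theorem~B. Compactness and strict singularity agree in every row, since compact operators fix nothing and every noncompact case yields \(p\) or \(2\) in \(\operatorname{Fix}_{\ell}\). The compact row is immediate.

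\textbf{Upper bounds.} For \(1<p<\infty\), \(H^p\simeq L^p\), so \(\ell^r\hookrightarrow H^p\) forces \(r\in\{p,2\}\) when \(p>2\) and \(r\in[p,2]\) when \(p<2\), by the Kadec--Pe{\l}czy\'nski dichotomy and stable-type considerations. Similarly, \(L^r\hookrightarrow L^p\) forces \(r\in[p,2]\) for \(p<2\) and \(r\in\{2,p\}\) for \(p>2\). For \(p=1\), \(H^1\) has cotype \(2\), so \(r\le2\). Moreover \(L^1\) does not embed in \(H^1\) (Kwapie\'n--Pe{\l}czy\'nski), so \(\operatorname{Fix}_L\subseteq(1,2]\). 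For \(p=2\) everything is \(\{2\}\).

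\textbf{Noncompact rows with \(m(E_\varphi)=0\), \(p\ne2\).} By \cite[Theorem~1.2]{LNST2017}, \(p\in\operatorname{Fix}_{\ell}\). For \(p=1\) this is Rosenthal's theorem combined with Jarchow. By \cite[Theorem~1.4]{LNST2017}, \(2\notin\operatorname{Fix}_\ell\). For intermediate \(p<r<2\), I would argue that fixing \(\ell^r\) forces \(m(E_\varphi)>0\). A normalized \(\ell^r\)-basis in \(H^p\) with \(r>p\) is, after passing to a subsequence and perturbing, a weakly null sequence whose mass cannot concentrate on small sets. Then the pullback-measure criterion, with \(C_\varphi f\) controlled by the Carleson measure \(\mu_\varphi\) restricted near \(\T\), gives that \(\|C_\varphi f_n\|_p\to0\) unless boundary contact of positive measure occurs. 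Concretely, the part of \(\mu_\varphi\) on \(\T\) is absolutely continuous exactly when \(m(E_\varphi)>0\); otherwise only the vanishing-Carleson-type contribution survives, and it acts compactly on uniformly integrable sequences. I expect this step to be the main obstacle. I would first try an Enflo--Rosenthal / Johnson--Schechtman-type reduction: an \(\ell^r\) copy with \(p<r\) yields an \(\ell^r\) sequence that is \(p\)-equi-integrable. Then \(\operatorname{Fix}_L=\varnothing\) follows from \(\operatorname{Fix}_L\subseteq\operatorname{Fix}_\ell\): \(\{p\}\) is excluded because \(L^p\) copies force \(m(E_\varphi)>0\) by \cite[Theorem~1.5]{LNST2017}, and \(L^1\) does not embed in \(H^1\) for \(p=1\).

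\textbf{Rows with \(m(E_\varphi)>0\).} The exponents \(2\) and \(p\) (for \(p>1\)) come from \cite[Theorems~1.4, 1.5]{LNST2017}. For \(p=1\), \(1\in\operatorname{Fix}_\ell\) because \(m(E_\varphi)>0\) makes \(C_\varphi\) noncompact. For \(p<r\le2\), apply Theorem~B with \(E=E_\varphi\) and \(s=p\) to obtain \(M=M_{p,r,E_\varphi}\simeq L^r\). Since \((C_\varphi f)^*=f^*\circ\varphi^*\) a.e., the change of variables on \(E_\varphi\) gives
\[
\|C_\varphi f\|_{H^p}^p\ge\int_{E_\varphi}|f^*\circ\varphi^*|^p\,dm=\int_{\T}|f^*|^p\,d\nu,
\]
where \(\nu\) is the pushforward of \(m|_{E_\varphi}\) under \(\varphi^*\). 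By Aleksandrov--Clark theory, \(\nu\) is absolutely continuous, with density bounded below by some \(c>0\) on a set \(F\) of positive measure. Applying Theorem~B with \(E=F\) instead then yields \(\|C_\varphi f\|_p\gtrsim\|f\|_{L^p(F)}\gtrsim\|f\|_{H^p}\) on \(M_{p,r,F}\). This fixes \(L^r\), hence also \(\ell^r\), giving \([p,2]\), respectively \((1,2]\) and \([1,2]\).
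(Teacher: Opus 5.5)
\textbf{Verdict: there is a genuine gap.} Most of your outline is sound and matches the paper. This includes the ambient upper bounds, the appeals to \cite[Theorems~1.2, 1.4, 1.5]{LNST2017}, and the equivalence of compactness and strict singularity. It also includes the positive-contact rows: your pushforward density is the paper's \(h\) from Lemma~\ref{lem:boundary-part}, and Theorem~B must be applied to the set \(F\) where \(h\) is bounded below. Your preliminary application with \(E=E_\varphi\) plays no role.

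\textbf{The gap.} It is the step you flag yourself: excluding \(p<r<2\) (and \(1<r<2\) when \(p=1\)) from \(\operatorname{Fix}_{\ell}(C_\varphi)\) when \(m(E_\varphi)=0\) and \(C_\varphi\) is noncompact. What you offer there is not an argument, and its stated justification is wrong. The boundary part of \(\mu_\varphi\) is always absolutely continuous; it is \emph{nonzero} exactly when \(m(E_\varphi)>0\). When \(m(E_\varphi)=0\) but \(C_\varphi\) is noncompact, \(\mu_\varphi\) lives on \(\D\) but is \emph{not} a vanishing Carleson measure (Proposition~\ref{prop:carleson}\textup{(ii)} with Lemma~\ref{lem:pullback-id}). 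So it is false that ``only the vanishing-Carleson-type contribution survives''. That such a non-vanishing Carleson measure still acts compactly on equi-integrable families is exactly what has to be proved, and the proposal gives no mechanism for it.

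\textbf{How the paper closes it.} The paper uses the Kwapie\'n--Pe{\l}czy\'nski multiplier factorization (Lemmas~\ref{lem:kp-below2} and~\ref{lem:kp-H1}). If \(M\simeq\ell^r\) with \(r\ne p\), then \(M\) contains no \(\ell^p\), and is reflexive when \(p=1\). Hence \(j_M=M_gU\) with \(U:M\to H^t\) bounded for some \(t>p\) and \(g\in H^{pt/(t-p)}\). Since \(m(E_\varphi)=0\), Goebeler's theorem makes \(C_\varphi:H^t\to H^p\) compact. Approximating \(g\) by polynomials then makes \(C_\varphi M_g\) compact, so \(C_\varphi|_M\) is compact (Proposition~\ref{prop:diag-compact}).

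\textbf{Completing your route instead.} Your equi-integrability idea can also be made to work, but only with a real estimate.
\begin{enumerate}[label=\textup{(\arabic*)}]
\item By the Kadec--Pe{\l}czy\'nski dichotomy, the unit ball of \(M\) is \(p\)-equi-integrable on \(\T\).
\item For \(0<s<p\), one has \(|f|^s\le P[|f^*|^s]\) in \(\D\). Split \(|f^*|^s\) at height \(K^s\).
\item The Carleson embedding of \(P[\cdot]\) into \(L^{p/s}(\mu_\varphi|_\D)\), with \(p/s>1\), makes the large part uniformly small.
\item The bounded part contributes at most \(K^p\mu_\varphi(\{\rho<|w|<1\})\), which tends to \(0\) as \(\rho\to1\). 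This uses \(\|C_\varphi f\|_{H^p}^p=\int_\D|f|^p\,d\mu_\varphi\), valid because \(\mu_\varphi(\T)=0\).
\item The region \(|w|\le\rho\) is handled by normal-family convergence.
\end{enumerate}

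\textbf{A knock-on dependence.} Your derivation of \(\operatorname{Fix}_{L}(C_\varphi)=\varnothing\) in this row relies on \(\operatorname{Fix}_{\ell}(C_\varphi)=\{p\}\), and hence on the gap. The paper avoids this dependence: any copy of \(L^r(0,1)\) contains the Rademacher copy of \(\ell^2\), which \cite[Theorem~1.4]{LNST2017} excludes.
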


For \(p=1\),
\(1\in\operatorname{Fix}_{\ell}(C_\varphi)\) if and only if
\(C_\varphi\) is noncompact, whereas
\(1\notin\operatorname{Fix}_{L}(C_\varphi)\) for every composition
operator \(C_\varphi:H^1\to H^1\). Consequently,
\(\operatorname{Fix}_{L}(C_\varphi)=(1,2]\) if
\(m(E_\varphi)>0\), and is empty if \(m(E_\varphi)=0\). The \(p=1\) part of Theorem~C completely settles
\cite[Problem~4.3(2)]{LNST2017}. In the case \(p>2\), classical structural constraints imposed on the subspaces of \(H^p\)  exclude every exponent other than
\(2\) and \(p\).

For each \(1\le r<\infty\), Theorems~A and~C determine which
composition operators fix no copy of \(\ell^r\) or \(L^r(0,1)\).
The corresponding relative class characterizations are given below.

For Banach spaces \(X\) and \(Y\), let \(\mathcal L(X,Y)\) denote the
space of bounded linear operators from \(X\) to \(Y\). Let
\(\mathcal K(X,Y)\) and \(\mathcal S(X,Y)\) denote the classes of
compact and strictly singular operators in \(\mathcal L(X,Y)\),
respectively. For \(1\le r<\infty\), let
\(\mathcal S_{\ell^r}(X,Y)\) and \(\mathcal S_{L^r}(X,Y)\) denote the
classes of operators in \(\mathcal L(X,Y)\) that fix no copy of
\(\ell^r\) and \(L^r(0,1)\), respectively. Thus
\[
T\in\mathcal S_{\ell^r}(X,Y)
\Longleftrightarrow
r\notin\operatorname{Fix}_{\ell}(T),
\qquad
T\in\mathcal S_{L^r}(X,Y)
\Longleftrightarrow
r\notin\operatorname{Fix}_{L}(T).
\]
For operators on a single Hardy space, one has the general identity
\[
        \mathcal S(H^p,H^p)
        =
        \mathcal S_{\ell^2}(H^p,H^p)
        \cap
        \mathcal S_{\ell^p}(H^p,H^p),
        \qquad
        1<p<\infty,\quad p\ne2;
\]
see \cite[(1.4)]{LNST2017}. Thus an operator on \(H^p\) is strictly
singular if and only if it fixes neither a copy of \(\ell^2\) nor a
copy of \(\ell^p\).

Within the class of composition operators, for parameters satisfying \(1\le p,q<\infty\), define
  \[
\begin{aligned}
\CB(H^p,H^q)
&:=
\bigl\{
C_\varphi\in\mathcal L(H^p,H^q):
\varphi:\D\to\D\text{ is analytic}
\bigr\},\\
\CK(H^p,H^q)
&:=
\CB(H^p,H^q)\cap\mathcal K(H^p,H^q),\\
\CS(H^p,H^q)
&:=
\CB(H^p,H^q)\cap\mathcal S(H^p,H^q),\\
\CSell{r}(H^p,H^q)
&:=
\CB(H^p,H^q)\cap\mathcal S_{\ell^r}(H^p,H^q),\\
\CSL{r}(H^p,H^q)
&:=
\CB(H^p,H^q)\cap\mathcal S_{L^r}(H^p,H^q),\\
\mathcal C_0(H^p,H^q)
&:=
\bigl\{
C_\varphi\in\CB(H^p,H^q):
m(E_\varphi)=0
\bigr\}.
\end{aligned}
\]
For the case \(p=q\), we adopt the shorthand notation
 \(
\mathcal C_0(H^p):=\mathcal C_0(H^p,H^p).
\)

\begin{corollaryD}
Let \(1\le p,q<\infty\). 

\begin{enumerate}[label=\textup{(\roman*)}]
\item If \(q<p\), then
\[
\CK(H^p,H^q)
=
\CS(H^p,H^q)
=
\mathcal C_0(H^p,H^q)
\subsetneq
\CB(H^p,H^q).
\]
For every \(1\le r<\infty\),
\[
\CSell{r}(H^p,H^q)
=
\CSL{r}(H^p,H^q)
=
\begin{cases}
\CK(H^p,H^q),
& 1<p<2,\quad p<r\le2,\\[1mm]
\CK(H^p,H^q),
& p\ge2,\quad r=2,\\[1mm]
\CB(H^p,H^q),
& \textup{otherwise}.
\end{cases}
\]

\item If \(p<q\), then, for every \(1\le r<\infty\),
\[
\begin{aligned}
\CK(H^p,H^q)
&\subsetneq
\CS(H^p,H^q)
=
\CSell{r}(H^p,H^q)\\
&=
\CSL{r}(H^p,H^q)
=
\mathcal C_0(H^p,H^q)
=
\CB(H^p,H^q).
\end{aligned}
\]

\item If \(p=q\), then
\[
\CK(H^p,H^p)
=
\CS(H^p,H^p)
\subsetneq
\mathcal C_0(H^p)
\subsetneq
\CB(H^p,H^p).
\]
Moreover, for every \(1\le r<\infty\),
\[
\CSell{r}(H^p,H^p)
=
\begin{cases}
\CK(H^p,H^p),
& r=p,\\[1mm]
\mathcal C_0(H^p),
& 1\le p<2,\quad p<r\le2,\\[1mm]
\mathcal C_0(H^p),
& 2<p<\infty,\quad r=2,\\[1mm]
\CB(H^p,H^p),
& \textup{otherwise},
\end{cases}
\]
whereas
\[
\CSL{r}(H^p,H^p)
=
\begin{cases}
\mathcal C_0(H^1),
& p=1,\quad 1<r\le2,\\[1mm]
\mathcal C_0(H^p),
& 1<p<2,\quad p\le r\le2,\\[1mm]
\CK(H^2,H^2),
& p=2,\quad r=2,\\[1mm]
\mathcal C_0(H^p),
& 2<p<\infty,\quad r\in\{2,p\},\\[1mm]
\CB(H^p,H^p),
& \textup{otherwise}.
\end{cases}
\]
\end{enumerate}
\end{corollaryD}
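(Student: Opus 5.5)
The plan is to derive Corollary~D purely by bookkeeping from Theorems~A and~C, using the dictionary
\[
C_\varphi\in\CSell{r}\iff r\notin\operatorname{Fix}_{\ell}(C_\varphi),\qquad C_\varphi\in\CSL{r}\iff r\notin\operatorname{Fix}_{L}(C_\varphi).
\]
I would also use the fact, recorded in Theorems~A and~C, that strict singularity is equivalent to $\operatorname{Fix}_\ell(C_\varphi)=\varnothing$. For each regime of $(p,q)$ and each $r$, the procedure is the same. First, identify the set of symbols $\varphi$ for which $r$ lies in the relevant exponent set. Then read off which of the classes $\CK$, $\mathcal C_0$, $\CB$ its complement within $\CB$ equals.

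\emph{Case (i), $q<p$.} Theorem~A gives compact $\iff m(E_\varphi)=0\iff$ strictly singular, so $\CK=\CS=\mathcal C_0$. For a fixed $r$, the table shows that $r\in\operatorname{Fix}_\ell=\operatorname{Fix}_L$ exactly when $m(E_\varphi)>0$ and either $1\le p<2$, $r\in(p,2]$, or $p\ge2$, $r=2$; otherwise $r$ is never fixed. This yields the displayed case formula. (The case $p=1<r\le 2$ cannot occur when $q<p$, and the conditions written as "$1<p<2$" cover exactly the meaningful range.) The strict inclusion $\mathcal C_0\subsetneq\CB$ needs a witness, for example $\varphi(z)=z$: it is bounded since $q<p$ and has $m(E_\varphi)=1$.

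\emph{Case (ii), $p<q$.} Theorem~A says every bounded $C_\varphi$ has $m(E_\varphi)=0$ and empty exponent sets. Hence $\CS=\CSell{r}=\CSL{r}=\mathcal C_0=\CB$. For $\CK\subsetneq\CS$ we need a bounded but noncompact $C_\varphi:H^p\to H^q$. I would take a lens-type or Hunziker--Jarchow-type map touching $\T$ at a single point, with contact tuned so that the pullback measure is a $(q/p)$-Carleson measure but not a vanishing one. I expect this example to be the main obstacle, since it is the only non-formal input. The first thing I would try is $\varphi(z)=1-c(1-z)^{\alpha}$ with $\alpha=p/q$ (suitably normalized), computing $m(\{\zeta:\varphi^*(\zeta)\in S(I)\})\asymp m(I)^{q/p}$ by hand. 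This is presumably the standard example from the boundedness/compactness literature \cite{ContrerasHernandezDiaz2003,BlascoJarchow2005}.

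\emph{Case (iii), $p=q$.} Theorem~C gives $\CK=\CS$. Noncompactness together with $m(E_\varphi)=0$ (e.g.\ a lens map, or $\varphi(z)=(1+z)/2$) shows $\CK\subsetneq\mathcal C_0$. The identity shows $\mathcal C_0\subsetneq\CB$. For $\CSell{r}$, go through the table row by row:
\begin{itemize}
\item $r=p$ is fixed exactly when $C_\varphi$ is noncompact (this includes $p=2$, and $p=1$ via $[1,2]$), giving $\CK$.
\item For $p<r\le2$, or $r=2<p$, $r$ is fixed exactly when $m(E_\varphi)>0$, giving $\mathcal C_0$.
\item Every other $r$ is never fixed, giving $\CB$.
\end{itemize}
For $\CSL{r}$ the reading is analogous. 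For $p=1$, $r\in(1,2]$ is fixed iff $m(E_\varphi)>0$, while $r=1$ is never fixed. For $1<p<2$ the set is $[p,2]$, so it includes $r=p$ when $m(E_\varphi)>0$, and noncompactness with $m(E_\varphi)=0$ gives $\operatorname{Fix}_L=\varnothing$. Hence the class is $\mathcal C_0$. For $p=2$, the exponent $2$ is fixed iff $C_\varphi$ is noncompact, giving $\CK$. For $p>2$, $r\in\{2,p\}$ gives $\mathcal C_0$. All remaining cases give $\CB$.
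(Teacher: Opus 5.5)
Your reduction is the paper's. Every class identity is read off the tables of Theorems~A and~C via $r\notin\operatorname{Fix}_\ell(C_\varphi)$ and $r\notin\operatorname{Fix}_L(C_\varphi)$. Your case-by-case readings are all correct, including $r=p$, the endpoint $p=1$, and $p=2$. The only non-formal inputs are the witnesses for the strict inclusions, and one of your suggestions there is wrong.

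\textbf{The lens map in (iii).} A lens map cannot witness $\CK(H^p,H^p)\subsetneq\mathcal C_0(H^p)$, because $C_{\varphi_\alpha}$ is compact on $H^p$ for every $0<\alpha<1$. Indeed, Proposition~\ref{prop:lens-example} makes $C_{\varphi_\alpha}:H^p\to H^{p/\alpha}$ bounded, and Corollary~\ref{cor:boundary-obstruction} then forces $C_{\varphi_\alpha}:H^p\to H^p$ to be compact. Directly: $(\mu_{\varphi_\alpha})_\D(S(I))\lesssim|I|^{1/\alpha}=o(|I|)$. The same corollary shows that no witness for (ii) can ever serve in (iii). You must use $\psi(z)=(1+z)/2$, as the paper does, and its noncompactness needs an argument. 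For example, $(\mu_\psi)_\D(S(1,h))=\frac{2}{\pi}\arcsin h\asymp h$, which violates the vanishing condition in Proposition~\ref{prop:carleson}(ii); alternatively, note that $\psi$ has a finite angular derivative at $1$.

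\textbf{Your example in (ii).} The map $\varphi(z)=1-c(1-z)^{\alpha}$ with $\alpha=p/q$ is a valid substitute for the paper's critical lens map $\varphi_{p/q}$. You need $c>0$ small, e.g.\ $c\,2^{\alpha}\le 2\cos(\alpha\pi/2)$, so that $\varphi(\D)\subset\D$ and $E_\varphi=\{1\}$.

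What you need is not $\asymp$ for all arcs. You need an upper bound $(\mu_\varphi)_\D(S(I))\lesssim|I|^{q/p}$ uniformly in $I$, plus a matching lower bound for arcs centred at $1$. The upper bound holds because $1-\varphi^*(e^{it})=c(1-e^{it})^{\alpha}$ stays in a Stolz angle, so $1-|\varphi^*(e^{it})|\asymp|t|^{\alpha}$ near $t=0$, while $|\varphi^*|$ stays bounded away from $1$ elsewhere. Proposition~\ref{prop:carleson}(i) then gives both boundedness and noncompactness.

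The paper instead imports the estimate $\rho_{\varphi_\alpha}(h)\asymp h^{1/\alpha}$ for lens maps from Lef\`evre--Li--Queff\'elec--Rodr\'iguez-Piazza and compares pseudo-Carleson windows with the boxes $S(I)$. Your symbol makes this step an elementary, self-contained computation.
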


The class identities in Corollary~D follow from Theorems~A and~C.
Read together with the classical boundedness and compactness criteria
in Proposition~\ref{prop:classical}, these identities give concrete
characterizations of \(\ell^r\)-singularity and
\(L^r(0,1)\)-singularity for bounded composition operators
\(C_\varphi:H^p\to H^q\), for all \(1\le p,q,r<\infty\).
The strict inclusions in Corollary~D are proved in
Section~\ref{sec:lens}. In particular, when \(p<q\), critical lens
maps give bounded noncompact composition operators; hence
\(\CS(H^p,H^q)=\CB(H^p,H^q)\) means that every bounded composition
operator is strictly singular, not that every such operator is compact.

\subsection*{Comparison with the diagonal theory}

The fixed-copy exponent sets formulate a broader rigidity problem:
for each bounded composition operator, determine all exponents
\(r\) for which copies of \(\ell^r\) or \(L^r(0,1)\) are fixed.
While \cite{LNST2017} determines when composition operators
\(C_\varphi:H^p\to H^p\) fix copies of \(\ell^p\), \(\ell^2\),
and, for \(1<p<\infty\), \(p\ne2\), \(L^p(0,1)\),
Theorems~A and~C determine both fixed-copy exponent sets for every
bounded \(C_\varphi:H^p\to H^q\). Thus the extension is twofold:
from the specific exponents \(p\) and \(2\) to the full range of
fixed-copy exponents, and from the diagonal case to composition
operators between different Hardy spaces. The proofs also require
new mechanisms. A central new ingredient is Theorem~B, a localization
theorem independent of composition operators. Its proof combines stable integrals, localization on
prescribed measurable sets of positive measure, conjugation symmetry,
and analytic lifting through the Riesz projection to produce
localized analytic copies of \(L^r(0,1)\). For \(p<2\), it converts boundary lower estimates into fixed copies
of \(L^r(0,1)\) for every \(p<r\le2\), both in the diagonal case
and when \(q<p\). In the diagonal classification, these new fixed
copies are combined with the endpoint results of \cite{LNST2017},
while classical Banach space structure results exclude the remaining
exponents. For \(p<q\),
compactness after raising the source exponent is combined with
Kwapie\'n--Pe{\l}czy\'nski multiplier factorizations when \(p<2\),
with a Hilbert space argument when \(p=2\), and with
Kadec--Pe{\l}czy\'nski subspace arguments when \(p>2\), to prove
that every bounded \(C_\varphi:H^p\to H^q\) is strictly singular.

\subsection*{Outline of the proof of Theorem~B}

We treat \(r=2\) and \(p<r<2\) separately. For \(r=2\), an
\(E\)-adapted dyadic lacunary tail gives a copy of
\(\ell^2\simeq L^2(0,1)\) with the required lower estimates on \(E\).
For \(p<r<2\), an averaging argument gives a rotation
\(\widetilde E\) of \(E\) and a conjugation-invariant set
\(G\subset\widetilde E\) with \(m(G)\ge m(E)^2\). A
measure-preserving parametrization transfers the real \(r\)-stable
model arising from Kanter's integral to one half of \(G\), and
reflection extends it to the other half. The \(L^1\)-, \(L^p\)-,
and \(L^t\)-moment estimates, for some \(p<t<r\), provide
respectively the localized lower bound, the lower \(H^p\) control,
and the upper control needed for analytic lifting through the Riesz
projection. Conjugation symmetry then yields a complex linear copy
without losing the localized lower estimate, and a final rotation
returns the construction to \(E\).

\subsection*{Outline of the proofs of Theorems~A and~C}

\noindent\textit{The case \(q<p\).}
If \(m(E_\varphi)=0\), then \(C_\varphi:H^p\to H^q\) is compact.
If \(m(E_\varphi)>0\), the boundary part of the pullback measure
yields a measurable set \(E\subset\T\), \(m(E)>0\), such that
\(\|C_\varphi f\|_{H^q}
\gtrsim
\|f\|_{L^q(E)}.
\)
For \(p<2\), Theorem~B gives fixed copies of \(L^r(0,1)\), and hence
of \(\ell^r\), for every \(p<r\le2\); for \(p\ge2\), the fixed
dyadic lacunary space \(M_2\) gives \(r=2\).
Proposition~\ref{prop:ambient-ellr} excludes all other sequence
exponents except possibly \(r=p\) when \(p<2\), and Rosenthal's
strong-embedding theorem excludes this remaining endpoint.

\smallskip
\noindent\textit{The case \(p<q\).}
Boundedness implies \(m(E_\varphi)=0\) and compactness of
\(C_\varphi:H^p\to H^p\) and \(C_\varphi:H^u\to H^q\) for every
\(u>p\). If \(p<2\), assuming that \(C_\varphi\) is bounded below on
an infinite-dimensional subspace \(M\subset H^p\), the hypotheses of
the Kwapie\'n--Pe\l czy\'nski factorization are satisfied
(\(M\) is reflexive when \(p=1\), while for \(1<p<2\) it contains no
copy of \(\ell^p\)). Thus the inclusion \(M\hookrightarrow H^p\)
factors through some \(H^u\), \(u>p\), contradicting the compactness
of \(C_\varphi:H^u\to H^q\).
If \(p\ge2\), the Kadec--Pe\l czy\'nski subspace theorem reduces the
problem to a fixed copy of \(\ell^2\); the norm comparison in
Lemma~\ref{lem:kp-hilbert}, together with the compactness of
\(C_\varphi:H^p\to H^p\), again gives a contradiction.

\smallskip
\noindent\textit{The diagonal case \(p=q\).}
When \(m(E_\varphi)>0\), the known endpoint results, supplemented
by Theorem~B for \(p<2\), give the asserted fixed-copy ranges; the
only exceptional endpoint is \(p=1\), where \(r=1\) occurs only on
the sequence side because \(L^1(0,1)\) does not embed isomorphically
into \(H^1\). When \(m(E_\varphi)=0\) and \(C_\varphi\) is noncompact,
the multiplier factorizations exclude the additional exponents for
\(p<2\), while Proposition~\ref{prop:ambient-ellr} and the endpoint
results settle \(p>2\). At \(p=2\), noncompactness gives
\(\operatorname{Fix}_{\ell}(C_\varphi)
=\operatorname{Fix}_{L}(C_\varphi)=\{2\}\).

\subsection*{Organization of the paper}

%The paper is organized as follows. 
Section~\ref{sec:prelim}
collects the Banach space and Hardy space tools used later.
Section~\ref{sec:pullback} develops the framework of pullback measures,
records the classical boundedness and compactness criteria, and derives
the compactness consequences needed in the case \(p<q\). Section~\ref{sec:localization} constructs
localized analytic copies on sets of positive measure and proves
Theorem~B. Sections~\ref{sec:q-lt-p} and~\ref{sec:p-lt-q}
treat \(q<p\) and \(p<q\), respectively, and prove the corresponding
parts of Theorem~A. Section~\ref{sec:diag} proves Theorem~C.
Finally, Section~\ref{sec:lens} completes the proof of
Corollary~D and proves its strict inclusions.

Throughout, \(A\lesssim B\) means that \(A\le CB\), where the implicit constant \(C\) may depend on fixed parameters but is independent of the varying quantities under consideration.  We write
\(A\asymp B\) if both \(A\lesssim B\) and \(B\lesssim A\).

%%%%%%%%%%%%%%%%%%%%%%%%%%%%%%%%%%%%%%%%%%%%%%%%%%

\section{Banach and Hardy spaces}
\label{sec:prelim}

We collect three tools used below: restrictions on
\(\ell^r\)-subspaces of Hardy spaces, the multiplier factorizations
of Kwapie\'n and Pe{\l}czy\'nski, and the classical lacunary series
estimates.

\subsection{\texorpdfstring{Subspaces of Hardy and \(L^p\)-spaces} 
{Subspaces of Hardy and Lp-spaces}}
%We first recall two standard facts concerning classical sequence spaces and subspaces of \(L^p\).

\begin{lemma}
\label{lem:ellr-rigidity}
Let \(1\le r,s<\infty\). If \(\ell^r\) contains a closed subspace
isomorphic to \(\ell^s\), then \(r=s\).
\end{lemma}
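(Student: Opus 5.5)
The plan is to use the classical Pitt-type principle. If \(T:\ell^s\to\ell^r\) is bounded and linear with \(s>r\), then \(T\) is compact. We then take \(T\) to be an isomorphic embedding \(J:\ell^s\to\ell^r\) and argue by cases on the order of \(r\) and \(s\).

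\emph{Case \(s>r\).} Assume there is an embedding \(J:\ell^s\to\ell^r\). Pitt's theorem makes \(J\) compact. A compact operator that is bounded below on an infinite-dimensional space would send the unit ball onto a set whose closure is compact and which contains the image of a noncompact bounded set. More precisely, \(J\) bounded below means \(J^{-1}\) on the range is bounded, so the unit ball of \(\ell^s\) would be relatively compact. That is impossible in infinite dimensions. To keep things self-contained one can argue directly instead. The unit vectors \(e_n\) of \(\ell^s\) are weakly null for \(s>1\), so \(Je_n\) is weakly null in \(\ell^r\). A gliding-hump (Bessaga--Pełczyński) argument passes to a subsequence with \(Je_{n_k}\) a small perturbation of a disjointly supported block sequence \(u_k\) with \(\|u_k\|_r\asymp1\). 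Then
\[
\Bigl\|\sum a_k u_k\Bigr\|_r=\Bigl(\sum|a_k|^r\|u_k\|_r^r\Bigr)^{1/r}\asymp\|(a_k)\|_r .
\]
This forces \(\|(a_k)\|_r\lesssim\|(a_k)\|_s\) for all finite sequences, which fails for \(a_k=1\), \(k\le N\), because \(N^{1/r}/N^{1/s}\to\infty\).

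\emph{Case \(s<r\).} If \(s>1\), the same gliding-hump argument applies. The image of \((e_n)\) has a subsequence equivalent to the unit vector basis of \(\ell^r\), while as the image of a subsequence of \((e_n)\) under an isomorphism it is also equivalent to the \(\ell^s\) basis. This is a contradiction, since the \(\ell^r\) and \(\ell^s\) bases are not equivalent (compare \(N^{1/r}\) with \(N^{1/s}\)).

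\emph{Subcase \(s=1<r\).} Here \((e_n)\) in \(\ell^1\) is not weakly null, so a different argument is needed. Since \(\ell^r\) is reflexive for \(r>1\), every closed subspace is reflexive, while \(\ell^1\) is not. Hence \(\ell^1\) cannot embed.

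The gliding-hump perturbation is the main technical step. It is standard: by weak nullity each coordinate of \(Je_n\) tends to \(0\), so we choose \(n_k\) and finite intervals inductively so that the tails are small, and then apply the principle of small perturbations for basic sequences. Alternatively, one can simply cite Pitt's theorem and the standard fact that \(\ell^r\) and \(\ell^s\) are totally incomparable for \(r\ne s\) (Albiac--Kalton, Corollary 2.1.6). I would state the argument briefly and cite that reference.
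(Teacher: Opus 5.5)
Your proposal is correct and essentially the same as the paper's proof, which is simply a citation of the total incomparability of $\ell^r$ and $\ell^s$ for $r\ne s$ (Albiac--Kalton, Corollary~2.1.6), the same reference you end up invoking. Your self-contained sketch is a valid unpacking of that corollary in the case needed: for $s>1$, the image of the weakly null unit vector basis has a subsequence equivalent to the $\ell^r$ basis, which the $N^{1/r}$ versus $N^{1/s}$ comparison rules out; for $s=1$, the reflexivity argument applies.
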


\begin{proof}
If \(r\ne s\), no infinite-dimensional subspace of \(\ell^r\) is
isomorphic to a subspace of \(\ell^s\); see
Albiac--Kalton~\cite[Corollary~2.1.6]{AlbiacKalton2016}.
\end{proof}

\begin{lemma}
\label{lem:kp-subspace}
Let \(2<p<\infty\), and let \(Y\) be an infinite-dimensional closed
subspace of \(L^p(\T)\). Then \(Y\) contains a closed subspace
isomorphic either to \(\ell^2\) or to \(\ell^p\).
\end{lemma}

\begin{proof}
This is the Kadec--Pe{\l}czy\'nski subspace theorem; see
\cite[Corollary~2, p.~168]{KadecPelczynski1962}.
\end{proof}

We shall also use the following restriction on copies of \(\ell^r\)
in Hardy spaces.

\begin{proposition}
\label{prop:ambient-ellr}
Let \(1\le r,s<\infty\). If \(H^s\) contains a closed subspace
isomorphic to \(\ell^r\), then
\[
        s\le r\le2 \quad (1\le s\le2),
        \qquad
        r\in\{2,s\} \quad (2<s<\infty).
\]
\end{proposition}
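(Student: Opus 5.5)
The plan is to reduce everything to known facts about subspaces of \(L^s(\T)\). For \(1<s<\infty\), the Riesz projection makes \(H^s\) isomorphic to a complemented subspace of \(L^s(\T)\); in fact \(H^s\simeq L^s\) (Boas), but we only need that \(H^s\) embeds into \(L^s(0,1)\). For \(s=1\), \(H^1\) is a closed subspace of \(L^1(\T)\). So in all cases a copy of \(\ell^r\) in \(H^s\) gives a copy of \(\ell^r\) in \(L^s(0,1)\). The proposition then reduces to the classical description of the \(\ell^r\) that embed into \(L^s\), together with one extra input for the case \(s=1\).

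First take \(1\le s\le2\). The lower bound \(r\ge s\) comes from type. \(L^s\) has type \(s\) (Rademacher type \(\min(s,2)\)), and type passes to subspaces. But \(\ell^r\) with \(r<s\) has no type larger than \(r\): the unit vectors satisfy \(\mathbb E\|\sum\varepsilon_je_j\|=n^{1/r}\), which is not \(O(n^{1/s})\). Hence \(r\ge s\). For \(s=1\) this gives nothing, and the constraint there is \(1\le r\le2\). The upper bound \(r\le2\) comes from cotype: \(L^s\) has cotype \(2\) for \(s\le2\), while \(\ell^r\) with \(r>2\) has no cotype \(2\), since \(n^{1/r}\) is not \(\gtrsim n^{1/2}\). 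So \(s\le r\le2\).

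Now take \(2<s<\infty\). Lemma~\ref{lem:kp-subspace} applied to \(Y\simeq\ell^r\) shows that \(\ell^r\) contains a subspace isomorphic to \(\ell^2\) or to \(\ell^s\). Every infinite-dimensional closed subspace of \(\ell^r\) contains a further copy of \(\ell^r\) (Bessaga--Pełczyński). Hence \(\ell^2\) or \(\ell^s\) contains a copy of \(\ell^r\). Lemma~\ref{lem:ellr-rigidity} then gives \(r=2\) or \(r=s\). If one prefers to avoid the subspace-of-subspace step, apply Lemma~\ref{lem:ellr-rigidity} directly to ``\(\ell^r\) contains \(\ell^2\) or \(\ell^s\)'', which forces \(r\in\{2,s\}\).

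The step I expect to need the most care is pinning down the precise type and cotype facts, including the endpoint \(s=1\), where \(L^1\) has cotype \(2\) but only trivial type. This is consistent with the claim, since for \(s=1\) the assertion is just \(1\le r\le2\). I would cite Albiac--Kalton for the type and cotype of \(L^s\) and \(\ell^r\). Alternatively, I would cite directly the classical theorem that \(\ell^r\) embeds in \(L^s\) for \(1\le s\le2\) only when \(s\le r\le2\).
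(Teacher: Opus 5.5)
Your proof is correct and follows the paper's route. The paper likewise identifies \(H^s\) isometrically with a closed subspace of \(L^s(\T)\) via boundary values, which works for every \(s\ge1\), so the Riesz projection and Boas's theorem are not needed. It then invokes the classical classification of the \(\ell^r\) that embed into \(L^s\), citing Albiac--Kalton, Theorem~6.4.19.

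Your type/cotype argument for \(s\le2\), and the Kadec--Pe{\l}czy\'nski argument combined with Lemma~\ref{lem:ellr-rigidity} for \(s>2\), simply unpack that citation. In the second argument you can apply the lemma directly; the Bessaga--Pe{\l}czy\'nski detour is unnecessary.
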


\begin{proof}
The boundary-value map identifies \(H^s\) isometrically with a closed
subspace of \(L^s(\T)\). Thus every copy of \(\ell^r\) in \(H^s\)
is also a copy of \(\ell^r\) in \(L^s(\T)\). The conclusion follows
from the characterization of those \(r\) for which \(\ell^r\)
embeds into \(L^s\); see
Albiac--Kalton~\cite[Theorem~6.4.19]{AlbiacKalton2016}.
\end{proof}

We shall also use the following complex form of the
Kadec--Pe{\l}czy\'nski theorem.

\begin{lemma}
\label{lem:kp-hilbert}
Let \(2<q<\infty\), and let \(N\) be a closed complex subspace of
\(L^q(\T)\). If \(N\simeq\ell^2\), then the \(L^q\)- and
\(L^2\)-norms are equivalent on \(N\). In particular, there is a
constant \(A_N>0\) such that
\[
        \|g\|_{L^q(\T)}
        \le
        A_N\|g\|_{L^2(\T)},
        \qquad
        g\in N.
\]
\end{lemma}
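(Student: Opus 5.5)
The approach is to reduce to the classical real Kadec--Pe{\l}czy\'nski theorem by passing to the underlying real space, and then to return to the complex setting.

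\textbf{Step 1 (realification).} View \(L^q(\T)\) as a real Banach space; it is real-isometric to \(L^q(\T;\R)\oplus_q L^q(\T;\R)\) via \(g\mapsto(\operatorname{Re}g,\operatorname{Im}g)\). This space is in turn real-isometric to \(L^q\) of a doubled measure space, that is, \(L^q(\mu;\R)\) with \(\mu\) the measure \(m\) on two disjoint copies of \(\T\). Under this identification the \(L^q\)- and \(L^2\)-norms are preserved up to fixed constants: on the doubled space the \(L^2\)-norm is exactly \(\|g\|_{L^2(\T)}\), and the \(L^q\)-norm is comparable to \(\|g\|_{L^q(\T)}\) because \(|a|^q+|b|^q\asymp (a^2+b^2)^{q/2}\). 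A complex isomorph of \(\ell^2\) is, as a real space, isomorphic to the real Hilbert space \(\ell^2(\R)\). Hence \(N\), regarded as a real subspace \(N_\R\subset L^q(\mu;\R)\), is isomorphic to a real Hilbert space.

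\textbf{Step 2 (Kadec--Pe{\l}czy\'nski dichotomy).} For a closed subspace \(Y\subset L^q(\mu;\R)\) with \(q>2\) and \(\mu\) finite, the classical dichotomy states the following. Either \(\|\cdot\|_{L^q}\asymp\|\cdot\|_{L^2}\) on \(Y\), or \(Y\) contains a normalized sequence that is a small perturbation of disjointly supported functions. In the second case that sequence spans a subspace isomorphic to \(\ell^q\) (see \cite{KadecPelczynski1962}). Apply this to \(Y=N_\R\). If the norms were not equivalent, \(N_\R\) would contain a copy of \(\ell^q\). But every subspace of a Hilbert space is Hilbert, so this would force \(\ell^q\simeq\ell^2\), which Lemma~\ref{lem:ellr-rigidity} (in its real version) rules out since \(q\ne2\). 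Therefore there is a constant \(A\) with \(\|g\|_{L^q(\mu)}\le A\|g\|_{L^2(\mu)}\) on \(N_\R\).

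\textbf{Step 3 (return to \(\T\)).} Translate back using the norm comparisons from Step 1. This gives \(\|g\|_{L^q(\T)}\le A_N\|g\|_{L^2(\T)}\) for all \(g\in N\). The reverse inequality \(\|g\|_{L^2}\le\|g\|_{L^q}\) is automatic because \(m\) is a probability measure, so the two norms are equivalent on \(N\).

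The only delicate point is Step 2: we must make sure that the version of the Kadec--Pe{\l}czy\'nski alternative being quoted is the \emph{norm-equivalence} dichotomy and not merely the subspace statement of Lemma~\ref{lem:kp-subspace}. If only the latter were available, it would not suffice, since a subspace isomorphic to \(\ell^2\) could a priori carry non-equivalent norms. So I would cite the original theorem in \cite{KadecPelczynski1962}. Its proof uses the sets \(M_\varepsilon=\{f:\ m(|f|\ge\varepsilon\|f\|_q)\ge\varepsilon\}\): either \(Y\subset M_\varepsilon\) for some \(\varepsilon\), which yields the norm equivalence, or one extracts almost disjoint elements.
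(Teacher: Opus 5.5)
Your proposal is correct and takes essentially the same route as the paper. You realify \(N\) via \(g\mapsto(\operatorname{Re}g,\operatorname{Im}g)\) into \(L^q\) of two disjoint copies of \(\T\), where the \(L^2\)-norm is preserved and the \(L^q\)-norm changes only by fixed constants, and then apply the real Kadec--Pe{\l}czy\'nski theorem. The only difference is that the paper cites the implication \textup{3a}\(\Rightarrow\)\textup{3e} of \cite[Theorem~3]{KadecPelczynski1962} directly, whereas you rederive it from the \(M_\varepsilon\) dichotomy together with the fact that \(\ell^q\not\simeq\ell^2\) for \(q\ne2\).
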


\begin{proof}
For real subspaces, this is the implication
\textup{3a}\(\Rightarrow\)\textup{3e} in
\cite[Theorem~3, pp.~166--167]{KadecPelczynski1962},
after identifying \((\T,m)\) with the unit interval modulo null sets.
The complex case follows by applying the real result to the
realification of \(N\), realized by
\(f\mapsto(\Re f,\Im f)\) on two disjoint copies of \(\T\).
With the sum measure on the two copies, the \(L^2\)-norm is preserved
and the \(L^q\)-norm is equivalent to the original \(L^q\)-norm on
\(N\).
\end{proof}

\subsection{Multiplier factorizations}

The two factorizations below are due to
Kwapie\'n and Pe{\l}czy\'nski.

\begin{lemma}
\label{lem:kp-below2}
Let \(1<s<2\), and let \(X\) be a closed subspace of \(H^s\)
containing no closed subspace isomorphic to \(\ell^s\). Then there
exist \(t\in(s,2)\) and an outer function \(g\in H^a\), where \(a=st/(t-s)\), such that
\[
        U:X\to H^t,\quad Uf=\frac{f}{g},
        \qquad
        V:H^t\to H^s,\quad Vh=gh,
\]
are bounded and satisfy \(VU=j_X\), where
\(j_X:X\hookrightarrow H^s\) denotes the inclusion.
\end{lemma}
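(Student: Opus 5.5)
This lemma is essentially the Kwapie\'n--Pe{\l}czy\'nski factorization theorem, so the plan is to reduce it to their result in \(L^s\) and then transfer it to Hardy spaces by taking outer parts. The first step is the Banach-space input. Let \(1<s<2\) and let \(X\subset H^s\subset L^s(\T)\) contain no copy of \(\ell^s\). By the Maurey--Rosenthal--Kwapie\'n--Pe{\l}czy\'nski circle of results, such an \(X\) satisfies a lower \(L^t\)-type estimate for some \(t\in(s,2)\). Concretely, Maurey factorization (equivalently, Rosenthal's theorem on subspaces of \(L^s\) not containing \(\ell^s\)) gives a density \(w\ge0\) with \(\int w\,dm=1\) such that
\[
\|f w^{-1/s}\|_{L^t(w\,dm)}\lesssim\|f\|_{L^s},\qquad f\in X.
\]
Rewriting this estimate with \(h=w^{1/t-1/s}\), and noting that \(\tfrac1s-\tfrac1t=\tfrac1a\), it says that \(f/h\in L^t\) for \(f\in X\), with a bounded map, where \(h\ge0\) satisfies \(\int h^{a}\,dm<\infty\). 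I would cite this as the Kwapie\'n--Pe{\l}czy\'nski statement \cite{KwapienPelczynski1976} rather than reprove it.

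The second step is analytic lifting. Replace \(h\) by \(\max(h,\varepsilon)\) if needed; this only decreases \(f/h\) pointwise and keeps \(h\) in \(L^a\). Then \(\log h\in L^1\), so we may form the outer function
\[
g=\exp\Bigl(\int\frac{\zeta+z}{\zeta-z}\log h(\zeta)\,dm(\zeta)\Bigr).
\]
It satisfies \(|g^*|=h\) a.e. and \(g\in H^a\). For \(f\in X\subset H^s\), the quotient \(f/g\) is analytic, since \(g\) has no zeros. It lies in the Smirnov class \(N^+\), because \(g\) is outer. Its boundary values have modulus \(|f|/h\in L^t\). By Smirnov's theorem, \(f/g\in H^t\) with \(\|f/g\|_{H^t}=\|f/h\|_{L^t}\lesssim\|f\|_{H^s}\). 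This proves that \(U\) is bounded.

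The third step handles \(V\). For \(h\in H^t\), H\"older's inequality with \(\tfrac1s=\tfrac1t+\tfrac1a\) gives \(gh\in H^s\) and \(\|gh\|_{H^s}\le\|g\|_{H^a}\|h\|_{H^t}\). Finally, \(VUf=g\cdot f/g=f\) is immediate.

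The main obstacle is the first step. One has to make sure the cited factorization yields a genuine exponent \(t>s\) with \(t<2\), and a weight whose power \(h\) lies in \(L^a\) with \(a=st/(t-s)\) exactly. If the reference only gives the real \(L^s\) version, I would apply it to \(X\) viewed inside complex \(L^s(\T)\). The complex case follows because absence of \(\ell^s\) is preserved under realification, by the same device as in Lemma~\ref{lem:kp-hilbert}.
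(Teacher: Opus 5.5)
Your argument is correct and is essentially the proof behind Kwapie\'n--Pe{\l}czy\'nski's Proposition~2.2a. The paper simply cites that proposition, adding only that an outer \(g\) is zero-free, so \(f/g\) is analytic. You cite just the \(L^s\) change-of-density step and then carry out the outer-function/Smirnov lifting and the H\"older bound for \(V\) yourself.

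There is one sign slip to fix. From \(\int|f|^t w^{1-t/s}\,dm\lesssim\|f\|_{L^s}^t\), the correct choice is \(h=w^{1/s-1/t}=w^{1/a}\), so that \(h^a=w\) and \(\int h^a\,dm=1\); your \(w^{1/t-1/s}\) is the reciprocal. The obstacle you flag is harmless: \(w\,dm\) is a probability measure, so you may lower the Maurey--Rosenthal exponent to any \(t\in(s,2)\) below it without losing the estimate.
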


\begin{proof}
This is precisely the multiplier factorization in
\cite[Proposition~2.2a, p.~267]{KwapienPelczynski1976},
applied with \(p_0=s\). That proposition gives \(t_0\in(s,2)\)
such that, for every \(t\in(s,t_0)\), there is an outer function
\(g\in H^{st/(t-s)}
\)
for which the inclusion \(j_X:X\hookrightarrow H^s\) factors as
\[
        j_X=M_gM_{1/g},
\]
where
\[
        M_{1/g}:X\to H^t,
        \qquad
        M_g:H^t\to H^s
\]
are bounded. Choose \(t\in(s,t_0)\), and set
\(U=M_{1/g}\) and \(V=M_g\). Since \(g\) is outer, it has no zeros
in \(\D\), so \(Uf=f/g\) is analytic. Thus \(U\) and \(V\) have the
stated forms and satisfy \(VU=j_X\).
\end{proof}

The \(H^1\) case is the multiplier factorization constructed in the
proof of Kwapie\'n--Pe{\l}czy\'nski's Proposition~2.2.

\begin{lemma}
\label{lem:kp-H1}
Let \(X\) be a reflexive closed subspace of \(H^1\). Then there exist
\(t>1\), an outer function \(g\in H^a\), where
\(a=t/(t-1)\), and bounded complex linear operators
\[
        U:X\to H^t,\quad Uf=\frac{f}{g},
        \qquad
        V:H^t\to H^1,\quad Vh=gh,
\]
such that
\[
        g^{-1}\in H^\infty,
        \qquad
        \|g^{-1}\|_{H^\infty}\le1,
        \qquad
        VU=j_X,
\]
where \(j_X:X\hookrightarrow H^1\) denotes the inclusion.
\end{lemma}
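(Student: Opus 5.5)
Since the statement is attributed to the proof of Kwapie\'n--Pe{\l}czy\'nski's Proposition~2.2, the plan is to reconstruct their argument. It rests on two ingredients: the Maurey--Rosenthal factorization through a change of density, and an analytic lifting of that density via an outer function.

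\emph{Step 1: find a good density.} Since \(X\) is reflexive, \(X\) contains no copy of \(\ell^1\). View \(X\subset H^1\subset L^1(\T)\). By Rosenthal's theorem on reflexive subspaces of \(L^1\), \(X\) embeds into some \(L^t\) after a change of density, for some \(t>1\). Concretely, there is a density \(w\ge0\) with \(\int_\T w\,dm=1\) and \(w>0\) a.e., and a constant \(C\), such that
\[
\Bigl(\int_\T |f|^t w^{1-t}\,dm\Bigr)^{1/t}\le C\|f\|_{H^1},\qquad f\in X .
\]
One route is the Nikishin--Maurey factorization: \(X\) has type \(t'>1\) by reflexivity and Rosenthal, which yields a weak-\(L^{t}\) estimate. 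Lowering \(t\) slightly then upgrades this to a strong estimate. This is the step I expect to be the main obstacle; I would quote it from Rosenthal's 1973 theorem rather than reprove it.

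\emph{Step 2: lift the density to an outer function.} Replace \(w\) by \(\max(w,1)\), renormalized. Raising \(w\) only decreases \(w^{1-t}\), so the inequality survives, and we may assume \(w\ge c>0\). Let \(g\) be the outer function with \(|g^*|=w^{(t-1)/t}\cdot\mathrm{const}\). Then \(|f/g|^t=|f|^t w^{1-t}\) up to constants, so \(U f=f/g\) lies in the Nevanlinna class with boundary values in \(L^t\). Since \(1/g\) is outer with bounded modulus, \(f/g\in H^t\) by Smirnov's theorem. Moreover \(|g^*|\ge c'\) gives \(g^{-1}\in H^\infty\), and after rescaling \(g\) (and the constant \(C\)) we get \(\|g^{-1}\|_{H^\infty}\le1\).

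\emph{Step 3: bound \(g\) and \(V\).} We have \(|g^*|^{a}=w^{(t-1)a/t}=w\) with \(a=t/(t-1)\), so \(g\in H^a\) (again by Smirnov, since \(g\) is outer with \(L^a\) boundary values). By H\"older with exponents \(a\) and \(t\), \(\|gh\|_{H^1}\le\|g\|_{H^a}\|h\|_{H^t}\), so \(V\) is bounded. Finally, \(VU f=f\) is immediate, and both maps are complex linear.
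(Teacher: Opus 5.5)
Your proposal is correct and follows essentially the same route as the paper. The paper simply invokes the construction in the proof of Kwapie\'n--Pe{\l}czy\'nski's Proposition~2.2, which supplies an outer \(g\in H^{t/(t-1)}\) with \(|g|\ge1\) on \(\D\) and bounded multipliers \(M_{1/g}\) and \(M_g\), and then reads off \(\|g^{-1}\|_{H^\infty}\le1\). You unpack that cited construction yourself: Rosenthal's change-of-density theorem, the truncation \(\max(w,1)\) of the density from below, and lifting \(w^{(t-1)/t}\) to an outer function, with Smirnov's theorem giving \(f/g\in H^t\) and \(g\in H^a\).
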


\begin{proof}
The proof of
\cite[Proposition~2.2, pp.~265--266]{KwapienPelczynski1976}
gives \(t_1>1\) such that, for every \(t\in(1,t_1)\), one may choose
an outer function \(g\in H^{t/(t-1)}\) satisfying
\[
        |g(z)|\ge1,
        \qquad z\in\D,
\]
for which the multiplication operators
\(M_{1/g}:X\to H^t\) and \(M_g:H^t\to H^1\) are bounded and satisfy
\(M_gM_{1/g}=j_X\).

Choose \(t\in(1,t_1)\), and set
\(a=t/(t-1)\), \(U=M_{1/g}\), and \(V=M_g\).
Since \(|g(z)|\ge1\) on \(\D\), the function \(g\) is zero-free and
\[
        g^{-1}\in H^\infty,
        \qquad
        \|g^{-1}\|_{H^\infty}\le1.
\]
Thus \(U\) and \(V\) have the stated forms and are complex linear, and
\(VU=j_X\).
\end{proof}

\subsection{Lacunary estimates}

We use the classical Paley--Zygmund theorem for lacunary series.  A sequence of
positive integers \((n_k)\) is called lacunary if
\(\inf_k n_{k+1}/n_k>1\).

\begin{lemma}
\label{lem:paley}
Let \(1\le s<\infty\), and let \((n_k)\) be a sequence of positive
integers satisfying \(n_{k+1}\ge\lambda n_k\) for some \(\lambda>1\)
and all \(k\ge1\).  Then there exist constants
\(A_{s,\lambda},B_{s,\lambda}>0\) such that, for every finitely
supported scalar sequence \((a_k)\),
\[
        A_{s,\lambda}
        \left(\sum_k|a_k|^2\right)^{1/2}
        \le
        \left\|\sum_k a_kz^{n_k}\right\|_{H^s}
        \le
        B_{s,\lambda}
        \left(\sum_k|a_k|^2\right)^{1/2}.
\]
\end{lemma}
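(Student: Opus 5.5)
This is the classical Paley--Zygmund inequality for lacunary series. It can be quoted from Zygmund's \emph{Trigonometric Series} (Chapter~V, Theorem~8.20) or Duren's \emph{Theory of \(H^p\) Spaces}. For completeness, here is how I would prove it directly. Set \(F=\sum_k a_kz^{n_k}\) and normalize \(\sum_k|a_k|^2=1\). The anchor is \(s=2\), where Parseval gives \(\|F\|_{H^2}=1\). The rest reduces to comparing all \(L^s\)-norms on the finite-dimensional span, with constants depending only on \(s\) and \(\lambda\).

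\emph{Upper bound.} For \(s\le2\) it follows from Hölder, \(\|F\|_{H^s}\le\|F\|_{H^2}\). For \(s>2\) it suffices to treat an even integer \(s=2N\ge s\) and bound \(\|F\|_{L^{2N}}^{2N}=\|F^N\|_{L^2}^2\).

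\begin{itemize}
\item First reduce to \(\lambda\ge 2N+1\) (say). Split \((n_k)\) into \(J\) subsequences \(k\equiv j\pmod J\), where \(\lambda^J\ge 2N+1\). By Minkowski, \(\|F\|_{2N}\le\sum_j\|F_j\|_{2N}\), and the resulting factor \(J\) depends only on \(\lambda\) and \(s\).
\item Now expand \(F^N=\sum a_{k_1}\cdots a_{k_N}z^{n_{k_1}+\dots+n_{k_N}}\). With strong lacunarity, a frequency \(n_{k_1}+\dots+n_{k_N}\) (ordered \(k_1\ge\dots\ge k_N\)) determines the multiset \(\{k_i\}\) uniquely: the largest index is read off from the size of the sum, and one recurses.
\item Hence \(\|F^N\|_2^2\le N!\sum|a_{k_1}|^2\cdots|a_{k_N}|^2\le N!\,(\sum|a_k|^2)^N\).
\end{itemize}

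This gives \(B_{s,\lambda}\).

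\emph{Lower bound.} For \(s\ge2\) it is again Hölder, \(\|F\|_{H^s}\ge\|F\|_{H^2}=1\). For \(1\le s<2\) I would use log-convexity of \(L^p\)-norms. Write \(2=\theta s+(1-\theta)4\) with \(\theta\in(0,1)\). Then
\[
1=\|F\|_2^2\le\|F\|_s^{\theta s}\|F\|_4^{4(1-\theta)}\le\|F\|_s^{\theta s}B_{4,\lambda}^{4(1-\theta)},
\]
which yields \(\|F\|_s\ge B_{4,\lambda}^{-4(1-\theta)/(\theta s)}=:A_{s,\lambda}\).

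The only step requiring care is the combinatorial uniqueness of representations in the \(2N\)-th moment, which is why I first pass to subsequences with large ratio. Everything else is Hölder and interpolation.
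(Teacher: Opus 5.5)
Your proposal is correct and takes essentially the same approach as the paper, which proves this lemma only by citing the classical Paley--Zygmund theorem (Zygmund, Vol.~I, Ch.~V, Theorem~8.20; Duren). Your sketch is the standard proof behind that citation: splitting into subsequences with ratio $\lambda^J>N$, noting that $n_{k_1}\le\sum_i n_{k_i}<n_{k_1+1}$ forces unique representations so that the multinomial count gives $\|F^N\|_{2}^2\le N!\,(\sum_k|a_k|^2)^N$, and interpolating between $L^s$ and $L^4$ for the lower bound when $s<2$.
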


\begin{proof}
This is the classical Paley--Zygmund theorem for lacunary series; see
Zygmund~\cite[Vol.~I, Ch.~V, Theorem~8.20]{Zygmund2002}.
For the Hardy space formulation used here, see also
Duren~\cite[p.~104]{Duren1970} and the proof of
\cite[Proposition~3.2]{LNST2017}.
\end{proof}

%%%%%%%%%%%%%%%%%%%%%%%%%%%%%%%%%%%%%%%%%%%%%%%%%%%%%

\section{Pullback measures and compactness}
\label{sec:pullback}

We collect the Carleson and pullback measure tools used below,
summarize the classical boundedness and compactness criteria for
composition operators between Hardy spaces, and derive the compactness
consequences needed later.

%------------------------------------------------
\subsection{Closed-disk Carleson criteria}
\label{subsec:carleson}
%------------------------------------------------

Following Blasco--Jarchow~\cite{BlascoJarchow2005}, let
\(\mathcal I\) denote the family of half-open arcs \(I\subset\T\) with
\(0<|I|<1\), where \(|I|=m(I)\), and define
\[
        S(I)
        =
        \{r\zeta:\zeta\in I,\ 1-|I|\le r<1\}.
\]

Let \(\mu\) be a finite positive Borel measure on
\(\overline\D\), and write
\[
        \mu_\D=\mu|_\D,
        \qquad
        \mu_\T=\mu|_\T.
\]
Let  \(0<p,q<\infty\). For \(f\in H^p\), choose a Borel representative
of its boundary function \(f^*\) and put
\[
        f^\bullet(w)
        =
        \begin{cases}
        f(w), & w\in\D,\\[1mm]
        f^*(w), & w\in\T.
        \end{cases}
\]
Consider the formal map
\[
        J_\mu:H^p\longrightarrow L^q(\overline\D,\mu),
        \qquad
        J_\mu f=f^\bullet.
\]
We call \(\mu\) a \((p,q)\)-Carleson measure if this map is well
defined and bounded.

The closed-disk Carleson criteria needed below are as follows.

\begin{proposition}
\label{prop:carleson}
Let \(0<p\le q<\infty\), and let \(\mu\) be a finite positive Borel
measure on \(\overline\D\).

\begin{enumerate}[label=\textup{(\roman*)}]
\item If \(p<q\), then
\(J_\mu:H^p\to L^q(\mu)\) is well defined and bounded if and only if
\[
        \mu_\T=0
        \qquad\text{and}\qquad
        \sup_{I\in\mathcal I}
        \frac{\mu_\D(S(I))}{|I|^{q/p}}
        <\infty.
\]
If \(1\le p<q<\infty\), then \(J_\mu\) is compact if and only if
\[
        \mu_\T=0
        \qquad\text{and}\qquad
        \lim_{h\to0^+}
        \sup_{\substack{I\in\mathcal I\\ |I|\le h}}
        \frac{\mu_\D(S(I))}{|I|^{q/p}}
        =
        0.
\]

\item If \(p=q\), then
\(J_\mu:H^p\to L^p(\mu)\) is well defined and bounded if and only if
\[
        \mu_\T=F\,dm
        \quad\text{for some }F\in L^\infty(\T),
        \qquad
        \sup_{I\in\mathcal I}
        \frac{\mu_\D(S(I))}{|I|}
        <\infty.
\]
If \(1\le p<\infty\), then \(J_\mu\) is compact if and only if
\[
        \mu_\T=0
        \qquad\text{and}\qquad
        \lim_{h\to0^+}
        \sup_{\substack{I\in\mathcal I\\ |I|\le h}}
        \frac{\mu_\D(S(I))}{|I|}
        =
        0.
\]
\end{enumerate}
\end{proposition}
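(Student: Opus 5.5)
This is a classical result, so the plan is to reduce it to the open-disk Carleson embedding theorems and add a short argument for the boundary part of \(\mu\). Split \(J_\mu f=f^\bullet\) into the disk part \(J_{\mu_\D}f=f|_\D\) and the boundary part \(J_{\mu_\T}f=f^*|_\T\). Then
\[
\|J_\mu f\|_{L^q(\mu)}^q=\|f\|_{L^q(\mu_\D)}^q+\|f^*\|_{L^q(\mu_\T)}^q,
\]
so \(J_\mu\) is well defined and bounded exactly when both pieces are, and it is compact exactly when both pieces are compact. For the disk part, the conditions
\[
\sup_I \frac{\mu_\D(S(I))}{|I|^{q/p}}<\infty
\]
and its vanishing version are Duren's theorem for \(p<q\) and Carleson's theorem for \(p=q\). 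Compactness for \(p\ge1\) follows from the standard argument: normalized reproducing-kernel test functions \(k_a^{2/p}\) give necessity, and splitting \(\mu_\D\) into a part on \(|z|\le1-h\) (compact by uniform convergence on compacta) and a part with small Carleson constant gives sufficiency. These facts I cite from Blasco--Jarchow~\cite{BlascoJarchow2005}, where this closed-disk formulation appears.

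The real work is the boundary piece. First, \(J_{\mu_\T}\) is well defined only if \(\mu_\T\ll m\). If \(\mu_\T\) had a singular part \(\nu\) carried by an \(m\)-null compact set \(K\), Fatou's construction gives outer \(f\in A(\D)\) with \(f^*=1\) on \(K\) but arbitrarily small \(H^p\) norm. The same function class (or a modification on an \(m\)-null set, which keeps the same \(H^p\) element) shows the value \(f^*|_K\) is not determined by \(f\in H^p\), so \(J_\mu\) is not well defined. Hence \(\mu_\T=F\,dm\).

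In the case \(p=q\), boundedness of \(J_{\mu_\T}\) is equivalent to \(F\in L^\infty\). Sufficiency is immediate. For necessity, take outer functions \(f\) with \(|f^*|^p=\chi_A+\varepsilon\) to get \(\int_A F\lesssim m(A)\) for every measurable \(A\). For \(p<q\), the same outer test functions give
\[
\int_A F\,dm\lesssim m(A)^{q/p},
\]
which forces \(F=0\) a.e. by Lebesgue differentiation, since \(q/p>1\). This proves \(\mu_\T=0\).

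For compactness with \(p=q\), I would show that \(F\ne0\) on a set of positive measure contradicts compactness. Take \(f_n=z^n\), or better \(f_n=z^n\,\mathrm{outer}(\chi_A)\), which tends to zero weakly in \(H^p\) for \(p>1\) while \(\|f_n^*\|_{L^p(F\,dm)}\) stays bounded below. For \(p=1\), where weak-null arguments are weaker, I would use \(f_n=z^n\cdot 1\) and the definition of compactness via bounded sequences converging uniformly on compacta to \(0\), which is the standard characterization for composition-type embeddings. The main obstacle I expect is this \(p=1\) compactness step and the well-definedness issue, and I would handle both with those explicit outer/monomial test functions.
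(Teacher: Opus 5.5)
\textbf{Route.} Your route differs from the paper's. The paper proves nothing here: it quotes Blasco--Jarchow (Theorems~2.5 and~2.6 for boundedness, Theorem~3.4 for compactness), where the closed-disk statement, including the boundary part of $\mu$, is already established. You cite only the open-disk embedding theorems (Duren, Carleson and their vanishing versions) and treat $\mu_\T$ by hand.

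Your boundary argument has three pieces:
\begin{itemize}
\item the splitting $\|J_\mu f\|^q=\|f\|^q_{L^q(\mu_\D)}+\|f^*\|^q_{L^q(\mu_\T)}$;
\item the change-of-representative argument forcing $\mu_\T\ll m$ (here $f\equiv1$, with $f^*$ altered on an $m$-null set charged by $\mu_\T$, already suffices, so the peak-function remark is unnecessary);
\item outer test functions with $|f^*|^p=\chi_A+\varepsilon$, which give $F\in L^\infty$ when $p=q$, and $\int_A F\,dm\lesssim m(A)^{q/p}$, hence $F=0$, when $p<q$.
\end{itemize}
All of this is correct. It also makes transparent why the boundary part must vanish or have bounded density, which the citation hides.

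\textbf{Gap at $p=q=1$.} The compactness step at $p=q=1$ does not hold up as written. You invoke the criterion ``compact iff bounded sequences tending to $0$ locally uniformly are mapped to norm-null sequences.'' Its proof identifies a norm limit $g$ of $Jf_{n_k}$ through pointwise values in $\D$. That works for $J_{\mu_\D}$ (and for $C_\varphi$), so your kernel test functions are fine for the disk piece. On the boundary component $L^1(F\,dm)$, however, locally uniform convergence in $\D$ says nothing about $g$. Using the criterion for $J_\mu$ therefore presupposes the conclusion $F=0$ that you are trying to prove.

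\textbf{Fix.} No special treatment of $p=1$ is needed, because $z^n\to0$ weakly in $H^1$ too. By Hahn--Banach every functional on $H^1\subset L^1(\T)$ has the form $f\mapsto\int_\T f^*h\,dm$ for some $h\in L^\infty(\T)$. Then $\int_\T\zeta^n h(\zeta)\,dm(\zeta)=\widehat h(-n)\to0$ by Riemann--Lebesgue. A compact operator maps weakly null sequences to norm-null ones, yet $\|J_\mu z^n\|^p_{L^p(\mu)}\ge\int_\T F\,dm$ for every $n$. Hence $F=0$ a.e.\ for all $1\le p<\infty$ at once.

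Also, there is no outer function with modulus $\chi_A$, since $\log\chi_A\notin L^1$. This does not matter, because $z^n$ alone suffices.
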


\begin{proof}
The boundedness assertions are
\cite[Theorems~2.5 and~2.6]{BlascoJarchow2005}, and the compactness
assertions are \cite[Theorem~3.4]{BlascoJarchow2005}.
\end{proof}

%------------------------------------------------
\subsection{Pullback measures and boundary contact}
\label{subsec:pullback}
%------------------------------------------------

Let \(\varphi:\D\to\D\) be analytic. By Fatou's theorem,
\(\varphi\) has radial boundary values \(\varphi^*\) almost everywhere
on \(\T\). After changing \(\varphi^*\) on an \(m\)-null set, we may
assume that
\(
        \varphi^*:\T\longrightarrow\overline{\D}
\)
is Borel measurable. The associated pullback measure on
\(\overline{\D}\) is defined by
\[
        \mu_\varphi(A)
        =
        m\{\zeta\in\T:\varphi^*(\zeta)\in A\},
        \qquad
        A\subset\overline{\D}\ \text{Borel}.
\]
Recall that
\(E_\varphi=\{\zeta\in\T:|\varphi^*(\zeta)|=1\}\). Then
\begin{equation}
\label{eq:boundary-mass}
        (\mu_\varphi)_\T(\T)
        =
        \mu_\varphi(\T)
        =
        m(E_\varphi).
\end{equation}

We first record the pullback identity on \(\overline{\D}\).

\begin{lemma}
\label{lem:pullback-id}
Let \(0<p,q<\infty\), and let \(\varphi:\D\to\D\) be analytic. For
every \(f\in H^p\), the function \(f^\bullet\) is well defined up to a
\(\mu_\varphi\)-null set, independently of the chosen Borel
representative of \(f^*\). Moreover,
\[
        \|C_\varphi f\|_{H^q}^q
        =
        \int_\T
        \bigl|f^\bullet(\varphi^*(\zeta))\bigr|^q\,dm(\zeta)
        =
        \int_{\overline{\D}}
        |f^\bullet(w)|^q\,d\mu_\varphi(w),
\]
where the common value \(+\infty\) is allowed. Consequently,
\(C_\varphi:H^p\to H^q\) is bounded if and only if
\[
        J_{\mu_\varphi}:H^p
        \longrightarrow
        L^q(\overline{\D},\mu_\varphi),
        \qquad
        J_{\mu_\varphi}f=f^\bullet,
\]
is bounded. If \(1\le p,q<\infty\), then
\(C_\varphi:H^p\to H^q\) is compact if and only if
\(J_{\mu_\varphi}\) is compact.
\end{lemma}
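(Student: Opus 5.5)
The plan is to prove the three assertions of Lemma~\ref{lem:pullback-id} in turn: well-definedness, the norm identity, and the transfer of boundedness and compactness.

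\emph{Well-definedness.} Two Borel representatives of \(f^*\) differ only on an \(m\)-null set \(N\subset\T\). Since \(\mu_\varphi(N)=m\{\zeta:\varphi^*(\zeta)\in N\}\), I need \(m\bigl((\varphi^*)^{-1}(N)\bigr)=0\) for every Borel \(N\subset\T\) with \(m(N)=0\). I would obtain this from the fact that \((\mu_\varphi)_\T\) is absolutely continuous with respect to \(m\). For a closed null set \(K\subset\T\), take a bounded outer function \(h\) with \(|h^*|=1\) off an open neighbourhood \(U\) of \(K\) and \(|h^*|=\varepsilon\) on \(U\); one may take \(\log|h^*|=\log\varepsilon\cdot\chi_U\), so that \(h\) extends continuously to \(\overline{\D}\) near \(\overline K\), is close to \(0\) there, and \(|h|\le 1\). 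Then \(h\circ\varphi\) has boundary values \(h(\varphi^*)\) a.e. on \(\{\varphi^*\in K\}\), hence is small on that set. Combining this with the subharmonic estimate \(\log|h\circ\varphi(0)|\le\int\log|h^*\circ\varphi^*|\,dm\) and the harmonic-measure bound
\[
-\log|h(\varphi(0))|\le\log(1/\varepsilon)\,P[\chi_U](\varphi(0))\lesssim\log(1/\varepsilon)\,m(U),
\]
I get
\[
m\{\varphi^*\in K\}\le C_{\varphi(0)}\,m(U).
\]
Letting \(m(U)\to 0\), and then using inner regularity, gives the claim for every Borel null set \(N\). The classical statement this encodes is that \((\mu_\varphi)_\T\le\frac{1+|\varphi(0)|}{1-|\varphi(0)|}\,m\) (Aleksandrov--Clark), which I could simply cite instead.

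\emph{Norm identity.} The second equality is just the change of variables for the image measure \(\mu_\varphi=m\circ(\varphi^*)^{-1}\). For the first equality, I need that the radial limit of \(f\circ\varphi\) equals \(f^\bullet(\varphi^*(\zeta))\) for a.e.\ \(\zeta\), and that
\[
\|C_\varphi f\|_{H^q}^q=\int_\T|(f\circ\varphi)^*|^q\,dm,
\]
with both sides possibly infinite.

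On \(\{|\varphi^*|<1\}\) this is immediate from continuity of \(f\) in \(\D\). On \(E_\varphi\), the radial path \(\varphi(r\zeta)\) tends to \(\varphi^*(\zeta)\in\T\). I would argue via Lindelöf: \(f\) has nontangential limit \(f^*(w)\) for a.e.\ \(w\in\T\), but the curve \(\varphi(r\zeta)\) need not be nontangential. Instead I would use the standard trick:
\begin{enumerate}[label=\textup{(\arabic*)}]
\item \(f\circ\varphi\) is in \(H^p\) by subordination, so it has radial limits a.e.;
\item approximate \(f\) by polynomials \(f_n\to f\) in \(H^p\). For polynomials the identity \((f_n\circ\varphi)^*=f_n(\varphi^*)\) is trivial. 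Then \(f_n\circ\varphi\to f\circ\varphi\) in \(H^p\) by Littlewood, so a subsequence of the boundary values converges a.e.;
\item \(f_n^\bullet\to f^\bullet\) in \(L^p(\mu_\varphi)\), using the boundedness of \(J_{\mu_\varphi}\) on \(H^p\) into \(L^p\) coming from the case \(p=q\). This case follows from Littlewood together with the absolute continuity above, which gives boundedness of the density and hence control of the \(\T\)-part by \(\|f^*\|_{L^p}\). A further subsequence then gives a.e. convergence of \(f_n^\bullet\circ\varphi^*\).
\end{enumerate}
This identifies \((f\circ\varphi)^*=f^\bullet\circ\varphi^*\) a.e. Finally, \(\|g\|_{H^q}^q=\int|g^*|^q\,dm\) holds for \(g\in H^p\), with the value \(+\infty\) allowed when \(g\notin H^q\), because the \(H^q\) norm is the sup of the integral means and Fatou plus the Smirnov class argument apply: \(g\in H^p\subset N^+\), and for \(g\in N^+\) one has \(g\in H^q\) if and only if \(g^*\in L^q\).

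\emph{Boundedness and compactness.} By the identity, \(\|C_\varphi f\|_{H^q}=\|J_{\mu_\varphi}f\|_{L^q(\mu_\varphi)}\), so boundedness is equivalent. For compactness with \(1\le p,q<\infty\), I would use reflexivity or weak-star compactness. A bounded operator between these spaces is compact if and only if it maps bounded sequences converging locally uniformly to zero (equivalently weak-star in \(H^p\), using \(H^1\) as the dual of VMOA) into norm-null sequences. Since the two operators have identical norm values on every \(f\), and \(C_\varphi\) and \(J_{\mu_\varphi}\) share the domain \(H^p\), a sequence argument shows compactness is equivalent.

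More carefully: compactness of \(T\) is equivalent to \(\|Tf_n\|\to0\) for every bounded \(f_n\to0\) locally uniformly. For \(C_\varphi\) this is the standard criterion. For \(J_{\mu_\varphi}\), given such \(f_n\), I pass to subsequences so that \(J f_n\) converges in norm, and identify the limit as \(0\) via \(\|C_\varphi\cdot\|=\|J\cdot\|\) together with the same criterion.

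\textbf{Main obstacle.} I expect the main obstacle to be the absolute continuity of \((\mu_\varphi)_\T\), that is, the well-definedness on \(E_\varphi\) and the boundary identification there; the rest is routine.
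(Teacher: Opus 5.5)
Your argument is correct, but it reaches the key boundary identity by a different route. The paper does not prove absolute continuity separately. It cites Blasco--Jarchow for \((f\circ\varphi)^*=f^\bullet\circ\varphi^*\) a.e.\ for \emph{every} Borel representative of \(f^*\). Well-definedness then follows at once: two representatives give the same left-hand side, so they agree \(\mu_\varphi\)-a.e.

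You instead first prove \((\mu_\varphi)_\T\ll m\) by a harmonic-measure/Jensen estimate. Your outer function enters only through \(\log|h|=\log\varepsilon\cdot P[\chi_U]\), and \(\varepsilon\) cancels, so the estimate is really \(m\{\varphi^*\in U\}\le P[\chi_U](\varphi(0))\). You then obtain the identity by polynomial approximation, Littlewood subordination and a.e.\ subsequences. This buys three things:
\begin{itemize}
\item the lemma becomes self-contained;
\item it yields the quantitative bound \((\mu_\varphi)_\T\le\frac{1+|\varphi(0)|}{1-|\varphi(0)|}\,m\), which would also give the first assertion of Lemma~\ref{lem:boundary-part} without Proposition~\ref{prop:carleson};
\item it handles the value \(+\infty\) explicitly via Smirnov's theorem, a point the paper leaves implicit.
\end{itemize}
In step (3), absolute continuity alone suffices: take \(f_n\to f^*\) off an \(m\)-null set \(N\) and use \(m\{\varphi^*\in N\}=0\). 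Bounded density, if wanted, comes from your estimate on open sets \(U\), not from absolute continuity itself.

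For compactness, the paper's argument is shorter. Applying the identity to \(f-g\) shows that \(C_\varphi f\mapsto J_{\mu_\varphi}f\) is an isometry between the two ranges, so Cauchy subsequences correspond and completeness finishes. Your detour through the locally uniform criterion ends up needing exactly this fact, to show the limit is \(0\) when \(J_{\mu_\varphi}\) is assumed compact. Also, the criterion as you first state it, for arbitrary bounded operators, fails on \(H^1\). A rank-one map \(f\mapsto\langle f,g\rangle e\) with \(g\in\mathrm{BMOA}\setminus\mathrm{VMOA}\) is compact but does not send bounded locally uniformly null sequences to null sequences. So the criterion should be invoked only for \(C_\varphi\), where Montel's theorem suffices and no VMOA duality is needed.
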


\begin{proof}
Blasco--Jarchow~\cite[Section~1]{BlascoJarchow2005} show that, for any
Borel representative of \(f^*\),
\[
        (f\circ\varphi)^*
        =
        f^\bullet\circ\varphi^*
        \quad\text{a.e. on }\T.
\]
Consequently, changing the Borel representative of \(f^*\) changes
\(f^\bullet\) only on a \(\mu_\varphi\)-null set. The boundary identity
and the definition of \(\mu_\varphi\) give the two integral identities.
The boundedness equivalence follows immediately.

Assume now that the equivalent boundedness conditions hold. Applying
the pullback identity to \(f-g\), we obtain
\[
        \|C_\varphi f-C_\varphi g\|_{H^q}
        =
        \|J_{\mu_\varphi}f-J_{\mu_\varphi}g\|
        _{L^q(\overline{\D},\mu_\varphi)},
        \qquad
        f,g\in H^p.
\]
The sequential criterion for compactness therefore shows that
\(C_\varphi\) is compact if and only if \(J_{\mu_\varphi}\) is compact.
\end{proof}

We next record the structure of the boundary part of the pullback
measure.

\begin{lemma}
\label{lem:boundary-part}
Let \(\varphi:\D\to\D\) be analytic, and write
\(\nu=(\mu_\varphi)_{\T}\). Then \(d\nu=h\,dm\) for some
nonnegative \(h\in L^\infty(\T)\). If \(m(E_\varphi)>0\), then there
exist a measurable set \(F\subset\T\) with \(m(F)>0\) and a constant
\(\delta>0\) such that \(h\ge\delta\) almost everywhere on \(F\).
\end{lemma}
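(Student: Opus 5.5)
The plan is to show that \(\nu\) is absolutely continuous with bounded density by comparing it with \(m\) on arcs, then extract \(F\) from the positive total mass. The key input is a pointwise bound for the pullback measure near the boundary. For an arc \(I\subset\T\), we need to control \(\nu(I)=m\{\zeta\in E_\varphi:\varphi^*(\zeta)\in I\}\) by a constant multiple of \(|I|\).

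To get this, let \(u\) be the Poisson integral of \(\chi_I\). It is a harmonic function on \(\D\) with values in \([0,1]\), and \(u\circ\varphi\) is a bounded harmonic function on \(\D\). Its radial limits equal \(u^\bullet(\varphi^*)\) almost everywhere, where \(u^\bullet\) is the boundary extension of \(u\). For a.e. \(\zeta\) with \(\varphi^*(\zeta)\) in the interior of \(I\), the radial limit of \(u\circ\varphi\) is \(1\); the subtlety that \(\varphi(r\zeta)\) approaches \(\varphi^*(\zeta)\) possibly tangentially is avoided because \(u\) extends continuously to \(\overline\D\) away from the two endpoints of \(I\). The endpoints carry \(\nu\)-mass zero except for countably many atoms, and these are in fact excluded by the same estimate applied to shrinking arcs.

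Hence
\[
\nu(\operatorname{int} I)\le\int_\T (u\circ\varphi)^*\,dm=u(\varphi(0)).
\]
Harnack's inequality then gives \(u(\varphi(0))\le \frac{1+|\varphi(0)|}{1-|\varphi(0)|}\,|I|\). By regularity of Borel measures and the fact that open sets are countable disjoint unions of arcs, it follows that \(\nu(A)\le C\,m(A)\) for every Borel set \(A\subset\T\), with \(C=\frac{1+|\varphi(0)|}{1-|\varphi(0)|}\). Radon--Nikodym then gives \(d\nu=h\,dm\) with \(0\le h\le C\) a.e. Alternatively, I could quote Proposition~\ref{prop:carleson}(ii) combined with Lemma~\ref{lem:pullback-id}, since \(C_\varphi\) is bounded on \(H^p\) by Littlewood subordination; this yields \(\mu_\T=F\,dm\) with \(F\in L^\infty\) directly. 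I would present that shortcut as the main argument and keep the Harnack computation only as a remark.

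For the second claim, \eqref{eq:boundary-mass} gives \(\int_\T h\,dm=\nu(\T)=m(E_\varphi)>0\), so \(h\) is not a.e. zero. Since \(\{h>0\}=\bigcup_n\{h\ge1/n\}\) has positive measure, some set \(F=\{h\ge1/n\}\) has \(m(F)>0\), and we take \(\delta=1/n\). The only real obstacle is the absolute continuity and boundedness of the density, and that is supplied by the Blasco--Jarchow criterion; the remaining step is immediate.
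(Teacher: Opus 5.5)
Your main argument is the paper's proof: $C_\varphi$ is bounded on a diagonal Hardy space (the paper takes $H^2$), so Lemma~\ref{lem:pullback-id} and Proposition~\ref{prop:carleson}\textup{(ii)} give $d\nu=h\,dm$ with $h\in L^\infty(\T)$, and then $\int_\T h\,dm=m(E_\varphi)>0$ produces a level set $\{h\ge\delta\}$ of positive measure. Your Harnack remark is also correct and self-contained. It avoids the Blasco--Jarchow citation and gives the explicit bound $h\le\frac{1+|\varphi(0)|}{1-|\varphi(0)|}$ almost everywhere.
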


\begin{proof}
Every analytic self-map of \(\D\) induces a bounded composition
operator on \(H^2\). By Lemma~\ref{lem:pullback-id},
\(J_{\mu_\varphi}:H^2\to L^2(\overline{\D},\mu_\varphi)\) is bounded.
Thus \(\mu_\varphi\) is a \((2,2)\)-Carleson measure, and
Proposition~\ref{prop:carleson}\textup{(ii)} gives
\(d\nu=h\,dm\) for some nonnegative \(h\in L^\infty(\T)\).

If \(m(E_\varphi)>0\), then
\eqref{eq:boundary-mass} gives
\[
        \int_{\T} h\,dm
        =
        \nu(\T)
        =
        m(E_\varphi)
        >
        0.
\]
Hence, for some \(\delta>0\), the set
\(F=\{\zeta\in\T:h(\zeta)\ge\delta\}\) has positive measure.
\end{proof}

%------------------------------------------------
\subsection{Classical criteria and the case
\texorpdfstring{\(p<q\)}{p<q}}
\label{subsec:smoothing}
%------------------------------------------------

We first summarize the classical boundedness and compactness criteria
for composition operators between Hardy spaces.

\begin{proposition}
\label{prop:classical}
Let \(1\le p,q<\infty\), and let \(\varphi:\D\to\D\) be analytic.
For \(w\in\D\setminus\{\varphi(0)\}\), let
\(
        N_\varphi(w)
        :=
        \sum\limits_{\varphi(z)=w}
        \log\frac{1}{|z|},
\)
where preimages are counted with multiplicity.

\begin{enumerate}[label=\textup{(\roman*)}]
\item If \(q<p\), then \(C_\varphi:H^p\to H^q\) is bounded, and
\[
        C_\varphi:H^p\to H^q\text{ is compact}
        \quad\Longleftrightarrow\quad
        m(E_\varphi)=0.
\]

\item If \(p=q\), then \(C_\varphi:H^p\to H^p\) is bounded, and
\[
        C_\varphi:H^p\to H^p\text{ is compact}
        \quad\Longleftrightarrow\quad
        N_\varphi(w)
        =
        o\!\left(\log\frac{1}{|w|}\right),
        \qquad |w|\to1.
\]

\item If \(p<q\), then
\[
        C_\varphi:H^p\to H^q\text{ is bounded}
        \quad\Longleftrightarrow\quad
        N_\varphi(w)
        =
        O\!\left(
        \left(\log\frac{1}{|w|}\right)^{q/p}
        \right),
        \qquad |w|\to1,
\]
and
\[
        C_\varphi:H^p\to H^q\text{ is compact}
        \quad\Longleftrightarrow\quad
        N_\varphi(w)
        =
        o\!\left(
        \left(\log\frac{1}{|w|}\right)^{q/p}
        \right),
        \qquad |w|\to1.
\]
\end{enumerate}
\end{proposition}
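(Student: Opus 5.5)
This proposition only collects classical criteria, so the plan is to derive each part from earlier results in the paper or from the literature.

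\emph{Part (i), $q<p$.} Boundedness follows from Littlewood's subordination principle, $\|C_\varphi f\|_{H^q}\le C\|C_\varphi f\|_{H^p}\lesssim\|f\|_{H^p}$, together with the inclusion $H^p\subset H^q$. For compactness, Lemma~\ref{lem:pullback-id} reduces the question to compactness of $J_{\mu_\varphi}:H^p\to L^q(\mu_\varphi)$.
\begin{itemize}
\item If $m(E_\varphi)>0$, Lemma~\ref{lem:boundary-part} gives a set $F$ with $m(F)>0$ and $\|C_\varphi f\|_{H^q}\ge\delta^{1/q}\|f^*\|_{L^q(F)}$. Take $f_n=z^n$: this sequence is bounded in $H^p$ and tends to $0$ weakly (for $p=1$, uniformly on compacta), while $\|f_n\|_{L^q(F)}=m(F)^{1/q}$. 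Hence the sequence $C_\varphi f_n$ does not tend to $0$ in norm, and $C_\varphi$ is noncompact.
\item If $m(E_\varphi)=0$, then $\mu_\varphi$ lives on $\D$. Since $C_\varphi$ is bounded on $H^p$, the measure $\mu_\varphi$ is $(p,p)$-Carleson. For $q<p$, compactness of the embedding $H^p\to L^q(\mu)$ for a $(p,p)$-Carleson $\mu$ with $\mu_\T=0$ follows from a standard argument. Split $\mu=\mu|_{|w|\le\rho}+\mu|_{\rho<|w|<1}$. The first piece gives a compact operator, since it only sees uniform convergence on compacta. For the second, Hölder's inequality gives $\|f\|_{L^q(\mu|_{\rho<|w|})}\le\|f\|_{L^p(\mu)}\,\mu(\{\rho<|w|<1\})^{1/q-1/p}$, and $\mu(\{\rho<|w|<1\})\to\mu_\T(\T)=0$ as $\rho\to1$. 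This is Goebeler's theorem, and I would simply cite \cite[Corollary~5]{Goebeler2001}.
\end{itemize}

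\emph{Part (ii), $p=q$.} Boundedness is Littlewood's principle. Shapiro's theorem \cite{Shapiro1993} gives the compactness criterion via the Nevanlinna counting function on $H^2$. To pass to $H^p$, use the $p$-independence of compactness: by Lemma~\ref{lem:pullback-id}, Proposition~\ref{prop:carleson}(ii) gives a Carleson-window criterion for compactness that does not depend on $p$. Cite \cite{CowenMacCluer1995} for this.

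\emph{Part (iii), $p<q$.} The plan is to cite the Nevanlinna counting function characterizations of \cite{PerezGonzalezRattyaVukotic2007}. Alternatively, one can combine Proposition~\ref{prop:carleson}(i) with the standard equivalence between the window condition $\mu_\varphi(S(I))\lesssim|I|^{q/p}$ (and its little-$o$ version) and the condition $N_\varphi(w)=O\bigl((\log1/|w|)^{q/p}\bigr)$ (respectively $o$), obtained through the Littlewood–Paley identity.

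The main obstacle is this last equivalence between Carleson windows and counting-function averages; I would not reprove it but rely on the cited literature. The only argument to write out in full is the noncompactness direction of (i), which is the $z^n$ argument above.
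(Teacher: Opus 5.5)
Your proposal is correct. It agrees with the paper on the boundedness statements and on (iii), but takes a different route in (i) and (ii).

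\textbf{Paper's proof.} It is pure citation: Littlewood's principle plus Goebeler's Corollary~5 for (i), and Theorem~A of P\'erez-Gonz\'alez--R\"atty\"a--Vukoti\'c with $\alpha=\beta=-1$ for both (ii) and (iii).

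\textbf{Part (i).} You actually prove Goebeler's criterion from the paper's own tools.
\begin{itemize}
\item For noncompactness, you apply the lower bound from Lemma~\ref{lem:boundary-part} to the weakly null sequence $z^n$. This is legitimate because $p>q\ge1$, so your $p=1$ caveat never arises. More directly still, $\|\varphi^n\|_{H^q}^q\ge m(E_\varphi)$.
\item For compactness, you split $\mu_\varphi$ into a piece supported on $\{|w|\le\rho\}$ and a thin annulus, and control the annulus by H\"older and the $(p,p)$-Carleson bound. This works. One small correction: $\mu_\varphi(\{\rho<|w|<1\})\to0$ holds for any finite measure. The role of $(\mu_\varphi)_\T=0$ is only that the two pieces exhaust $\mu_\varphi$.
\end{itemize}

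\textbf{Part (ii).} You reduce to Shapiro's $H^2$ theorem via the $p$-independence of compactness, rather than citing the counting-function criterion with $p=q$. That independence follows from Lemma~\ref{lem:pullback-id} together with the $p$-free vanishing condition in Proposition~\ref{prop:carleson}(ii).

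\textbf{Comparison.} Your version makes (i) self-contained. It also exposes the mechanism the paper later relies on in Lemma~\ref{lem:boundary-lower}, where noncompactness is converted into $m(E_\varphi)>0$ and a boundary lower estimate. Your (ii) rests on the more classical Shapiro theorem. The paper's version buys economy: one citation handles (ii) and (iii) uniformly. Your alternative for (iii), comparing the window condition with the counting function via Littlewood--Paley, is left unproved, so there you end up citing the same result the paper does.
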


\begin{proof}
For \(q<p\), boundedness follows from Littlewood's subordination
principle and the inclusion \(H^p\subset H^q\). The compactness
criterion is due to Goebeler~\cite[Corollary~5, p.~390]{Goebeler2001}.

For \(p\le q\), the assertions follow from
\cite[Theorem~A, p.~311]{PerezGonzalezRattyaVukotic2007} by taking
\(\alpha=\beta=-1\). Indeed, with the convention \(A_{-1}^p=H^p\) used there,
\(N_{\varphi,1}=N_\varphi\) and
\(
\frac{q(2+\alpha)}{p}
=\frac{q}{p}.
\)
\end{proof}
The \(p=2\) case of part~\textup{(ii)} is Shapiro's classical criterion in terms of the Nevanlinna counting
function~\cite{Shapiro1987}.
The criteria above summarize the classical boundedness and compactness
theory for composition operators between Hardy spaces. In the
fixed-copy arguments below, we use their equivalent pullback-measure
formulations, which make explicit the boundary part of
\(\mu_\varphi\) and the relevant local Carleson estimates.

We next record the consequences needed in the range \(p<q\).

\begin{corollary}
\label{cor:boundary-obstruction}
Let \(1\le p<q<\infty\), and suppose that
\(C_\varphi:H^p\to H^q\) is bounded. Then
\[
        (\mu_\varphi)_{\T}=0,
        \qquad
        (\mu_\varphi)_{\D}(S(I))
        \lesssim |I|^{q/p},
        \qquad I\in\mathcal I.
\]
In particular, \(m(E_\varphi)=0\). Moreover,
\(C_\varphi:H^p\to H^p\) is compact, and
\(C_\varphi:H^r\to H^q\) is compact for every \(r>p\).
\end{corollary}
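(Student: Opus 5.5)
We will read the corollary off from the pullback-measure reformulation in Lemma~\ref{lem:pullback-id} and the closed-disk Carleson criteria in Proposition~\ref{prop:carleson}.

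\emph{Step 1: the Carleson conditions.} Since \(C_\varphi:H^p\to H^q\) is bounded, Lemma~\ref{lem:pullback-id} shows that \(J_{\mu_\varphi}:H^p\to L^q(\overline\D,\mu_\varphi)\) is bounded. Because \(p<q\), Proposition~\ref{prop:carleson}\textup{(i)} gives
\[
(\mu_\varphi)_\T=0,\qquad (\mu_\varphi)_\D(S(I))\le C|I|^{q/p}\quad (I\in\mathcal I).
\]
By \eqref{eq:boundary-mass}, \(m(E_\varphi)=(\mu_\varphi)_\T(\T)=0\).

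\emph{Step 2: compactness on \(H^p\).} The boundary part of \(\mu_\varphi\) vanishes, and
\[
\frac{(\mu_\varphi)_\D(S(I))}{|I|}\le C|I|^{q/p-1}\longrightarrow0\qquad (|I|\to0),
\]
since \(q/p>1\). Hence the compactness criterion in Proposition~\ref{prop:carleson}\textup{(ii)}, applied with exponent \(p\ge1\), shows that \(J_{\mu_\varphi}:H^p\to L^p(\mu_\varphi)\) is compact. By Lemma~\ref{lem:pullback-id}, \(C_\varphi:H^p\to H^p\) is compact.

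\emph{Step 3: compactness on \(H^r\to H^q\) for \(r>p\).} This splits by the relative size of \(r\) and \(q\).

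\begin{enumerate}[label=\textup{(\alph*)}]
\item If \(r<q\), use Proposition~\ref{prop:carleson}\textup{(i)} with the pair \((r,q)\). We have \(\mu_\T=0\), and
\[
\frac{(\mu_\varphi)_\D(S(I))}{|I|^{q/r}}\le C|I|^{q/p-q/r}\longrightarrow0,
\]
since \(q/p>q/r\). So \(J_{\mu_\varphi}:H^r\to L^q(\mu_\varphi)\) is compact.
\item If \(r=q\), use Proposition~\ref{prop:carleson}\textup{(ii)}. The vanishing condition on \(|I|^{q/p-1}\) from Step~2 again gives compactness of \(J_{\mu_\varphi}:H^q\to L^q(\mu_\varphi)\).
\item If \(r>q\), apply Proposition~\ref{prop:classical}\textup{(i)}. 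Since \(m(E_\varphi)=0\) by Step~1, \(C_\varphi:H^r\to H^q\) is compact by Goebeler's theorem.
\end{enumerate}

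In cases (a) and (b), Lemma~\ref{lem:pullback-id} converts compactness of \(J_{\mu_\varphi}\) into compactness of \(C_\varphi:H^r\to H^q\).

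\emph{Expected obstacle.} The argument is mostly routine; the one point needing care is the case split in Step~3. No single Carleson criterion covers all \(r>p\): for \(r>q\) the target exponent lies below the source, so that range must be handled by Goebeler's criterion rather than by the Carleson box estimates.
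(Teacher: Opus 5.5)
Your proof is correct. Steps~1, 2, 3(a) and 3(b) coincide with the paper's argument; the paper treats \(p<r\le q\) together via the bound \(h^{q/p-q/r}\), which at \(r=q\) is your bound \(h^{q/p-1}\).

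The only difference is the range \(r>q\).
\begin{itemize}
\item You invoke Goebeler's criterion in Proposition~\ref{prop:classical}\textup{(i)} together with \(m(E_\varphi)=0\).
\item The paper instead uses the case \(r=q\) just proved and the factorization \(C_\varphi|_{H^r}=(C_\varphi|_{H^q})\circ\iota_{r,q}\) through the bounded inclusion \(\iota_{r,q}:H^r\hookrightarrow H^q\). Compactness is then inherited from \(C_\varphi:H^q\to H^q\).
\end{itemize}

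The paper's route is one line and keeps the corollary dependent only on the Blasco--Jarchow Carleson criteria. Yours relies on a deeper external theorem. In exchange, it shows that for \(r>q\) the Carleson decay \(|I|^{q/p}\) is not needed at all; \(m(E_\varphi)=0\) alone suffices.

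Consequently, your closing remark that the range \(r>q\) \emph{must} be handled by Goebeler's criterion rather than by Carleson box estimates is inaccurate. Composing the \(r=q\) Carleson case with the inclusion already disposes of it.
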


\begin{proof}
By Lemma~\ref{lem:pullback-id} and
Proposition~\ref{prop:carleson}\textup{(i)}, boundedness of
\(C_\varphi:H^p\to H^q\) gives the two assertions concerning
\(\mu_\varphi\). Equation~\eqref{eq:boundary-mass} then
gives \(m(E_\varphi)=0\).

Since \(q/p>1\),
\[
        \sup_{\substack{I\in\mathcal I\\ |I|\le h}}
        \frac{(\mu_\varphi)_{\D}(S(I))}{|I|}
        \lesssim
        h^{q/p-1}
        \longrightarrow0
        \qquad (h\to0^+).
\]
Proposition~\ref{prop:carleson}\textup{(ii)} and
Lemma~\ref{lem:pullback-id} now show that
\(C_\varphi:H^p\to H^p\) is compact.

Let \(p<r\le q\). Since
\[
        \sup_{\substack{I\in\mathcal I\\ |I|\le h}}
        \frac{(\mu_\varphi)_{\D}(S(I))}{|I|^{q/r}}
        \lesssim
        h^{q/p-q/r}
        \longrightarrow0
        \qquad (h\to0^+),
\]
Proposition~\ref{prop:carleson}, part~\textup{(i)} if
\(r<q\) and part~\textup{(ii)} if \(r=q\), together with
Lemma~\ref{lem:pullback-id}, shows that
\(C_\varphi:H^r\to H^q\) is compact.

Finally, if \(r>q\), let
\(\iota_{r,q}:H^r\hookrightarrow H^q\) denote the canonical inclusion.
The case \(r=q\) gives compactness of
\(C_\varphi:H^q\to H^q\). Since
\(C_\varphi|_{H^r}
=
\bigl(C_\varphi|_{H^q}\bigr)\circ\iota_{r,q}\),
it follows that \(C_\varphi:H^r\to H^q\) is compact.
\end{proof}

%-----------------------------------------------
%------------------------------------------------
\section{Localized analytic copies on sets of positive measure}
\label{sec:localization}

We prove Theorem~B in this section. First note that if
\(0<m(E)<1\) and \(1\le s\le p\), the restriction operator
\[
        R_E:H^p\longrightarrow L^s(E),
        \qquad
        R_Ef=f^*|_E,
\]
is not bounded below on \(H^p\). Indeed, for
\(0<\varepsilon<1\), let \(F_\varepsilon\) be an outer function with
boundary modulus \(\varepsilon\) on \(E\) and \(1\) on
\(\T\setminus E\).
Then
\[
        \|F_\varepsilon\|_{L^s(E)}
        =
        \varepsilon m(E)^{1/s},
        \qquad
        \|F_\varepsilon\|_{H^p}^p
        =
        \varepsilon^p m(E)+m(\T\setminus E).
\]
Consequently,
\[
        \frac{\|F_\varepsilon\|_{L^s(E)}}
             {\|F_\varepsilon\|_{H^p}}
        \longrightarrow0
        \qquad
        (\varepsilon\to0^+).
\]
Thus the desired lower estimate can hold only after restricting
\(R_E\) to a suitable subspace of \(H^p\).

The case \(r=2\) uses a dyadic lacunary tail construction. For \(p<r<2\), we combine
a localized realization of Kanter's stable integral with analytic
lifting through the Riesz projection. In both cases it suffices to
establish a lower estimate in \(L^1(E)\).  Indeed,   H\"older's inequality yields
\[
\|f\|_{L^s(E)}
\ge
m(E)^{1/s-1}\|f\|_{L^1(E)},\qquad
1\le s\le p.
\]

%------------------------------------------------
\subsection{Localized lacunary subspaces}
\label{subsec:lacunary}
%------------------------------------------------

The following lemma gives a lower \(L^2(E)\)-estimate for every
lacunary sequence after finitely many initial terms are removed.

\begin{lemma}
\label{lem:lacunary-set}
Let \(E\subset\T\) be measurable with \(m(E)>0\), and let \((n_k)\) be
a sequence of positive integers such that
\(n_{k+1}/n_k\ge\lambda>1\) for all \(k\ge1\). Then there exists
\(N=N(E,\lambda)\ge1\) such that
\begin{equation}
\label{eq:lacunary-set}
        \int_E
        \left|
        \sum_{k\ge N}a_k\zeta^{n_k}
        \right|^2\,dm(\zeta)
        \ge
        \frac{m(E)}{2}
        \sum_{k\ge N}|a_k|^2
\end{equation}
for every \((a_k)_{k\ge N}\in\ell^2\), where the series is understood
as its \(H^2\)-limit.
\end{lemma}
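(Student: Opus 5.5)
Expanding the square reduces \eqref{eq:lacunary-set} to bounding an off-diagonal Hermitian form, which we control by a Hilbert--Schmidt norm that is the tail of a convergent series. Put \(\chi=\mathbf 1_E\) and let \(\widehat\chi(n)=\int_\T\chi(\zeta)\bar\zeta^{\,n}\,dm(\zeta)\). For a finitely supported sequence \((a_k)_{k\ge N}\),
\[
\int_E\Bigl|\sum_{k\ge N}a_k\zeta^{n_k}\Bigr|^2dm
=m(E)\sum_{k\ge N}|a_k|^2+\sum_{\substack{j,k\ge N\\ j\ne k}}a_j\overline{a_k}\,\widehat\chi(n_k-n_j).
\]
It therefore suffices to choose \(N\) so that the off-diagonal form has operator norm at most \(m(E)/2\) on \(\ell^2(\{k\ge N\})\). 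The extension to general \((a_k)\in\ell^2\) then follows by density: both sides of \eqref{eq:lacunary-set} are continuous in the \(\ell^2\)-norm, because \(\sum a_k\zeta^{n_k}\) converges in \(H^2\) and hence in \(L^2(E)\).

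The off-diagonal form has matrix \(B_{jk}=\widehat\chi(n_k-n_j)\), and we bound its operator norm by the Hilbert--Schmidt norm
\[
\|B\|_{HS}^2=\sum_{\substack{j,k\ge N\\ j\ne k}}|\widehat\chi(n_k-n_j)|^2 .
\]
The key combinatorial fact is that lacunarity makes the differences \(n_k-n_j\) (\(j\ne k\)) have bounded multiplicity. Each integer \(m\ne0\) is represented as \(n_k-n_j\) at most \(C(\lambda)\) times. To see this, suppose \(k>j\) and \(n_k-n_j=m>0\). Then \(n_k\ge m\) and \(n_k\le m+n_j\le m+n_k/\lambda\), so \(n_k\le \lambda m/(\lambda-1)\). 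Since \(n_k\) ranges over a lacunary set, the inequalities \(m\le n_k\le \lambda m/(\lambda-1)\) allow only \(O_\lambda(1)\) values of \(k\). Moreover, \(j\) is determined by \(k\) and \(m\), because the \(n_j\) are distinct. Negative \(m\) is handled symmetrically.

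Hence
\[
\|B\|_{HS}^2\le 2C(\lambda)\sum_{|m|\ge d_N}|\widehat\chi(m)|^2,
\qquad d_N:=\min_{k>j\ge N}(n_k-n_j)\ge n_{j+1}-n_j\ge(\lambda-1)n_N\to\infty .
\]
By Parseval, \(\sum_m|\widehat\chi(m)|^2=m(E)<\infty\), so this tail tends to \(0\) as \(N\to\infty\). We pick \(N\) with \(2C(\lambda)\sum_{|m|\ge d_N}|\widehat\chi(m)|^2\le m(E)^2/4\). Then \(|\langle Ba,a\rangle|\le\|B\|_{HS}\|a\|^2\le \tfrac{m(E)}{2}\|a\|^2\), which gives \eqref{eq:lacunary-set}.

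The main obstacle is the multiplicity bound. It is genuinely needed, since the Parseval tail alone does not control a sum over pairs. It must also be checked carefully that \(d_N\) grows, which requires summing only over pairs with both indices at least \(N\), where \(n_j\ge n_N\). The dependence of \(N\) on \(E\) enters through the Fourier tail of \(\mathbf 1_E\), not only through \(m(E)\); the lemma permits this, since it asserts \(N=N(E,\lambda)\).
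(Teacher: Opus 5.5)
Your proof is correct, and it takes a genuinely different route from the paper's.

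The paper's argument runs as follows:
\begin{itemize}
\item It approximates \(\mathbf 1_E\) in \(L^2(\T)\) by a real trigonometric polynomial \(P\) of degree \(d\) with \(\widehat P(0)=m(E)\).
\item It chooses \(N\) so that every gap \(|n_j-n_k|\) with \(j\ne k\ge N\) exceeds \(d\). This makes \(\int_\T P|f|^2\,dm=m(E)\sum_k|b_k|^2\) hold exactly.
\item It bounds the error \(\int_\T(\mathbf 1_E-P)|f|^2\,dm\) by Cauchy--Schwarz together with the Paley--Zygmund estimate \(\|f\|_{L^4}\le B_{4,\lambda}\|(b_k)\|_{\ell^2}\) from Lemma~\ref{lem:paley}.
\end{itemize}

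You stay entirely in \(L^2\). Each \(m\ne0\) has at most \(1+\log_\lambda\frac{\lambda}{\lambda-1}\) representations \(m=n_k-n_j\) of a given sign. This bounded multiplicity lets you dominate the Hilbert--Schmidt norm of the off-diagonal matrix by a constant times the Parseval tail \(\sum_{|m|\ge d_N}|\widehat{\mathbf 1_E}(m)|^2\).

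Your version is self-contained and avoids citing Zygmund's theorem. It also gives an explicit criterion for \(N\) in terms of the Fourier tail of \(\mathbf 1_E\). In fact your multiplicity count is the combinatorial core of the \(L^4\) estimate the paper invokes: writing \(\|f\|_{L^4}^4=\sum_m\bigl|\sum_{n_k-n_j=m}a_k\overline{a_j}\bigr|^2\), bounded multiplicity gives \(\|f\|_{L^4}^4\le(1+C(\lambda))\|(a_k)\|_{\ell^2}^4\). The two proofs are therefore closer than they look. The paper's version has the practical advantage of reusing Lemma~\ref{lem:paley}, which it needs elsewhere anyway, for instance for the constant \(B_{4,2}\) in Corollary~\ref{cor:lacunary-tail}.

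One small point. As written, your choice of \(N\) depends on the particular sequence through \(n_N\). To obtain \(N=N(E,\lambda)\) as the statement asserts, use the uniform bound \(d_N\ge(\lambda-1)n_N\ge(\lambda-1)\lambda^{N-1}\), valid since \(n_1\ge1\); this is exactly what the paper does.
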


\begin{proof}
Let \(B_{4,\lambda}\) be the upper constant in
Lemma~\ref{lem:paley} for exponent \(4\), and choose
\(0<\varepsilon<m(E)/(2B_{4,\lambda}^2)\). Put \(h=\mathbf1_E-m(E)\). Then \(h\) is real-valued and
\(\widehat h(0)=0\). Since the trigonometric system is complete in
\(L^2(\T)\), the symmetric Fourier partial sums
\[
        Q_d(\zeta)
        =
        \sum_{0<|n|\le d}\widehat h(n)\zeta^n
\]
converge to \(h\) in \(L^2(\T)\). Hence, for some \(d\), \(Q:=Q_d\)
satisfies
\(\|h-Q\|_{L^2(\T)}<\varepsilon\). Moreover,
\(\widehat Q(0)=0\), and \(Q\) is real-valued because
\(\widehat h(-n)=\overline{\widehat h(n)}\). Set
\(P=m(E)+Q\). Then \(\widehat P(0)=m(E)\) and
\[\|\mathbf1_E-P\|_{L^2(\T)}<\varepsilon.\]

Since \(n_k\ge\lambda^{k-1}\), we may choose \(N\) so that
\((\lambda-1)\lambda^{N-1}>d\). If \(j>k\ge N\), then
\[
        n_j-n_k
        \ge
        n_{k+1}-n_k
        \ge
        (\lambda-1)n_k
        \ge
        (\lambda-1)\lambda^{k-1}
        >
        d.
\]
Thus \(|n_j-n_k|>d\) whenever \(j\ne k\) and \(j,k\ge N\).

Let \(f(\zeta)=\sum\limits_{k\ge N}b_k\zeta^{n_k}\), where \((b_k)\) is
finitely supported. Since \(P\) has degree at most \(d\) and
\(|n_j-n_k|>d\) for \(j\ne k\), it follows that
\(\widehat P(n_k-n_j)=0\) whenever \(j\ne k\). Hence
\[
        \int_\T P|f|^2\,dm
        =
        \sum_{j,k\ge N}
        b_j\overline{b_k}\widehat P(n_k-n_j)
        =
        m(E)\sum_{k\ge N}|b_k|^2.
\]
Moreover, Cauchy--Schwarz and
Lemma~\ref{lem:paley} give
\[
\begin{aligned}
        \left|
        \int_\T(\mathbf1_E-P)|f|^2\,dm
        \right|
        &\le
        \|\mathbf1_E-P\|_{L^2(\T)}
        \|f\|_{L^4(\T)}^2                                      \\
        &\le
        \varepsilon B_{4,\lambda}^2
        \sum_{k\ge N}|b_k|^2.
\end{aligned}
\]
Consequently,
\[
        \int_E|f|^2\,dm
        \ge
        \bigl(m(E)-\varepsilon B_{4,\lambda}^2\bigr)
        \sum_{k\ge N}|b_k|^2
        \ge
        \frac{m(E)}{2}
        \sum_{k\ge N}|b_k|^2.
\]
Now let \((a_k)_{k\ge N}\in\ell^2\), and put
\(
f_M(\zeta)
=
\sum\limits_{k=N}^M a_k\zeta^{n_k}.
\)
Then \(f_M\) converges in \(H^2\) to
\(
 f(\zeta)
=
\sum\limits_{k\ge N}a_k\zeta^{n_k}.
\)
In particular, \(f_M\to f\) in \(L^2(E)\). Applying the preceding
estimate to \(f_M\) and passing to the limit, we obtain
\[
        \int_E|f|^2\,dm
        \ge
        \frac{m(E)}{2}
        \sum_{k\ge N}|a_k|^2.
\]
This proves \eqref{eq:lacunary-set}.
\end{proof}

The following corollary proves the case \(r=2\) of Theorem~B. The
resulting lower bound depends on \(E\) only through \(m(E)\), although
the starting index \(N\) may depend on \(E\) itself.

\begin{corollary}
\label{cor:lacunary-tail}
Let \(1\le p<2\), let \(E\subset\T\) be measurable, and set
\(\delta=m(E)>0\). There exists an integer \(N=N(E)\ge1\) such that
\[
        M_{2,E}
        :=
        \overline{\operatorname{span}}^{\,H^p}
        \{z^{2^k}:k\ge N\}
\]
is isomorphic to \(\ell^2\), and hence to \(L^2(0,1)\). Moreover,
for every \(1\le s\le p\),
\begin{equation}
\label{eq:lacunary-tail}
        \|f\|_{L^s(E)}
        \ge
        D_p\delta^{\,1/2+1/s}\|f\|_{H^p},
        \qquad
        f\in M_{2,E},
\end{equation}
where
\(D_p=2^{-3/2}B_{4,2}^{-2}B_{p,2}^{-1}\).
\end{corollary}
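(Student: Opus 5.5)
We take \(n_k=2^k\), so the lacunarity constant is \(\lambda=2\), and let \(N=N(E,2)\) be the index given by Lemma~\ref{lem:lacunary-set}. The plan is to show that \(M_{2,E}\) consists of lacunary series with \(\ell^2\) coefficients. We then obtain the lower \(L^1(E)\)-estimate by interpolating between the lower \(L^2(E)\)-bound of Lemma~\ref{lem:lacunary-set} and the upper \(L^4\)-bound of Lemma~\ref{lem:paley}. Finally we pass to \(L^s(E)\) using the H\"older remark made at the beginning of this section.

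\emph{Identification with \(\ell^2\).} For finitely supported \((a_k)_{k\ge N}\), put \(f=\sum_{k\ge N}a_kz^{2^k}\) and \(S=(\sum_k|a_k|^2)^{1/2}\). Lemma~\ref{lem:paley} with \(s=p\) gives \(A_{p,2}S\le\|f\|_{H^p}\le B_{p,2}S\). Hence the map \((a_k)\mapsto\sum_k a_kz^{2^k}\) extends from finitely supported sequences to an isomorphism of \(\ell^2\) onto the \(H^p\)-closure of the span. Therefore \(M_{2,E}\simeq\ell^2\simeq L^2(0,1)\), the last isomorphism coming from any orthonormal basis. The same lemma with \(s=2\) and \(s=4\) shows that every \(f\in M_{2,E}\) is also the \(H^2\)- and \(H^4\)-limit of its partial sums. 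So Lemma~\ref{lem:lacunary-set} applies to \(f\), and \(\|f\|_{L^4(\T)}\le B_{4,2}S\) holds for all \(f\in M_{2,E}\), not only for finite sums.

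\emph{Lower \(L^1(E)\)-estimate.} H\"older's inequality with weights \(1/3\) and \(2/3\) (since \(\tfrac12=\tfrac{1/3}{1}+\tfrac{2/3}{4}\)) gives
\[
\|f\|_{L^2(E)}\le\|f\|_{L^1(E)}^{1/3}\|f\|_{L^4(E)}^{2/3}.
\]
Rearranging, and using \(\|f\|_{L^2(E)}\ge(\delta/2)^{1/2}S\) from Lemma~\ref{lem:lacunary-set} together with \(\|f\|_{L^4(E)}\le\|f\|_{L^4(\T)}\le B_{4,2}S\), we get
\[
\|f\|_{L^1(E)}\ge\frac{\|f\|_{L^2(E)}^3}{\|f\|_{L^4(E)}^2}
\ge\frac{(\delta/2)^{3/2}S^3}{B_{4,2}^2S^2}
=2^{-3/2}\delta^{3/2}B_{4,2}^{-2}S .
\]
Combining this with \(S\ge B_{p,2}^{-1}\|f\|_{H^p}\) yields
\[
\|f\|_{L^1(E)}\ge 2^{-3/2}B_{4,2}^{-2}B_{p,2}^{-1}\,\delta^{3/2}\|f\|_{H^p}.
\]
The case \(f=0\) is trivial, so there is no division issue.

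\emph{Passage to \(L^s(E)\).} For \(1\le s\le p\), H\"older's inequality gives \(\|f\|_{L^s(E)}\ge\delta^{1/s-1}\|f\|_{L^1(E)}\). Multiplying \(\delta^{3/2}\) by \(\delta^{1/s-1}\) gives \(\delta^{1/2+1/s}\), which is \eqref{eq:lacunary-tail} with \(D_p=2^{-3/2}B_{4,2}^{-2}B_{p,2}^{-1}\). The only nonroutine step is the interpolation. The \(L^2(E)\)-lower bound alone does not control \(L^1(E)\), and the uniform \(L^4\)-bound coming from lacunarity is what makes the transfer possible with constants depending only on \(\delta\).
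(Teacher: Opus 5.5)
Your proposal is correct and follows the paper's proof essentially step for step: the same choice of \(N\) from the lacunary-set lemma with \(n_k=2^k\), the same \(\tfrac13\)--\(\tfrac23\) H\"older interpolation between the lower \(L^2(E)\) bound and the Paley upper \(L^4\) bound, and the same H\"older passage to \(L^s(E)\), which yields the same constant \(D_p\). The only difference is cosmetic. You apply the estimates directly to arbitrary elements of \(M_{2,E}\), using \(H^2\)/\(H^4\) convergence of their partial sums. The paper instead proves the bound for lacunary polynomials and then extends it by density, using the boundedness of the restriction map \(H^p\to L^s(E)\) for \(s\le p\).
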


\begin{proof}
Choose \(N\) as in Lemma~\ref{lem:lacunary-set} for the
dyadic sequence \(n_k=2^k\). Let
\(f=\sum\limits_{k\ge N}a_kz^{2^k}\) be a polynomial, and set
\(A=(\sum\limits_{k\ge N}|a_k|^2)^{1/2}\). Since
\(\|f\|_{L^4(E)}\le\|f\|_{H^4}\le B_{4,2}A\),
Lemma~\ref{lem:lacunary-set} and H\"older's inequality on \(E\)
give
\[
\begin{aligned}
        \left(\frac{\delta}{2}\right)^{1/2}A
        &\le
        \|f\|_{L^2(E)}
        \le
        \|f\|_{L^1(E)}^{1/3}\|f\|_{L^4(E)}^{2/3} \\
        &\le
        B_{4,2}^{\,2/3}\|f\|_{L^1(E)}^{1/3}A^{2/3}.
\end{aligned}
\]
It follows that
\(\|f\|_{L^1(E)}
\ge2^{-3/2}\delta^{3/2}B_{4,2}^{-2}A\).
Since Lemma~\ref{lem:paley} also gives
\(\|f\|_{H^p}\le B_{p,2}A\), we obtain
\[
        \|f\|_{L^1(E)}
        \ge
        D_p\delta^{3/2}\|f\|_{H^p}.
\]

For \(1\le s\le p\), H\"older's inequality on \(E\) gives
\[
        \|f\|_{L^s(E)}
        \ge
        \delta^{\,1/s-1}\|f\|_{L^1(E)}
        \ge
        D_p\delta^{\,1/2+1/s}\|f\|_{H^p}.
\]
Since \(s\le p\), H\"older's inequality shows that the boundary
restriction map \(H^p\to L^s(E)\) is bounded. As the lacunary
polynomials are dense in \(M_{2,E}\), passing to the limit gives
\eqref{eq:lacunary-tail} for every \(f\in M_{2,E}\).

Finally, Lemma~\ref{lem:paley} shows that
\((z^{2^k})_{k\ge N}\) is equivalent in \(H^p\) to the canonical basis
of \(\ell^2\). Therefore its closed linear span \(M_{2,E}\) is
isomorphic to \(\ell^2\), and hence to \(L^2(0,1)\).
\end{proof}

The tail estimate in Lemma~\ref{lem:lacunary-set} extends to
the full lacunary series for every \(0<s<\infty\).

\begin{proposition}
\label{prop:lacunary-local}
Let \(0<s<\infty\), let \((n_k)\) be a sequence of positive integers
satisfying \(n_{k+1}/n_k\ge\lambda>1\), and let \(E\subset\T\) be
measurable with \(m(E)>0\). Then there exists a constant
\(C=C(s,E,(n_k))>0\) such that
\begin{equation}
\label{eq:local-full}
        \left\|
        \sum_{k\ge1}a_k\zeta^{n_k}
        \right\|_{L^s(E)}
        \ge
        C\left(\sum_{k\ge1}|a_k|^2\right)^{1/2}
\end{equation}
for every \((a_k)_{k\ge1}\in\ell^2\), where the series on the left is
understood as the limit of its partial sums in \(H^s\).
\end{proposition}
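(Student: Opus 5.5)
The plan is to reduce \eqref{eq:local-full} to the tail estimate of Lemma~\ref{lem:lacunary-set}, and then to handle the finitely many initial terms by a compactness and uniqueness argument.

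\emph{Tail estimate.} Choose \(N=N(E,\lambda)\) as in Lemma~\ref{lem:lacunary-set}. The tail subspace is then
\[
T_N=\overline{\operatorname{span}}\{\zeta^{n_k}:k\ge N\}.
\]
On \(T_N\) we have \(\|f\|_{L^2(E)}\gtrsim\|(a_k)\|_{\ell^2}\). To pass from \(L^2(E)\) to \(L^s(E)\) we interpolate. For \(s<2\), Hölder (log-convexity) gives \(\|f\|_{L^2(E)}\le\|f\|_{L^s(E)}^{\theta}\|f\|_{L^4(E)}^{1-\theta}\) for suitable \(\theta\in(0,1)\). Combining this with the upper bound \(\|f\|_{L^4}\le B_{4,\lambda}\|a\|_{\ell^2}\) from Lemma~\ref{lem:paley}, exactly as in Corollary~\ref{cor:lacunary-tail}, yields \(\|f\|_{L^s(E)}\gtrsim\|a\|_{\ell^2}\). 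This also works for \(0<s<1\), since log-convexity of \(L^p\) quasi-norms holds for all exponents. For \(s\ge2\), we use \(\|f\|_{L^s(E)}\ge m(E)^{1/s-1/2}\|f\|_{L^2(E)}\) directly. Hence the map
\[
R:\ell^2\to L^s(E),\qquad (a_k)\mapsto\sum a_k\zeta^{n_k}\big|_E,
\]
is bounded (by Lemma~\ref{lem:paley} for \(s\ge1\); for \(s<1\) use the \(L^2\) bound and Hölder) and bounded below on the tail \(\ell^2_{\ge N}\).

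\emph{Injectivity.} It suffices to show that \(R\) is injective on all of \(\ell^2\). Suppose \(f=\sum a_k\zeta^{n_k}\in H^2\) vanishes on \(E\), a set of positive measure. Since \(f\in H^2\) and \(\log|f^*|\) would equal \(-\infty\) on a set of positive measure, the F.~and M.~Riesz/Privalov uniqueness theorem gives \(f\equiv0\), so all \(a_k=0\).

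\emph{Combining.} Write \(\ell^2=F\oplus\ell^2_{\ge N}\) with \(F\) finite-dimensional. Then \(R\) is the sum of an operator bounded below on a finite-codimensional subspace and a finite-rank perturbation. For \(s\ge1\), standard Fredholm/semi-Fredholm theory shows that \(R\) has closed range and finite-dimensional kernel. Together with injectivity, this makes \(R\) bounded below, which is \eqref{eq:local-full}.

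For \(0<s<1\), the target is only a quasi-Banach space, so I would instead argue by contradiction. Take unit vectors \(a^{(j)}\) with \(\|Ra^{(j)}\|_{L^s(E)}\to0\), and split \(a^{(j)}=u^{(j)}+v^{(j)}\) into head and tail parts. Passing to a subsequence, \(u^{(j)}\to u\) in the finite-dimensional head. The quasi-triangle inequality and boundedness of \(R\) on \(F\) then give \(\|Rv^{(j)}+Ru\|_{L^s(E)}\to0\). Hence \(Rv^{(j)}\) is Cauchy in \(L^s(E)\), and the tail lower bound makes \(v^{(j)}\) Cauchy in \(\ell^2\). So \(a^{(j)}\to a\) with \(\|a\|=1\) and \(Ra=0\), contradicting injectivity. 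This contradiction argument in fact works uniformly for all \(0<s<\infty\), so I would present it in that form.

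\emph{Main obstacle.} The delicate point is the uniqueness step together with the fact that the constant cannot be made explicit. Injectivity needs Lusin--Privalov uniqueness for \(H^2\) functions vanishing on a positive-measure boundary set, applied to the \(H^2\)-limit series. This also explains why \(C\) depends on \(E\) and \((n_k)\) themselves, and not only on \(m(E)\) and \(\lambda\).
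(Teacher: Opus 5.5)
Your proof is correct, but it uses the same ingredients in a different order from the paper.

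The paper first proves the \emph{full} estimate at the single exponent \(s=2\). It combines the tail bound of Lemma~\ref{lem:lacunary-set}, the finite-dimensionality of the head, and boundary uniqueness to show that the restriction map into \(L^2(E)\) is injective with closed range. It then applies the bounded inverse theorem. Only afterwards does it pass to other exponents by H\"older's inequality: directly for \(s>2\), and for \(0<s<2\) by interpolating between \(L^s(E)\) and \(L^t(E)\) with \(t>2\). That interpolation step uses the full \(L^2(E)\) lower bound together with the Paley upper bound in \(H^t\).

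You instead first transfer the \emph{tail} bound to every \(L^s(E)\). You then run the head/tail perturbation argument in \(L^s(E)\) itself, replacing the bounded inverse theorem by a sequential compactness contradiction.

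The paper's ordering is more economical. Its only non-constructive step is carried out once, in a Hilbert space. The quasi-Banach range \(0<s<1\) is then handled by an elementary H\"older inequality, and the constant for each \(s\) is expressed explicitly through the \(s=2\) constant \(c_2\). Your ordering requires checking that the contradiction argument survives the quasi-triangle inequality, which it does, as you verify. In exchange it is self-contained and needs only compactness in the finite-dimensional head and completeness of \(\ell^2\), not the open mapping theorem.

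In both versions the constant \(C(s,E,(n_k))\) is non-explicit for the same reason: the boundary uniqueness step.
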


\begin{proof}
Suppose first that \(s=2\).  Choose \(N\ge1\) as in
Lemma~\ref{lem:lacunary-set}, and put
\[
        X
        :=
        \overline{\operatorname{span}}^{\,H^2}
        \{z^{n_k}:k\ge1\},
        \qquad
        Y
        :=
        \overline{\operatorname{span}}^{\,H^2}
        \{z^{n_k}:k\ge N\},
\]
and
\[
        F
        :=
        \operatorname{span}
        \{z^{n_1},\ldots,z^{n_{N-1}}\},
\]
where \(F=\{0\}\) if \(N=1\). Then
\(
        X=F\oplus Y.
\)

Let
\[
        R_E:X\longrightarrow L^2(E),
        \qquad
        R_Ef=f^*|_E,
\]
be the restriction operator. Since the monomials are orthonormal in
\(H^2\), Lemma~\ref{lem:lacunary-set} gives
\[
        \|R_Ef\|_{L^2(E)}
        \ge
        \left(\frac{m(E)}{2}\right)^{1/2}
        \|f\|_{H^2},
        \qquad
        f\in Y.
\]
Hence \(R_E(Y)\) is closed in \(L^2(E)\). Moreover, \(R_E(F)\) is
finite-dimensional, and therefore
\[
        R_E(X)
        =
        R_E(F)+R_E(Y)
\]
is also closed in \(L^2(E)\).

We next show that \(R_E\) is injective. If \(f\in X\) and \(R_Ef=0\),
then \(f^*=0\) almost everywhere on \(E\). Since \(m(E)>0\), the
boundary uniqueness theorem gives \(f=0\); see
\cite[Ch.~II, Theorem~2.2]{Duren1970}. Thus
\(
        R_E:X\longrightarrow R_E(X)
\)
is a bounded bijection between Banach spaces. By the bounded inverse
theorem, there exists \(c_2>0\) such that
\[
        \|f\|_{L^2(E)}
        \ge
        c_2\|f\|_{H^2},
        \qquad
        f\in X.
\]
By the orthonormality of \((z^{n_k})_{k\ge1}\) in \(H^2\), this proves
\eqref{eq:local-full} for \(s=2\).

Suppose next that \(2<s<\infty\). For
\((a_k)_{k\ge1}\in\ell^2\), put
\(
        f(z)=\sum_{k\ge1}a_kz^{n_k}.
\)
By Lemma~\ref{lem:paley}, \(f\in H^s\). Since \(m(E)<\infty\),
H\"older's inequality gives
\[
        \|f\|_{L^2(E)}
        \le
        m(E)^{1/2-1/s}\|f\|_{L^s(E)}.
\]
Hence, by the case \(s=2\),
\[
\begin{aligned}
        \|f\|_{L^s(E)}
        &\ge
        m(E)^{1/s-1/2}\|f\|_{L^2(E)}\\
        &\ge
        c_2\,m(E)^{1/s-1/2}
        \left(\sum_{k\ge1}|a_k|^2\right)^{1/2}.
\end{aligned}
\]
Thus \eqref{eq:local-full} holds for \(2<s<\infty\).

Now let \(0<s<2\), and choose \(t>2\) and \(0<\theta<1\) so that
\(\frac{1}{2}=\frac{\theta}{s}+\frac{1-\theta}{t}\). For \((a_k)\in\ell^2\), put
\(f(z)=\sum_{k\ge1}a_kz^{n_k}\) and
\(A=(\sum_{k\ge1}|a_k|^2)^{1/2}\). By
Lemma~\ref{lem:paley}, the series defining \(f\) converges in
\(H^t\), and
\[
        \|f\|_{L^t(E)}
        \le
        \|f\|_{H^t}
        \le
        B_{t,\lambda}A.
\]
H\"older's inequality on \(E\), together with the case \(s=2\), gives
\[
\begin{aligned}
        c_2A
        &\le
        \|f\|_{L^2(E)} 
        \le
        \|f\|_{L^s(E)}^\theta
        \|f\|_{L^t(E)}^{1-\theta} \\
        &\le
        B_{t,\lambda}^{\,1-\theta}
        \|f\|_{L^s(E)}^\theta A^{1-\theta}.
\end{aligned}
\]
If \(A>0\), rearranging yields
\[
        \|f\|_{L^s(E)}
        \ge
        c_2^{1/\theta}
        B_{t,\lambda}^{-(1-\theta)/\theta}
        \left(\sum_{k\ge1}|a_k|^2\right)^{1/2}.
\]
The case \(A=0\) is immediate. This proves
\eqref{eq:local-full} for \(0<s<2\). 
\end{proof}

\begin{corollary}
\label{cor:lacunary-set}
Let \(1\le p<\infty\), and let \(E\subset\T\) be measurable with
\(m(E)>0\). Then
\[
        M_2
        :=
        \overline{\operatorname{span}}^{\,H^p}
        \{z^{2^k}:k\ge1\}
\]
is isomorphic to \(\ell^2\). Moreover, for every \(1\le s\le p\),
there exists \(c_{p,s,E}>0\) such that
\[
        \|f\|_{L^s(E)}
        \ge
        c_{p,s,E}\|f\|_{H^p},
        \qquad
        f\in M_2.
\]
\end{corollary}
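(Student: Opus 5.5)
The plan is to derive Corollary~\ref{cor:lacunary-set} directly from Lemma~\ref{lem:paley} and Proposition~\ref{prop:lacunary-local}, applied to the dyadic sequence \(n_k=2^k\) with \(\lambda=2\).

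\emph{Step 1: the isomorphism with \(\ell^2\).} By Lemma~\ref{lem:paley} with \(s=p\), the sequence \((z^{2^k})_{k\ge1}\) is equivalent in \(H^p\) to the unit vector basis of \(\ell^2\), with constants \(A_{p,2}\) and \(B_{p,2}\). Hence the map \((a_k)\mapsto\sum_k a_kz^{2^k}\) extends to an isomorphism of \(\ell^2\) onto the closed span \(M_2\). Every \(f\in M_2\) is therefore the \(H^p\)-limit of its lacunary partial sums, and it satisfies
\[
\|f\|_{H^p}\le B_{p,2}\Bigl(\sum_k|a_k|^2\Bigr)^{1/2}.
\]

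\emph{Step 2: the lower estimate.} Fix \(1\le s\le p\). Proposition~\ref{prop:lacunary-local} with this \(s\) gives \(C=C(s,E,(2^k))>0\) such that
\[
\Bigl\|\sum_k a_k\zeta^{2^k}\Bigr\|_{L^s(E)}\ge C\Bigl(\sum_k|a_k|^2\Bigr)^{1/2},
\]
where the series is understood as an \(H^s\)-limit. We must check that this limit agrees with the element \(f\in M_2\) obtained as an \(H^p\)-limit. Since \(s\le p\), the inclusion \(H^p\hookrightarrow H^s\) is bounded, so \(H^p\)-convergence of the partial sums implies \(H^s\)-convergence to the same function. Consequently the boundary values on \(E\) coincide almost everywhere.

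Combining this with Step~1 yields
\[
\|f\|_{L^s(E)}\ge C\,B_{p,2}^{-1}\|f\|_{H^p},
\]
so one may take \(c_{p,s,E}=C(s,E,(2^k))/B_{p,2}\).

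The only step requiring care is this identification of the two limits, and it is routine.
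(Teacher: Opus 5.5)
Your proposal is correct and follows the paper's proof. Lemma~\ref{lem:paley} gives $M_2\simeq\ell^2$ and the bound $\|f\|_{H^p}\le B_{p,2}\|(a_k)\|_{\ell^2}$, and Proposition~\ref{prop:lacunary-local} gives the lower bound, with the same constant $c_{p,s,E}=C(s,E,(2^k))\,B_{p,2}^{-1}$. The only cosmetic difference is how the limit is handled: the paper proves the estimate for finite dyadic polynomials and extends by density, using boundedness of $H^p\to L^s(E)$ for $s\le p$, whereas you apply the $\ell^2$ form of the proposition directly and identify the $H^s$- and $H^p$-limits through the inclusion $H^p\hookrightarrow H^s$.
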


\begin{proof}
By Lemma~\ref{lem:paley},
\((z^{2^k})_{k\ge1}\) is equivalent in \(H^p\) to the canonical basis
of \(\ell^2\). So \(M_2\simeq\ell^2\).  For every finite dyadic
polynomial \(f=\sum_k a_kz^{2^k}\), Proposition~
\ref{prop:lacunary-local} and
Lemma~\ref{lem:paley} give
\[
        \|f\|_{L^s(E)}
        \ge
        C_{s,E}\|(a_k)\|_{\ell^2}
        \ge
        C_{s,E}B_{p,2}^{-1}\|f\|_{H^p}.
\]
The conclusion follows for all \(f\in M_2\) by density, with
\(c_{p,s,E}=C_{s,E}B_{p,2}^{-1}\).
\end{proof}

\begin{remark}
Unlike the \(E\)-adapted space \(M_{2,E}\) introduced in
Corollary~\ref{cor:lacunary-tail}, the dyadic lacunary
space \(M_2\) defined above is independent of the set \(E\). The
corresponding lower bound constant may depend on \(E\) itself, rather
than only on \(m(E)\). We shall use this fixed space below in the
off-diagonal case \(q<p\) when \(p\ge2\).
\end{remark}

%------------------------------------------------
\subsection{Stable integrals and analytic lifting}
\label{subsec:stable-lift}
%------------------------------------------------

For a finite measure space \((X,\mathcal A,\mu)\), let
\(L^0(X;\R)\) denote the space of real-valued measurable functions
that are finite almost everywhere, modulo equality almost everywhere.

We first record the distribution-preserving transfer on \(L^0\)
induced by a measure algebra isomorphism.

\begin{lemma}
\label{lem:L0-transfer}
Let \((\Omega,\mathcal F,\mathbb P)\) be a probability space, let
\(\lambda\) denote Lebesgue measure on \((0,1)\), and let
\(\mathcal L\) be the Lebesgue \(\sigma\)-algebra. Suppose that there
is a measure-preserving Boolean \(\sigma\)-isomorphism
\[
        \Phi:
        \mathcal F/\mathcal N_{\mathbb P}
        \longrightarrow
        \mathcal L/\mathcal N_\lambda,
\]
where \(\mathcal N_{\mathbb P}\) and \(\mathcal N_\lambda\) are the
corresponding null ideals. Then there exists a real linear map
\[
        U:L^0(\Omega;\R)\longrightarrow L^0(0,1;\R)
\]
such that \(Uf\) and \(f\) have the same distribution for every
\(f\in L^0(\Omega;\R)\).
\end{lemma}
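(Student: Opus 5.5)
The plan is to build \(U\) first on simple functions and then extend it to all of \(L^0\) by monotone approximation and a.e. limits. Convergence in probability gives no control over which limit we get, so a.e. convergence along a fixed approximating scheme is the safer route.

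\emph{Simple functions.} Let \(\mathcal S(\Omega)\) be the real simple functions \(f=\sum_{i}a_i\mathbf 1_{A_i}\), with \(A_i\in\mathcal F\) pairwise disjoint. For each \(A\in\mathcal F\), pick a representative \(\widetilde A\) of \(\Phi([A])\), and set
\[
Uf:=\sum_i a_i\mathbf 1_{\widetilde A_i}.
\]
This is well defined modulo \(\lambda\)-null sets because \(\Phi\) is a Boolean isomorphism. It preserves finite disjoint unions, so the value does not depend on the chosen partition, and passing to common refinements shows that \(U\) is real linear on \(\mathcal S(\Omega)\).

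Since \(\Phi\) preserves measure and disjointness, \(\lambda\{Uf=a\}=\mathbb P\{f=a\}\) for each value \(a\). So \(Uf\) and \(f\) have the same distribution.

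\emph{Lattice and order properties.} \(U\) commutes with \(\max\) and \(\min\) on simple functions, since it is computed atom by atom on a common refinement. In particular \(U\) is positive and monotone.

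The key extra fact is that \(\Phi\) preserves countable meets and joins, being a \(\sigma\)-isomorphism. It follows that if \(A_n\downarrow\) with \(\mathbb P(\bigcap A_n)=0\), then \(\lambda(\bigcap\widetilde A_n)=0\).

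\emph{Extension to \(L^0\).} For measurable \(f\), let \(f_n\) be the standard dyadic truncations. These are simple, \(f_n\to f\) pointwise, and each \(f_n\) is a fixed function of \(f\) through level sets \(\{f\in B\}\).

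I would show that \(Uf_n\) converges \(\lambda\)-a.e. For \(m,n\ge N\), the set \(\{|f_n-f_m|>\varepsilon\}\) is a finite union of level sets. Moreover, \(\{|Uf_n-Uf_m|>\varepsilon\}\) equals the image under \(\Phi\) of \(\{|f_n-f_m|>\varepsilon\}\), because \(U\) acts on level sets via \(\Phi\). The set \(\{\limsup_{m,n}|f_n-f_m|>\varepsilon\}\) is built from these by countable unions and intersections, and it is null. Since \(\Phi\) commutes with countable operations, its image is also null, so \((Uf_n)\) is a.e. Cauchy. Define \(Uf:=\lim Uf_n\).

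Equality of distributions passes to the limit: \(f_n\to f\) and \(Uf_n\to Uf\) a.e., and \(f_n\) and \(Uf_n\) have the same distribution for every \(n\).

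\emph{Linearity.} This is the main obstacle. The truncations of \(f+g\) are not \(f_n+g_n\), so linearity does not follow directly from the construction. The cleanest fix is a consistency statement: \(Uh\) agrees with \(\lim Uh_n\) for \emph{every} sequence of simple \(h_n\to h\) a.e., not just the dyadic one.

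To prove it, I would transport the \(\sigma\)-algebra \(\mathcal F_h\) generated by the countably many relevant simple functions via \(\Phi\), which gives a \(\sigma\)-homomorphism. By Sikorski's representation, or directly via the countable generating family, this \(\sigma\)-homomorphism is induced by a point map \(T\). Then \(Uh=h\circ T\) a.e. on \(\mathcal F_h\)-measurable functions.

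Taking \(\mathcal F_{f,g}\) generated by \(f\) and \(g\) together, we get \(U(f+g)=(f+g)\circ T=f\circ T+g\circ T=Uf+Ug\), and homogeneity follows similarly. This point-map realization for countably generated sub-\(\sigma\)-algebras with real-valued functions is the step I expect to require the most care.
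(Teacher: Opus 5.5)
Your construction on simple functions, the a.e.-Cauchy argument for the dyadic truncations (using that $\Phi$ commutes with countable meets and joins), and the passage of distributions to the limit are all sound. The gap is in the linearity step. You assert that the $\sigma$-homomorphism $A\mapsto\Phi([A])$ on a countably generated $\mathcal F_h$ is induced by a point map $T:(0,1)\to\Omega$. That is false for a general probability space. For example, take $\Omega=X\subset(0,1)$ with inner measure $0$ and outer measure $1$, $\mathcal F=\{B\cap X:B\in\mathcal L\}$, $\mathbb P(B\cap X)=\lambda(B)$, and $\Phi([B\cap X])=[B]$. For $h(x)=x$, a measurable $T:(0,1)\to X$ with $T^{-1}(B\cap X)\in\Phi([B\cap X])$ would satisfy $\lambda(T^{-1}(B)\mathbin{\triangle}B)=0$ for every rational interval $B$. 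Hence $T(x)=x$ off a null set, so $X$ would contain a conull measurable set, a contradiction. Sikorski-type representations give point maps into a standard target (e.g.\ via $\omega\mapsto(f(\omega),g(\omega))\in\R^2$), not into $\Omega$. The step could be rescued that way, but as written it does not hold.

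More to the point, no point map is needed: your consistency statement follows from the argument you already gave. Suppose $h_n\to h$ a.e. with $h_n$ simple, and let $h_n'$ be the dyadic truncations of $h$. Then $k_n=h_n-h_n'$ is simple and $k_n\to0$ a.e., so $\bigcap_N\bigcup_{n\ge N}\{|k_n|>\varepsilon\}$ is $\mathbb P$-null for each $\varepsilon>0$. Since $\{|Uk_n|>\varepsilon\}$ represents $\Phi([\{|k_n|>\varepsilon\}])$ and $\Phi$ commutes with countable operations, you get $Uk_n\to0$ a.e. Hence $Uh_n=Uh_n'+Uk_n\to Uh$ a.e., and applying this to $f_n+g_n\to f+g$ and $cf_n\to cf$ gives linearity.

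Your stated reason for avoiding convergence in measure is also mistaken: limits in $L^0$ are unique modulo null sets. The paper's proof exploits exactly this. Because $U_0$ is linear and distribution-preserving on simple functions, $\rho_\lambda(U_0f,U_0g)=\rho_{\mathbb P}(f,g)$ for $\rho_\mu(f,g)=\int|f-g|/(1+|f-g|)\,d\mu$. So $U_0$ extends uniquely by continuity to a linear isometry into the complete space $L^0(0,1;\R)$. Linearity of the extension is then automatic, and equality of distributions follows via characteristic functions. That route does not even use the $\sigma$-property of $\Phi$, which your a.e.\ construction needs essentially.
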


\begin{proof}

For a real-valued simple function
\(f=\sum_{j=1}^n a_j\mathbf1_{A_j}\), where the \(A_j\)'s form a
measurable partition modulo null sets, choose representatives
\(B_j\in\mathcal L\) of \(\Phi([A_j])\) and put
\(U_0f=\sum_{j=1}^n a_j\mathbf1_{B_j}\).
The choice of the representatives \(B_j\) does not affect \(U_0f\)
modulo null sets. Passing to common refinements shows that the
definition is also independent of the representation of \(f\), and
that \(U_0\) is real linear. Measure preservation shows that \(U_0f\)
and \(f\) have the same distribution.

For a probability measure \(\mu\), define
\(\rho_\mu(f,g)=\int |f-g|/(1+|f-g|)\,d\mu\).
This metric metrizes convergence in measure. Since \(U_0\) is real
linear, for simple functions \(f\) and \(g\) we have
\(U_0f-U_0g=U_0(f-g)\). Moreover, \(U_0(f-g)\) and \(f-g\) have the
same distribution, and hence
\(\rho_\lambda(U_0f,U_0g)=\rho_{\mathbb P}(f,g)\).
Thus \(U_0\) is an isometry on simple functions with respect to these
metrics. Since simple functions are dense in \(L^0(\Omega;\R)\) for
convergence in measure and \(L^0(0,1;\R)\) is complete with respect
to \(\rho_\lambda\), \(U_0\) extends uniquely to a real linear map
\(U:L^0(\Omega;\R)\to L^0(0,1;\R)\) which preserves the metric.

Let \(f_n\) be simple and \(f_n\to f\) in measure. Then
\(U_0f_n\to Uf\) in measure. For every \(\theta\in\R\), continuity of
\(t\mapsto e^{i\theta t}\) gives
\(e^{i\theta f_n}\to e^{i\theta f}\) and
\(e^{i\theta U_0f_n}\to e^{i\theta Uf}\) in measure.
Since these functions are uniformly bounded and the underlying
measures are finite, both convergences also hold in \(L^1\).
Therefore
\[
\begin{aligned}
        \int_0^1 e^{i\theta Uf}\,d\lambda
        &=
        \lim_{n\to\infty}
        \int_0^1 e^{i\theta U_0f_n}\,d\lambda \\
        &=
        \lim_{n\to\infty}
        \int_\Omega e^{i\theta f_n}\,d\mathbb P
        =
        \int_\Omega e^{i\theta f}\,d\mathbb P.
\end{aligned}
\]
Thus \(Uf\) and \(f\) have the same characteristic function, and
hence the same distribution.
\end{proof}

Kanter's construction provides a single stable integral map together
with an exact distributional identity. Using the preceding
\(L^0\)-transfer, we record the resulting identities for all lower
moments on \(\T\).

\begin{proposition}
\label{prop:stable-embed}
Let \(0<r\le2\). There exists a real linear map
\[
        S_r:L^r(0,1;\R)\longrightarrow L^0(\T;\R)
\]
such that
\begin{equation}
\label{eq:stable-moments}
        \|S_rx\|_{L^\tau(\T)}
        =
        \kappa_{r,\tau}\|x\|_{L^r(0,1)},
        \qquad
        x\in L^r(0,1;\R),\quad 0<\tau<r.
\end{equation}
Here \(Z_r\) is a symmetric \(r\)-stable random variable, normalized by
\(\mathbb E e^{i\theta Z_r}=e^{-|\theta|^r}\), and
\(\kappa_{r,\tau}:=(\mathbb E|Z_r|^\tau)^{1/\tau}\in(0,\infty)\).
If \(1<r\le2\), then
\(S_r(L^r(0,1;\R))\subset L^1(\T;\R)\).
\end{proposition}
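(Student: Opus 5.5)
We will obtain \(S_r\) by transferring a classical probabilistic construction, Kanter's stable integral, onto \((\T,m)\) using Lemma~\ref{lem:L0-transfer}. The first step is to produce, on some standard probability space \((\Omega,\mathcal F,\mathbb P)\), a real linear map \(I_r:L^r(0,1;\R)\to L^0(\Omega;\R)\) such that \(I_rx\) is distributed as \(\|x\|_{L^r(0,1)}Z_r\). Concretely, we take a symmetric \(r\)-stable random measure on \((0,1)\) with Lebesgue control measure and let \(I_rx=\int_0^1 x\,dW_r\); Kanter's explicit series/integral representation \cite[Sections~4--5]{Kanter1973} realizes this on a standard Borel probability space. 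By construction the characteristic function satisfies
\[
\mathbb E e^{i\theta I_rx}=\exp\Bigl(-|\theta|^r\int_0^1|x|^r\,d\lambda\Bigr).
\]
Uniqueness of distributions given by characteristic functions then yields \(I_rx\overset{d}{=}\|x\|_rZ_r\).

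Linearity needs some care. For simple functions \(x\) the map \(I_r\) is linear by construction. Independence of increments gives the characteristic function identity for such \(x\). Hence \(\rho_{\mathbb P}(I_rx,I_ry)\) depends only on \(\|x-y\|_r\) and tends to \(0\) as \(\|x-y\|_r\to0\). We therefore extend \(I_r\) by continuity from simple functions to a real linear map into \(L^0\), exactly as in the proof of Lemma~\ref{lem:L0-transfer}. Passing to limits of characteristic functions preserves the distributional identity.

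The second step is the transfer to \(\T\). The space \((\Omega,\mathcal F,\mathbb P)\) may be taken standard and nonatomic: if needed, restrict to the \(\sigma\)-algebra generated by countably many variables \(I_rx_n\) with \(x_n\) dense, adjoining an independent uniform variable to remove atoms. By Carathéodory's isomorphism theorem its measure algebra is then isomorphic to that of \(((0,1),\lambda)\), which in turn is isomorphic to that of \((\T,m)\) via \(t\mapsto e^{2\pi it}\). Lemma~\ref{lem:L0-transfer}, composed with this identification, gives a distribution-preserving real linear \(U:L^0(\Omega;\R)\to L^0(\T;\R)\). We set \(S_r=U\circ I_r\). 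Then
\[
\|S_rx\|_{L^\tau(\T)}^\tau=\mathbb E|I_rx|^\tau=\|x\|_r^\tau\,\mathbb E|Z_r|^\tau,
\]
which is \eqref{eq:stable-moments}.

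It remains to check that \(\kappa_{r,\tau}\in(0,\infty)\) for \(\tau<r\). For \(r=2\) this is immediate, since \(Z_2\) is Gaussian. For \(r<2\) it follows from the tail bound \(\mathbb P(|Z_r|>u)\asymp u^{-r}\), and positivity holds because \(Z_r\ne0\). If \(1<r\le2\), taking \(\tau=1\) shows that \(S_rx\in L^1\). The main obstacle is the first step: setting up the stable integral rigorously as a linear map on all of \(L^r\) on a standard space. We handle it by linearity on simple functions followed by completion in measure, as described above.
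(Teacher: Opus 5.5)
Your proposal is correct and takes essentially the same route as the paper: Kanter's stable integral \(I_r\) with \(I_rx\overset{d}{=}\|x\|_{L^r(0,1)}Z_r\), the isomorphism of separable nonatomic measure algebras together with Lemma~\ref{lem:L0-transfer} to move to \(\T\), and then the moment identity. The differences are only technical: the paper cites Kanter for linearity and separability and gets nonatomicity from the absolutely continuous law of \(I_r\mathbf 1_{(0,1)}\), so your adjoined uniform variable is unnecessary though harmless; you instead build the linear extension from simple functions and restrict to a countably generated \(\sigma\)-algebra yourself.
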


\begin{proof}
Kanter's stable integral construction
\cite[Sections~4--5, pp.~405--406]{Kanter1973}
provides, on a probability space
\((\Omega,\mathcal F,\mathbb P)\), a real linear map
\(I_r:L^r(0,1;\R)\to L^0(\Omega;\R)\) such that
\[
        \mathbb E e^{i\theta I_rx}
        =
        \exp\left(
        -|\theta|^r\|x\|_{L^r(0,1)}^r
        \right),
        \qquad
        x\in L^r(0,1;\R),\quad \theta\in\R.
\]
This is formula~\textup{(***)} in
\cite[p.~406]{Kanter1973}, with \(q=r\), \(f=x\), and \(v=\theta\).
Taking \(x=\mathbf1_{(0,1)}\), we see that
\(I_r\mathbf1_{(0,1)}\) has characteristic function
\(e^{-|\theta|^r}\in L^1(\R)\). Hence Fourier inversion shows that
its distribution is absolutely continuous. Thus
\((\Omega,\mathcal F,\mathbb P)\) is nonatomic: if \(A\) were an
atom of positive measure, then \(I_r\mathbf1_{(0,1)}\) would be
constant almost everywhere on \(A\), so its distribution would have
a point mass, contradicting its absolute continuity.

Kanter notes that the underlying probability space is separable in
the measure theoretic sense \cite[p.~405]{Kanter1973}. Hence the
classical isomorphism theorem for separable nonatomic probability
measure algebras
\cite[Section~41, Theorem~C, p.~173]{Halmos1974} gives a
measure-preserving Boolean \(\sigma\)-isomorphism
\[
        \Phi:
        \mathcal F/\mathcal N_{\mathbb P}
        \longrightarrow
        \mathcal L/\mathcal N_\lambda.
\]
This is the measure algebra argument used in the proof of Kanter's
corollary; see \cite[p.~407]{Kanter1973}. By
Lemma~\ref{lem:L0-transfer}, there is a distribution-preserving real
linear map \(U:L^0(\Omega;\R)\to L^0(0,1;\R)\).

Using the measure-preserving parametrization
\(u\mapsto e^{2\pi iu}\) of \(\T\), define \(S_r\) by
\((S_rx)(e^{2\pi iu})=(UI_rx)(u)\) for almost every
\(u\in[0,1)\). Then
\(S_r:L^r(0,1;\R)\to L^0(\T;\R)\) is real linear.
Since \(U\) preserves distributions and the parametrization above
preserves measure, while uniqueness of characteristic functions gives
\(I_rx\overset{d}{=}\|x\|_{L^r(0,1)}Z_r\), we have
\[
        S_rx
        \overset{d}{=}
        UI_rx
        \overset{d}{=}
        I_rx
        \overset{d}{=}
        \|x\|_{L^r(0,1)}Z_r,
        \qquad
        x\in L^r(0,1;\R).
\]Kanter's moment argument
\cite[pp.~406--407]{Kanter1973} gives
\(0<\mathbb E|Z_r|^\tau<\infty\) for \(0<\tau<r\), and hence
\[
        \|S_rx\|_{L^\tau(\T)}
        =
        \kappa_{r,\tau}\|x\|_{L^r(0,1)}.
\]
This proves \eqref{eq:stable-moments}. If \(1<r\le2\), the last
assertion follows by taking \(\tau=1\).
\end{proof}

\begin{remark}
For completeness, we record the exact value of the constant
\(\kappa_{r,\tau}\) in Proposition~\ref{prop:stable-embed}.
For the normalization
\(\mathbb E e^{i\theta Z_r}=e^{-|\theta|^r}\), the integral
representation in the proof of
\cite[Theorem~6.4.16, p.~173]{AlbiacKalton2016}, together with
integration by parts and the beta and duplication formulas for the
Gamma function, gives
\[
        \mathbb E|Z_r|^\tau
        =
        \frac{
        2^\tau
        \Gamma\bigl((1+\tau)/2\bigr)
        \Gamma\bigl(1-\tau/r\bigr)
        }{
        \sqrt{\pi}\,
        \Gamma\bigl(1-\tau/2\bigr)
        },
        \qquad
        0<\tau<r<2.
\]
When \(r=2\), \(Z_2\) is a centered Gaussian random variable with
variance \(2\), and hence
\[
        \mathbb E|Z_2|^\tau
        =
        \frac{
        2^\tau\Gamma\bigl((1+\tau)/2\bigr)
        }{
        \sqrt{\pi}
        },
        \qquad
        \tau>0.
\]
Thus \(0<\kappa_{r,\tau}<\infty\) whenever
\(0<\tau<r\le2\).
\end{remark}

Let \(P_+\) denote the Riesz projection onto the nonnegative Fourier
modes.  The next lemma is an immediate consequence of the M.~Riesz theorem.
We include the proof to fix the normalization and record the symmetry used below.  
For an analytic function \(f\) on \(\D\), set
\[
        (\mathfrak cf)(z)
        =
        \overline{f(\overline z)},
        \qquad
        z\in\D.
\]
Then \(\mathfrak c\) is a conjugate-linear isometric involution on
\(H^u\) for every \(1\le u<\infty\).

\begin{lemma}
\label{lem:real-lift}
Let \(1<t<\infty\), and let \(u\in L^t(\T)\) be real-valued. Define
\[
        \mathcal Au=2P_+u-\widehat u(0).
\]
Then \(\mathcal Au\in H^t\),
\(\operatorname{Re}(\mathcal Au)=u\) almost everywhere on \(\T\), and
\begin{equation}
\label{eq:lift-norm}
        \|u\|_{L^t}
        \le
        \|\mathcal Au\|_{H^t}
        \le
        \bigl(2\|P_+\|_{L^t\to L^t}+1\bigr)\|u\|_{L^t}.
\end{equation}
If, in addition,
\(u(\overline\zeta)=u(\zeta)\) almost everywhere on \(\T\), then
\(\mathfrak c(\mathcal Au)=\mathcal Au\).
\end{lemma}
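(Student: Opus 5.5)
We work directly with Fourier coefficients of \(u\). Since \(u\in L^t(\T)\subset L^1(\T)\) with \(t>1\), the M.~Riesz theorem gives that \(P_+\) is bounded on \(L^t(\T)\), so \(P_+u\) is the boundary function of an \(H^t\) function. Subtracting the constant \(\widehat u(0)\) keeps us in \(H^t\). Hence \(\mathcal Au\in H^t\), and its Fourier coefficients are \(\widehat u(0)\) at \(n=0\), \(2\widehat u(n)\) for \(n>0\), and \(0\) for \(n<0\).

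To identify the real part, we use that \(u\) is real-valued, so \(\widehat u(-n)=\overline{\widehat u(n)}\). The function \(\overline{\mathcal Au}\) then has coefficients \(\widehat u(0)\) at \(0\), \(2\widehat u(n)\) at \(n<0\), and \(0\) for \(n>0\). Therefore \(\mathcal Au+\overline{\mathcal Au}\) and \(2u\) have the same Fourier coefficients. Both lie in \(L^1\), so uniqueness of Fourier coefficients gives \(\operatorname{Re}(\mathcal Au)=u\) almost everywhere.

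The norm bounds follow quickly. The lower bound is \(|u|=|\operatorname{Re}\mathcal Au|\le|\mathcal Au|\) pointwise on \(\T\). The upper bound follows from the triangle inequality together with \(|\widehat u(0)|\le\|u\|_{L^1}\le\|u\|_{L^t}\), since \(m\) is a probability measure. This yields \(\|\mathcal Au\|_{H^t}\le 2\|P_+\|\|u\|_{L^t}+\|u\|_{L^t}\).

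For the symmetry, assume \(u(\overline\zeta)=u(\zeta)\). Then
\[
\widehat u(n)=\int u(\zeta)\overline\zeta^{\,n}\,dm=\int u(\overline\zeta)\zeta^{n}\,dm=\widehat u(-n)=\overline{\widehat u(n)},
\]
so every coefficient of \(u\) is real. Writing \(\mathcal Au(z)=\sum_{n\ge0}c_nz^n\) with all \(c_n\) real, we get \((\mathfrak c\mathcal Au)(z)=\sum\overline{c_n}z^n=\mathcal Au(z)\).

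The only point needing care is that the identities between \(L^t\) boundary functions and their Fourier series are used legitimately. This is covered by uniqueness of Fourier coefficients in \(L^1\) and by the fact that an \(H^t\) function is determined by its nonnegative coefficients. Apart from that, the proof is routine.
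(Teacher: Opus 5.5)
Your proposal is correct and follows essentially the same route as the paper. Both use the M.~Riesz theorem for \(\mathcal Au\in H^t\), and compare Fourier coefficients via \(\widehat u(-n)=\overline{\widehat u(n)}\) to get \(\operatorname{Re}(\mathcal Au)=u\) and hence the lower bound. The upper bound comes from the triangle inequality with \(|\widehat u(0)|\le\|u\|_{L^1}\le\|u\|_{L^t}\), and the reflection symmetry makes every coefficient real, which gives \(\mathfrak c(\mathcal Au)=\mathcal Au\).
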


\begin{proof}
By the M.~Riesz theorem, \(P_+\) is bounded on \(L^t(\T)\), and hence
\(\mathcal Au\in H^t\). Since \(u\) is real-valued,
\(\widehat u(0)\in\mathbb R\) and
\(\widehat u(-n)=\overline{\widehat u(n)}\) for \(n\ge1\).
Comparison of Fourier coefficients therefore gives
\(\operatorname{Re}(\mathcal Au)=u\) almost everywhere on \(\T\).
Consequently,
\(
\|u\|_{L^t}\le\|\mathcal Au\|_{H^t}.
\)

Since normalized Haar measure has total mass one,
\[
        |\widehat u(0)|
        \le
        \|u\|_{L^1}
        \le
        \|u\|_{L^t}.
\]
Hence
\[
        \|\mathcal Au\|_{H^t}
        \le
        2\|P_+u\|_{L^t}+|\widehat u(0)|
        \le
        \bigl(2\|P_+\|_{L^t\to L^t}+1\bigr)\|u\|_{L^t},
\]
which proves \eqref{eq:lift-norm}.

Suppose now that
\(u(\overline\zeta)=u(\zeta)\) almost everywhere on \(\T\).
The reflection symmetry gives
\(\widehat u(-n)=\widehat u(n)\), while the real-valuedness of \(u\)
gives
\(\widehat u(-n)=\overline{\widehat u(n)}\).
Thus \(\widehat u(n)\in\mathbb R\) for every \(n\in\mathbb Z\).
All Taylor coefficients of \(\mathcal Au\) are therefore real, and
hence
\(\mathfrak c(\mathcal Au)=\mathcal Au\).
\end{proof}

The following lemma constructs a measure-preserving parametrization
of any measurable subset of \(\T\) with positive measure.

\begin{lemma}
\label{lem:mp-param}
Let \(A\subset\T\) be Lebesgue measurable with \(m(A)>0\), and set
\(\nu_A=\frac{m|_A}{m(A)}.
\)
Then there exists a measurable map
\(\psi_A:A\to\T\) such that
\[
        \nu_A\bigl(\psi_A^{-1}(D)\bigr)=m(D)
\]
for every Lebesgue measurable set \(D\subset\T\). Consequently, for
every \(1\le\tau<\infty\), the map
\[
        U_A:L^\tau(\T;\R)\to L^\tau(A,\nu_A;\R),
        \qquad
        U_Ah=h\circ\psi_A,
\]
is a well-defined real linear isometry.
\end{lemma}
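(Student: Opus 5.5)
The natural route is a one-dimensional distribution-function construction. Let \(F(x)=m(A\cap\{e^{2\pi i\theta}:0\le\theta<x\})/m(A)\) for \(x\in[0,1]\). This \(F\) is continuous, nondecreasing, and satisfies \(F(0)=0\) and \(F(1)=1\). Parametrize \(A\) by \(\theta\in[0,1)\) and define
\[
\psi_A(e^{2\pi i\theta})=e^{2\pi i F(\theta)},\qquad e^{2\pi i\theta}\in A,
\]
using the convention \(F(\theta)=1\mapsto 1\in\T\). Then \(\psi_A\) is a composition of continuous and Borel maps, so it is Borel measurable on \(A\).

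The key claim is that the push-forward of \(\nu_A\) under \(\theta\mapsto F(\theta)\) is Lebesgue measure on \([0,1]\). I would verify this on intervals. For \(0\le y<1\), let \(x_y=\sup\{x:F(x)\le y\}\). By continuity \(F(x_y)=y\), and \(\{\theta:F(\theta)\le y\}=[0,x_y]\) because \(F\) is monotone. Hence
\[
\nu_A(\{F\le y\})=m(A\cap[0,x_y])/m(A)=F(x_y)=y,
\]
and the endpoint \(x_y\) is a null point. Two Borel measures on \([0,1]\) that agree on the sets \([0,y]\) agree on all Borel sets, by the \(\pi\)–\(\lambda\) theorem. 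Transporting through \(u\mapsto e^{2\pi iu}\) gives \(\nu_A(\psi_A^{-1}(D))=m(D)\) for every Borel \(D\subset\T\).

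The step needing the most care is extending this to Lebesgue measurable \(D\), since preimages of Lebesgue sets under measurable maps need not be measurable in general. Here we only need it for null sets. If \(m(N)=0\), choose a Borel null set \(B\supset N\). Then \(\psi_A^{-1}(N)\subset\psi_A^{-1}(B)\), and the latter has \(\nu_A\)-measure \(0\). By completeness of Lebesgue measure on \(A\), \(\psi_A^{-1}(N)\) is measurable with measure \(0\). Writing \(D=B'\cup N\) with \(B'\) Borel and \(m(N)=0\) then gives the identity for all Lebesgue measurable \(D\).

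For the operator \(U_A\), the previous paragraph shows that \(h=h'\) a.e. implies \(h\circ\psi_A=h'\circ\psi_A\) \(\nu_A\)-a.e., so \(U_A\) is well defined on equivalence classes. Measurability of \(h\circ\psi_A\) follows by the same Borel-representative argument. Real linearity is immediate. Finally, the change-of-variables formula for push-forward measures gives
\[
\int_A|h\circ\psi_A|^\tau\,d\nu_A=\int_\T|h|^\tau\,dm,
\]
first for indicators, then for simple functions, then by monotone convergence. So \(U_A\) is an isometry into \(L^\tau(A,\nu_A;\R)\).
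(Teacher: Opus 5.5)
Your proposal is correct and follows essentially the same route as the paper, which uses the same normalized distribution function $F$ of $A$ (after replacing $A$ by a Borel set $A_0$ with $m(A\mathbin{\triangle}A_0)=0$) and the same map $e^{2\pi it}\mapsto e^{2\pi iF(t)}$. It then extends from Borel to Lebesgue measurable $D$ through null sets and proves the isometry exactly as you do; the one difference is that the paper verifies the push-forward identity via the change-of-variables formula $\int_0^1\Psi(F(t))F'(t)\,dt=\int_0^1\Psi(u)\,du$ for absolutely continuous monotone $F$, whereas your check on the intervals $[0,y]$ via the point $x_y$ and the $\pi$--$\lambda$ theorem is an equally valid, slightly more elementary substitute.
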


\begin{proof}
Choose a Borel set \(A_0\subset\T\) such that
\( m(A\mathbin{\triangle}A_0)=0.
\)
Then \(m(A_0)=m(A)\). Let \(\lambda\) denote Lebesgue measure on
\([0,1)\), let \(\chi(t)=e^{2\pi it}\), and set
\[
        B=\chi^{-1}(A_0),
        \qquad
        \beta=\lambda(B)=m(A_0)=m(A)>0.
\]
Define
\[
        F(t)
        =
        \frac1\beta\int_0^t\mathbf1_B(s)\,ds,
        \qquad
        0\le t\le1.
\]
Then \(F\) is absolutely continuous and nondecreasing,
\(F(0)=0\), \(F(1)=1\), and
\(F'(t)=\mathbf1_B(t)/\beta\) almost everywhere on \([0,1]\).
Hence the change-of-variables formula for absolutely continuous
monotone functions \cite[p.~156]{Rudin1987} gives
\[
        \int_0^1 \Psi(F(t))F'(t)\,dt
        =
        \int_0^1 \Psi(u)\,du
\]
for every nonnegative Lebesgue measurable function
\(\Psi\) on \([0,1]\).

Since \(A_0\) and \(B\) are Borel and
\(\chi|_B:B\to A_0\) is a Borel isomorphism, the formula
\[
        \psi_0(e^{2\pi it})
        =
        e^{2\pi iF(t)},
        \qquad
        t\in B,
\]
defines a Borel map \(\psi_0:A_0\to\T\). If
\(D\subset\T\) is Borel, then, applying the preceding formula to
\(\Psi(u)=\mathbf1_D(e^{2\pi iu})\), we obtain
\[
\begin{aligned}
        \frac1{\beta}
        m\bigl(\psi_0^{-1}(D)\bigr)
        &=
        \frac1\beta
        \int_B
        \mathbf1_D\bigl(e^{2\pi iF(t)}\bigr)\,dt\\
        &=
        \int_0^1
        \mathbf1_D\bigl(e^{2\pi iF(t)}\bigr)F'(t)\,dt\\
        &=
        \int_0^1\mathbf1_D(e^{2\pi iu})\,du
        =
        m(D).
\end{aligned}
\]

Fix \(\zeta_0\in\T\), and define
\[
        \psi_A(\zeta)
        =
        \begin{cases}
        \psi_0(\zeta),
                & \zeta\in A\cap A_0,\\
        \zeta_0,
                & \zeta\in A\setminus A_0.
        \end{cases}
\]
Since \(A\setminus A_0\) is null, \(\psi_A:A\to\T\) is measurable.
Moreover, for every Borel set \(D\subset\T\),
\[
        \nu_A\bigl(\psi_A^{-1}(D)\bigr)
        =
        m(D),
\]
because \(A\mathbin{\triangle}A_0\) is null and
\(m(A)=m(A_0)=\beta\).

Now let \(D\subset\T\) be Lebesgue measurable. Choose Borel sets
\(D_0,N\subset\T\) such that
\[
        D\mathbin{\triangle}D_0\subset N,
        \qquad
        m(N)=0.
\]
Since
\(\nu_A(\psi_A^{-1}(N))=m(N)=0\), the sets
\(\psi_A^{-1}(D)\) and \(\psi_A^{-1}(D_0)\) differ by a
\(\nu_A\)-null set. Hence \(\psi_A^{-1}(D)\) is
\(\nu_A\)-measurable and
\[
        \nu_A\bigl(\psi_A^{-1}(D)\bigr)
        =
        m(D).
\]

If \(h_1=h_2\) \(m\)-almost everywhere, then
\(h_1\circ\psi_A=h_2\circ\psi_A\) \(\nu_A\)-almost everywhere.
Thus \(U_Ah=h\circ\psi_A\) is well defined on
\(L^\tau(\T;\R)\). Moreover,
\[
        \int_A \Phi\circ\psi_A\,d\nu_A
        =
        \int_\T \Phi\,dm
\]
for every nonnegative Lebesgue measurable function \(\Phi\) on
\(\T\). Taking \(\Phi=|h|^\tau\) gives
\[
        \|U_Ah\|_{L^\tau(A,\nu_A)}
        =
        \|h\|_{L^\tau(\T)},
        \qquad
        h\in L^\tau(\T;\R).
\]
Hence \(U_A\) is a real linear isometry.
\end{proof}
%------------------------------------------------
%------------------------------------------------
\subsection{Proof of the localization theorem}
\label{subsec:local-Lr}
%------------------------------------------------

We now combine the preceding constructions to prove Theorem~B.  The case \(r=2\) follows
from the lacunary construction, while the rest of the proof treats
\(p<r<2\).

\begin{proof}[Proof of Theorem~B]
Fix \(1\le p<r\le2\), and let \(E_0\subset\T\) be measurable with
\(\delta=m(E_0)>0\). If \(r=2\),
Corollary~\ref{cor:lacunary-tail} gives a closed subspace
\(M_{p,2,E_0}\subset H^p\), with
\(M_{p,2,E_0}\simeq\ell^2\simeq L^2(0,1)\), such that, for every
\(1\le s\le p\),
\[
        \|f\|_{L^s(E_0)}
        \ge
        D_p\delta^{1/2+1/s}\|f\|_{H^p},
        \qquad
        f\in M_{p,2,E_0}.
\]
Thus the conclusion holds with
\(c_{p,2,s}(\delta)=D_p\delta^{\,1/2+1/s}\).  
We henceforth assume that
\(p<r<2\).

We carry out the construction for an arbitrary \(t\in(p,r)\) and
set \(t=(p+r)/2\) at the end. 
We first find a subset \(G_0\subset E_0\) with
\(m(G_0)\ge\delta^2\) that is invariant under a map of the form
\(\zeta\mapsto\eta\overline{\zeta}\). After a suitable rotation, the
image of \(G_0\) is invariant under complex conjugation. For \(\eta\in\T\), set
\(\sigma_\eta(\zeta)=\eta\overline\zeta\), \(\zeta\in\T\). The map \(\sigma_\eta\) is a measure-preserving involution.  Hence
\[
        m\bigl(E_0\cap\sigma_\eta(E_0)\bigr)
        =
        \int_{E_0}
        \mathbf1_{E_0}(\eta\overline\zeta)\,dm(\zeta).
\]
Fubini's theorem and the rotation invariance of \(m\) give
\[
\begin{aligned}
        \int_\T m\bigl(E_0\cap\sigma_\eta(E_0)\bigr)\,dm(\eta)
        =
        \int_{E_0}\int_\T
        \mathbf1_{E_0}(\eta\overline\zeta)\,dm(\eta)\,dm(\zeta)
        =
        m(E_0)^2.
\end{aligned}
\]
Therefore there exists \(\eta_0\in\T\) such that
\(G_0=E_0\cap\sigma_{\eta_0}(E_0)\) satisfies \(m(G_0)\ge\delta^2\).
Since \(\sigma_{\eta_0}\circ\sigma_{\eta_0}=\operatorname{Id}\),
\(\sigma_{\eta_0}(G_0)
=\sigma_{\eta_0}(E_0)\cap E_0=G_0\).
Choose \(\omega\in\T\) with \(\omega^2=\eta_0\), and put
\(\widetilde E=\overline\omega E_0\) and
\(G=\overline\omega G_0\). Then \(m(\widetilde E)=\delta\), \(G\subset\widetilde E\),
\(m(G)\ge\delta^2\), and \(G\) is invariant under complex conjugation.
Indeed, if \(\zeta=\overline\omega\xi\in G\), then
\(\overline\zeta
=\omega\overline\xi
=\overline\omega\,\sigma_{\eta_0}(\xi)\in G\). The rotation
\((\mathcal R_\omega f)(z)=f(\overline\omega z)\)
is an isometry on \(H^p\), and
\[
        \|\mathcal R_\omega f\|_{L^s(E_0)}
        =
        \|f\|_{L^s(\widetilde E)},
        \qquad
        1\le s\le p.
\]
Thus any subspace obtained for \(\widetilde E\) is carried by
\(\mathcal R_\omega\) to a subspace for \(E_0\), with the same
constants.  It is therefore enough to work on \(\widetilde E\), where
\[
        G\subset\widetilde E,
        \qquad
        m(G)\ge\delta^2,
        \qquad
        \{\overline\zeta:\zeta\in G\}=G.
\]

We place the stable integral on one half of \(G\).  Let
\(G_+=G\cap\{\zeta\in\T:\operatorname{Im}\zeta>0\}\). Since \(G\) is invariant under complex conjugation,
\[
        G
        =
        G_+
        \cup
        \{\overline\zeta:\zeta\in G_+\}
        \cup
        \bigl(G\cap\{-1,1\}\bigr),
\]
and the three sets on the right are pairwise disjoint. Since conjugation preserves \(m\) and \(m(\{-1,1\})=0\),
we have \(m(G_+)=m(G)/2\). Equip \(G_+\) with the probability measure
\(d\nu=2m(G)^{-1}\,dm|_{G_+}\).

Since \(G_+\) is measurable and \(m(G_+)>0\),
Lemma~\ref{lem:mp-param} gives a measurable map \(\psi_{G_+}\colon G_+\to\T\) such that
\(\nu\bigl(\psi_{G_+}^{-1}(D)\bigr)=m(D)
\)
for every Lebesgue measurable set \(D\subset\T\).
Let \(S_r\) be the map given by
Proposition~\ref{prop:stable-embed}, and set
\[
        \widetilde S_r x
        =
        (S_r x)\circ\psi_{G_+},
        \qquad
        x\in L^r(0,1;\R).
\]
Then \(\widetilde S_r\) is a well-defined real linear map into
\(L^0(G_+,\nu;\R)\), and
\begin{equation}
\label{eq:stable-Gplus}
        \|\widetilde S_r x\|_{L^\tau(G_+,\nu)}
        =
        \|S_r x\|_{L^\tau(\T)}
        =
        \kappa_{r,\tau}\|x\|_{L^r(0,1)},
        \qquad
        1\le\tau\le t.
\end{equation}

For \(x\in L^r(0,1;\R)\), define a real-valued function \(u_x\) on
\(\T\) by
\[
        u_x(\zeta)
        =
        \begin{cases}
        (\widetilde S_rx)(\zeta),
                & \zeta\in G_+,\\[1mm]
        (\widetilde S_rx)(\overline\zeta),
                & \zeta\in
                  \{\overline\xi:\xi\in G_+\},\\[1mm]
        0,
                & \text{otherwise}.
        \end{cases}
\]
Then \(u_x\) is supported on \(G\) and satisfies
\(u_x(\overline\zeta)=u_x(\zeta)\) a.e. on \(\T\).  By
\eqref{eq:stable-Gplus},
\[
\begin{aligned}
        \|u_x\|_{L^\tau(\T)}^\tau
        &=
        2\int_{G_+}|\widetilde S_rx|^\tau\,dm =
        m(G)\int_{G_+}|\widetilde S_rx|^\tau\,d\nu\\
        &=
        m(G)\kappa_{r,\tau}^\tau
        \|x\|_{L^r(0,1)}^\tau.
\end{aligned}
\]
Put \(\gamma_\tau=m(G)^{1/\tau}\kappa_{r,\tau}\).  Then
\begin{equation}
\label{eq:sym-moments}
        \|u_x\|_{L^\tau(\T)}
        =
        \gamma_\tau\|x\|_{L^r(0,1)},
        \qquad
        1\le\tau\le t.
\end{equation}

Define \(T_{\R}x=\mathcal Au_x\). By
Lemma~\ref{lem:real-lift},
\(T_{\R}x\in H^t\subset H^p\) and
\(\operatorname{Re}(T_{\R}x)=u_x\). Put \(K_t=2\|P_+\|_{L^t\to L^t}+1\).  Since \(p<t\), it follows from
\eqref{eq:sym-moments} and
Lemma~\ref{lem:real-lift} that
\begin{equation}
\label{eq:real-Hp}
\begin{aligned}
        \gamma_p\|x\|_{L^r(0,1)}
        &=
        \|u_x\|_{L^p}
        \le
        \|T_{\R}x\|_{H^p}\\
        &\le
        \|T_{\R}x\|_{H^t}
        \le
        K_t\|u_x\|_{L^t}\\
        &=
        K_t\gamma_t\|x\|_{L^r(0,1)}.
\end{aligned}
\end{equation}
Since \(\operatorname{Re}(T_{\R}x)=u_x\) and \(u_x\) is supported
on \(G\), \eqref{eq:sym-moments} with \(\tau=1\) gives
\begin{equation}
\label{eq:real-local}
        \|T_{\R}x\|_{L^1(G)}
        \ge
        \|u_x\|_{L^1(G)}
        =
        \|u_x\|_{L^1(\T)}
        =
        \gamma_1\|x\|_{L^r(0,1)}.
\end{equation}
Since \(u_x\) is real-valued and
\(u_x(\overline\zeta)=u_x(\zeta)\), the symmetry assertion in
Lemma~\ref{lem:real-lift} gives
\begin{equation}
\label{eq:conj-invariance}
        \mathfrak c(T_{\R}x)
        =
        \mathfrak c(\mathcal Au_x)
        =
        \mathcal Au_x
        =
        T_{\R}x.
\end{equation}

We now complexify \(T_{\R}\).  For
\(x=a+ib\in L^r(0,1)\), with \(a\) and \(b\) real-valued, define
\(J_{p,r,\widetilde E}x=T_{\R}a+iT_{\R}b\). Since \(T_{\R}\) is real linear, this defines a complex linear operator
\(J_{p,r,\widetilde E}:L^r(0,1)\to H^p\).
Put \(F=J_{p,r,\widetilde E}x\).  It follows from
\eqref{eq:conj-invariance} that
\[
        T_{\R}a
        =
        \frac{F+\mathfrak cF}{2},
        \qquad
        T_{\R}b
        =
        \frac{F-\mathfrak cF}{2i}.
\]
The operator \(\mathfrak c\) is an isometry on \(H^p\).  Since \(G\)
is invariant under complex conjugation,
\[
\begin{aligned}
        \|\mathfrak cF\|_{L^1(G)}
        &=
        \int_G|F(\overline\zeta)|\,dm(\zeta)\\
        &=
        \int_G|F(\zeta)|\,dm(\zeta)
        =
        \|F\|_{L^1(G)}.
\end{aligned}
\]
Therefore
\[
\begin{aligned}
        \|F\|_{H^p}
        &\ge
        \max\bigl\{
        \|T_{\R}a\|_{H^p},
        \|T_{\R}b\|_{H^p}
        \bigr\},\\
        \|F\|_{L^1(G)}
        &\ge
        \max\bigl\{
        \|T_{\R}a\|_{L^1(G)},
        \|T_{\R}b\|_{L^1(G)}
        \bigr\}.
\end{aligned}
\]
Since \(r<2\), \(|a+ib|^r\le |a|^r+|b|^r\), and hence
\[
        \max\bigl\{
        \|a\|_{L^r(0,1)},
        \|b\|_{L^r(0,1)}
        \bigr\}
        \ge
        2^{-1/r}\|x\|_{L^r(0,1)}.
\]
Combining these estimates with
\eqref{eq:real-Hp} and
\eqref{eq:real-local}, we obtain
\begin{equation}
\label{eq:complex-Hp}
        2^{-1/r}\gamma_p\|x\|_{L^r(0,1)}
        \le
        \|J_{p,r,\widetilde E}x\|_{H^p}
        \le
        2K_t\gamma_t\|x\|_{L^r(0,1)},
\end{equation}
and,  since \(G\subset\widetilde E\) 
\begin{equation}
\label{eq:complex-local}
        \|J_{p,r,\widetilde E}x\|_{L^1(\widetilde E)}
        \ge
        2^{-1/r}\gamma_1\|x\|_{L^r(0,1)}.
\end{equation}
For the upper estimate in
\eqref{eq:complex-Hp}, we used
\(\|a\|_{L^r(0,1)},\|b\|_{L^r(0,1)}
\le\|x\|_{L^r(0,1)}\).
The two-sided estimate in
\eqref{eq:complex-Hp} shows that
\(J_{p,r,\widetilde E}\) is an isomorphic embedding.  Its range
\(\widetilde M_{p,r,\widetilde E}
=J_{p,r,\widetilde E}(L^r(0,1))\)
is therefore closed in \(H^p\), and
\(\widetilde M_{p,r,\widetilde E}\simeq L^r(0,1)\).

Let \(f=J_{p,r,\widetilde E}x\in
\widetilde M_{p,r,\widetilde E}\).  By
\eqref{eq:complex-Hp} and
\eqref{eq:complex-local},
\[
\begin{aligned}
        \|f\|_{L^1(\widetilde E)}
        &\ge
        2^{-1/r}\gamma_1\|x\|_{L^r(0,1)}\ge
        \frac{2^{-1-1/r}\gamma_1}
             {K_t\gamma_t}
        \|f\|_{H^p}\\
        &=
        \frac{2^{-1-1/r}\kappa_{r,1}}
             {K_t\kappa_{r,t}}
        m(G)^{1-1/t}\|f\|_{H^p}.
\end{aligned}
\]
Set
\[
        A_{p,r,t}
        :=
        \frac{2^{-1-1/r}\kappa_{r,1}}
             {K_t\kappa_{r,t}}.
\]
Since \(m(G)\ge\delta^2\) and \(t>1\),
\[
        \|f\|_{L^1(\widetilde E)}
        \ge
        A_{p,r,t}\,\delta^{\,2-2/t}\|f\|_{H^p}.
\]
For \(1\le s\le p\), H\"older's inequality gives
\[
        \|f\|_{L^s(\widetilde E)}
        \ge
        \delta^{\,1/s-1}\|f\|_{L^1(\widetilde E)}  \ge
        A_{p,r,t}\,
        \delta^{\,1+1/s-2/t}\|f\|_{H^p}.
\]

For Theorem~B, take \(t_0=(p+r)/2\), put
\(A_{p,r}=A_{p,r,t_0}\), and set
\(c_{p,r,s}(\delta)
=A_{p,r}\delta^{\,1+1/s-2/t_0}\).
The construction of the subspace is independent of \(s\), and the
preceding estimate holds simultaneously for every \(1\le s\le p\).

Finally, rotate back and put
\(M_{p,r,E_0}
=\mathcal R_\omega\widetilde M_{p,r,\widetilde E}\).
Then \(M_{p,r,E_0}\simeq L^r(0,1)\), and
\[
        \|f\|_{L^s(E_0)}
        \ge
        c_{p,r,s}(\delta)\|f\|_{H^p},
        \qquad
        1\le s\le p,\quad
        f\in M_{p,r,E_0}.
\]
This proves Theorem~B.
\end{proof}

 Since the argument is valid for every
\(p<t<r\), it also gives the more general estimate recorded in
Remark~\ref{rem:local-bound}.

\begin{remark}
\label{rem:local-bound}
Let \(1\le p<r<2\), let \(E\subset\T\) be measurable with
\(\delta=m(E)>0\), and let \(p<t<r\).  The preceding construction
gives a closed subspace
\[
        M_{p,r,t,E}\subset H^p,
        \qquad
        M_{p,r,t,E}\simeq L^r(0,1),
\]
and a constant \(A_{p,r,t}>0\), independent of \(E\) and \(s\), such
that
\[
        \|f\|_{L^s(E)}
        \ge
        A_{p,r,t}\,
        \delta^{\,1+1/s-2/t}
        \|f\|_{H^p},
        \qquad
        1\le s\le p,\quad f\in M_{p,r,t,E}.
\]

For \(r=2\), let \(M_{p,2,E}\) be the subspace given by
Corollary~\ref{cor:lacunary-tail}.  In this case,
\[
        \|f\|_{L^s(E)}
        \ge
        D_p\delta^{\,1/2+1/s}\|f\|_{H^p},
        \qquad
        1\le s\le p,\quad f\in M_{p,2,E},
\]
where \(D_p>0\) is independent of \(E\) and \(s\).  No optimality in the dependence on \(\delta\) is asserted.
\end{remark}

Since \(L^r(0,1)\) contains a closed subspace isometric to
\(\ell^r\), the same construction also gives a localized analytic copy
of \(\ell^r\).  For \(r=2\), such a copy is already provided by the
dyadic lacunary tail.

%%%%%%%%%%%%%%%%%%%%%%%%%%%%%%%%%%%%%%%%%%%%%
%%%%%%%%%%%%%%%%%%%%%%%%%%%%%%%%%%%%%%%%%%%%%
\section{\texorpdfstring{The case \(q<p\)}
{The case q < p}}
\label{sec:q-lt-p}

Throughout this section, \(1\le q<p<\infty\); in particular, \(p>1\).
Every analytic self-map \(\varphi:\D\to\D\) induces a bounded operator
\(C_\varphi:H^p\to H^q\), since \(C_\varphi\) is bounded on \(H^p\)
and the inclusion \(H^p\hookrightarrow H^q\) is bounded. We determine
the two fixed-copy exponent sets and prove that compactness and strict
singularity are equivalent in this case.

\begin{lemma}
\label{lem:boundary-lower}
Let \(1\le q<p<\infty\), and let \(\varphi:\D\to\D\) be analytic.
If \(C_\varphi:H^p\to H^q\) is noncompact, then there exist a measurable
set \(F\subset\T\) with \(m(F)>0\) and a constant \(c_0>0\) such that
\[
        \|C_\varphi f\|_{H^q}
        \ge
        c_0\|f\|_{L^q(F)},
        \qquad
        f\in H^p.
\]
\end{lemma}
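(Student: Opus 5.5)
The plan is to go from noncompactness to positive boundary contact, and then read off the lower bound from the boundary part of the pullback measure.

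\emph{Step 1 (noncompactness gives boundary contact).} Since \(q<p\), Proposition~\ref{prop:classical}\textup{(i)} (Goebeler's criterion) says that \(C_\varphi:H^p\to H^q\) is compact exactly when \(m(E_\varphi)=0\). So noncompactness gives \(m(E_\varphi)>0\).

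\emph{Step 2 (density bounded below on a set).} Write \(\nu=(\mu_\varphi)_\T\). Lemma~\ref{lem:boundary-part} gives \(d\nu=h\,dm\) with \(0\le h\in L^\infty(\T)\). It also gives a measurable set \(F\subset\T\) with \(m(F)>0\) and a constant \(\delta>0\) such that \(h\ge\delta\) a.e. on \(F\).

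\emph{Step 3 (lower bound from the pullback identity).} For \(f\in H^p\subset H^q\), apply Lemma~\ref{lem:pullback-id} with exponent \(q\). Discarding the interior part of the measure, and using that \(f^\bullet=f^*\) on \(\T\), we get
\[
        \|C_\varphi f\|_{H^q}^q
        =\int_{\overline\D}|f^\bullet|^q\,d\mu_\varphi
        \ge\int_\T|f^*|^q h\,dm
        \ge\delta\int_F|f^*|^q\,dm .
\]
This yields the claim with \(c_0=\delta^{1/q}\).

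The only point needing care is well-definedness. By Lemma~\ref{lem:pullback-id}, \(f^\bullet\) is determined \(\mu_\varphi\)-a.e. independently of the Borel representative of \(f^*\). Since \(\nu\ll m\), the restriction of \(f^\bullet\) to \(\T\) agrees \(\nu\)-a.e. with any representative of \(f^*\), so the integral \(\int_\T|f^*|^q h\,dm\) makes sense. There is no real obstacle: the substantive inputs, Goebeler's theorem and the absolute continuity of the boundary part (from the \((2,2)\)-Carleson criterion), are already recorded earlier in the paper.
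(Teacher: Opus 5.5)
Your proposal is correct and follows the paper's proof essentially step for step. Like the paper, you use Goebeler's criterion (Proposition~\ref{prop:classical}(i)) to get \(m(E_\varphi)>0\), then Lemma~\ref{lem:boundary-part} for a density \(h\ge\delta\) on \(F\), then drop the interior part in the pullback identity to obtain \(c_0=\delta^{1/q}\). Your added remark on well-definedness via \(\nu\ll m\) is a harmless clarification that the paper leaves implicit.
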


\begin{proof}
By
Proposition~\ref{prop:classical}\textup{(i)},
\(m(E_\varphi)>0\).
Lemma~\ref{lem:boundary-part} gives
\(d(\mu_\varphi)_\T=h\,dm\) for some \(h\in L^\infty(\T)\), together
with a measurable set \(F\subset\T\) with \(m(F)>0\) and a constant
\(\eta>0\) such that \(h\ge\eta\) almost everywhere on \(F\). Hence
\[
        \|C_\varphi f\|_{H^q}^q
        =
        \int_{\overline\D}|f^\bullet|^q\,d\mu_\varphi
        \ge
        \int_\T |f^*|^q h\,dm
        \ge
        \eta\|f\|_{L^q(F)}^q,
        \qquad
        f\in H^p.
\]
The conclusion follows with \(c_0=\eta^{1/q}\).
\end{proof}

\begin{proposition}
\label{prop:stable-copies}
Let \(1\le q<p<r\le2\), and let \(\varphi:\D\to\D\) be analytic.
If \(C_\varphi:H^p\to H^q\) is noncompact, then it fixes copies of
both \(L^r(0,1)\) and \(\ell^r\).
\end{proposition}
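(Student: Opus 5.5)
The plan is to combine the boundary lower estimate of Lemma~\ref{lem:boundary-lower} with the localization theorem (Theorem~B). Since \(C_\varphi:H^p\to H^q\) is noncompact, Lemma~\ref{lem:boundary-lower} provides a measurable set \(F\subset\T\) with \(m(F)>0\) and a constant \(c_0>0\) such that
\[
        \|C_\varphi f\|_{H^q}\ge c_0\|f\|_{L^q(F)},\qquad f\in H^p.
\]
We then apply Theorem~B with this \(p\), the given \(r\in(p,2]\), and \(E=F\). Note that \(1\le p<r\le2\) holds because \(1\le q<p\). This yields a closed subspace \(M=M_{p,r,F}\subset H^p\) with \(M\simeq L^r(0,1)\). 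Because \(1\le q\le p\), the admissible choice \(s=q\) gives
\[
        \|f\|_{L^q(F)}\ge c_{p,r,q}(m(F))\,\|f\|_{H^p},\qquad f\in M.
\]

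Chaining the two inequalities gives \(\|C_\varphi f\|_{H^q}\ge c_0\,c_{p,r,q}(m(F))\|f\|_{H^p}\) for all \(f\in M\). Hence \(C_\varphi\) is bounded below on a closed subspace isomorphic to \(L^r(0,1)\), so it fixes a copy of \(L^r(0,1)\). For \(\ell^r\), we use the fact that \(L^r(0,1)\) contains a closed subspace isometric to \(\ell^r\), for instance the span of normalized indicators of disjoint intervals. Its image \(M'\subset M\) is a closed subspace isomorphic to \(\ell^r\), and \(C_\varphi\) remains bounded below on \(M'\). One could alternatively invoke the inclusion \(\operatorname{Fix}_L\subseteq\operatorname{Fix}_\ell\) noted in the introduction.

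There is essentially no obstacle here, since all the analytic work is already contained in Theorem~B. The only points to check are two. First, the exponent \(s=q\) must lie in the range \(1\le s\le p\) covered by Theorem~B; this is exactly where the hypothesis \(q<p\) enters. Second, the boundary function \(f^*\) used in Lemma~\ref{lem:boundary-lower} must be the same object as the one in Theorem~B's \(L^q(F)\)-norm; both are radial boundary values, so they agree.
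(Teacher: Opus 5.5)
Your proposal is correct and matches the paper's proof essentially step for step. Both combine the boundary lower estimate of Lemma~\ref{lem:boundary-lower} with Theorem~B applied to \(E=F\) and \(s=q\), which is admissible because \(q<p\); the \(\ell^r\) case then follows from the isometric copy of \(\ell^r\) inside \(L^r(0,1)\).
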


\begin{proof}
By Lemma~\ref{lem:boundary-lower}, there exist a
measurable set \(F\subset\T\) with \(m(F)>0\) and a constant
\(c_0>0\) such that
\[
        \|C_\varphi f\|_{H^q}
        \ge
        c_0\|f\|_{L^q(F)},
        \qquad
        f\in H^p.
\]

Apply Theorem~B with \(E=F\) and \(s=q\). We obtain a closed subspace
\[
        M_{p,r,F}\subset H^p,
        \qquad
        M_{p,r,F}\simeq L^r(0,1),
\]
such that
\[
        \|f\|_{L^q(F)}
        \ge
        c_{p,r,q}(m(F))\|f\|_{H^p},
        \qquad
        f\in M_{p,r,F}.
\]
Hence
\[
        \|C_\varphi f\|_{H^q}
        \ge
        c_0c_{p,r,q}(m(F))\|f\|_{H^p},
        \qquad
        f\in M_{p,r,F}.
\]
Thus \(C_\varphi\) fixes a copy of \(L^r(0,1)\). Since
\(L^r(0,1)\) contains an isometric copy of \(\ell^r\),
\(C_\varphi\) also fixes a copy of \(\ell^r\).
\end{proof}

\begin{proposition}
\label{prop:ellp-singular}
Let \(1\le q<p<2\), and let \(\varphi:\D\to\D\) be analytic. Then
\(C_\varphi:H^p\to H^q\) fixes no copy of \(\ell^p\).
\end{proposition}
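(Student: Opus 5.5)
Since \(q<p\), we factor \(C_\varphi:H^p\to H^q\) as \(\iota_{p,q}\circ C_\varphi\), where \(C_\varphi:H^p\to H^p\) is bounded and \(\iota_{p,q}:H^p\hookrightarrow H^q\) is the canonical inclusion. By the monotonicity of fixed-copy exponent sets under factorization, it suffices to show that \(\iota_{p,q}\), viewed as the restriction of the inclusion \(L^p(\T)\hookrightarrow L^q(\T)\) to \(H^p\), fixes no copy of \(\ell^p\). We argue by contradiction. Suppose \(M\subset H^p\) is a closed subspace with \(M\simeq\ell^p\) and \(\|f\|_{L^q}\ge c\|f\|_{L^p}\) for \(f\in M\). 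Then \(M\) is a subspace of \(L^p(\T)\), isomorphic to \(\ell^p\), on which the \(L^p\)- and \(L^q\)-norms are equivalent.

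The key input is Rosenthal's strong-embedding theorem (the paper's outline explicitly mentions it for this endpoint). For \(1\le q<p\), a closed subspace \(X\subset L^p\) on which the \(L^p\) and \(L^q\) topologies coincide is strongly embedded. Rosenthal proved that for \(1\le p<2\) such a subspace contains no copy of \(\ell^p\); equivalently, any subspace of \(L^p\) isomorphic to \(\ell^p\) contains almost disjointly supported normalized sequences. I would make this concrete as follows. Take the normalized basis \((e_n)\) of \(M\), equivalent to the unit basis of \(\ell^p\). It is weakly null because \(p>1\). By the Kadec--Pe{\l}czy\'nski dichotomy in \(L^p\), either \((|e_n|^p)\) is uniformly integrable, or a subsequence is a small perturbation of a disjointly supported sequence.

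In the uniformly integrable case, the equi-integrability together with weak nullness lets us pass to a subsequence that behaves like a sequence of independent-like, mean-zero variables. Its span is then isomorphic to \(\ell^2\) by the Kadec--Pe{\l}czy\'nski theorem for \(p<2\) (in Rosenthal's form: equi-integrable sequences spanning \(\ell^p\) cannot exist when \(p<2\)). This contradicts \(M\simeq\ell^p\), because \(\ell^2\not\simeq\) any subspace of \(\ell^p\) (Lemma~\ref{lem:ellr-rigidity}). So some subsequence satisfies \(\|e_n-d_n\|_{L^p}\to0\) with \(d_n\) disjointly supported on sets \(A_n\), and \(m(A_n)\to0\) can be arranged. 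Hölder's inequality on \(A_n\) then gives \(\|d_n\|_{L^q}\le m(A_n)^{1/q-1/p}\|d_n\|_{L^p}\to0\), hence \(\|e_n\|_{L^q}\to0\) while \(\|e_n\|_{L^p}=1\). This contradicts the lower bound \(\|f\|_{L^q}\ge c\|f\|_{L^p}\) on \(M\).

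The main obstacle is the uniformly integrable branch, namely excluding a \(p\)-equi-integrable sequence in \(L^p\), \(p<2\), equivalent to the \(\ell^p\) basis. My first attempt would be to cite Rosenthal's theorem directly: for \(1\le q<p<2\), subspaces of \(L^p\) on which the \(L^q\)-norm is equivalent are \(p\)-equi-integrable on the unit ball and do not contain \(\ell^p\). Alternatively, I would use the fact that \(L^p\)-norm-equivalence to \(L^q\) on \(M\) forces equi-integrability of the unit ball of \(M\) in \(L^{p}\), and then invoke the Kadec--Pe{\l}czy\'nski theorem that such subspaces of \(L^p\) (\(p<2\)) are isomorphic to subspaces of \(L^s\) for some \(s>p\) (via a change of density). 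Since \(\ell^p\) does not embed in \(L^s\) for \(p<s\le2\) by Proposition~\ref{prop:ambient-ellr}-type classification, this gives the contradiction.
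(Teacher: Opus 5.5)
Your reduction and your fallback are exactly the paper's proof, but the "concrete" Kadec--Pe{\l}czy\'nski argument you present as the main line does not work, so drop it and rely on the citation. The reduction (factoring through \(\iota_{p,q}\)) is equivalent to the paper's use of Littlewood's bound on \(H^q\) to get \(\|f\|_{H^p}\le A\|f\|_{H^q}\) on \(M\). The fallback cites Rosenthal's theorem that a subspace of \(L^p\), \(1\le p<2\), on which the \(L^p\)- and \(L^q\)-norms are equivalent contains no \(\ell^p\). One detail to add: Rosenthal's Theorem~13 is phrased with the \(L^1\)-norm. The paper therefore first interpolates, \(\|f\|_q\le\|f\|_1^{\theta}\|f\|_p^{1-\theta}\) with \(\frac1q=\theta+\frac{1-\theta}{p}\), to obtain \(\|f\|_p\le A^{1/\theta}\|f\|_1\) on \(M\), and only then applies (2)\(\Rightarrow\)(3).

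\emph{The equi-integrable branch fails.} You claim that a \(p\)-equi-integrable weakly null sequence in \(L^p\), \(p<2\), has a subsequence spanning \(\ell^2\). This is false: the Hilbertian conclusion of Kadec--Pe{\l}czy\'nski is a \(p>2\) phenomenon. For \(p<r<2\), i.i.d.\ symmetric \(r\)-stable variables are \(p\)-equi-integrable (identically distributed with finite \(p\)-th moment) and weakly null, yet they span \(\ell^r\). This is exactly the mechanism behind Theorem~B. Ruling out \(\ell^p\) in this branch is the whole content of Rosenthal's theorem and cannot be reduced to \(\ell^2\).

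\emph{The other branch is misstated.} For a fixed sequence, failure of equi-integrability only yields a splitting \(e_{n_k}=g_k+h_k\), with \((g_k)\) equi-integrable and \(h_k=e_{n_k}\mathbf 1_{B_k}\) disjointly supported. It does not make \(e_{n_k}\) close to a disjoint sequence. Your H\"older estimate therefore gives \(\|h_k\|_q\to0\), not \(\|e_{n_k}\|_q\to0\).

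\emph{The change-of-density alternative.} This is Rosenthal's theorem, not Kadec--Pe{\l}czy\'nski's. It also requires excluding \(\ell^p\) from \(L^s\) for every \(s>p\), including \(s>2\), not only \(p<s\le2\).

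\emph{A citation-free repair.} The splitting plus Khintchine's inequality suffices. By hypothesis and the \(\ell^p\)-equivalence of the basis, every signed sum satisfies \(\|\sum_{k\le N}\pm e_{n_k}\|_q\ge c'N^{1/p}\). On the other hand:
\begin{enumerate}
\item Average over signs and use \(|e_{n_k}|^2=|g_k|^2+|h_k|^2\) with \(q\le2\).
\item Truncate the \(g_k\) at a level \(K\) so that their tails have \(L^p\)-norm at most \(\varepsilon\).
\item Pass to a subsequence with \(\sum_k m(B_k)^{1-q/p}<\infty\).
\end{enumerate}
This gives \(\mathbb E\|\sum_{k\le N}\pm e_{n_k}\|_q^q\lesssim (K\sqrt N+\varepsilon N^{1/p})^q+O(1)\). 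That is impossible for \(p<2\) once \(\varepsilon\) is small and \(N\) is large.
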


\begin{proof}
Suppose, to the contrary, that there exist a closed subspace
\(M\subset H^p\), with \(M\simeq\ell^p\), and a constant \(c>0\)
such that
\[
        \|C_\varphi f\|_{H^q}
        \ge
        c\|f\|_{H^p},
        \qquad
        f\in M.
\]
By Littlewood's subordination principle, there is a constant
\(B_{q,\varphi}>0\) such that
\[
        \|C_\varphi f\|_{H^q}
        \le
        B_{q,\varphi}\|f\|_{H^q},
        \qquad
        f\in H^q.
\]
Therefore, with \(A=B_{q,\varphi}/c\),
\[
        \|f\|_{H^p}
        \le
        A\|f\|_{H^q},
        \qquad
        f\in M.
\]

Choose \(0<\theta\leq 1\) so that
\(\frac1q=\theta+\frac{1-\theta}{p}\).
By H\"older's inequality,
\[
        \|f\|_{H^q}
        \le
        \|f\|_{H^1}^{\theta}
        \|f\|_{H^p}^{1-\theta},
        \qquad
        f\in H^p.
\]
Combining the preceding two estimates gives
\[
        \|f\|_{H^p}
        \le
        A^{1/\theta}\|f\|_{H^1},
        \qquad
        f\in M.
\]

Under the boundary-value identification,
\(H^p\) is a closed subspace of \(L^p(\T,m)\), so we may regard
\(M\) as a closed infinite-dimensional subspace of \(L^p(\T,m)\).
Moreover, \(M\simeq\ell^p\) and
\[
        \|f\|_{L^p(\T)}
        \le
        A^{1/\theta}\|f\|_{L^1(\T)},
        \qquad
        f\in M.
\]
This is condition~\textup{(2)} of
Rosenthal~\cite[Theorem~13, p.~370]{Rosenthal1973}, applied with
\(X=M\). 
The implication
\textup{(2)}\(\Rightarrow\)\textup{(3)} shows that \(M\) contains no
subspace isomorphic to \(\ell^p\), contradicting \(M\simeq\ell^p\).
\end{proof}
We can now determine both fixed-copy exponent sets.

\begin{theorem}
\label{thm:q-lt-p-class}
Let \(1\le q<p<\infty\), let \(1\le r<\infty\), and let
\(\varphi:\D\to\D\) be analytic.

\begin{enumerate}[label=\textup{(\roman*)}]
\item If \(1<p<2\), the following assertions are equivalent:
\begin{enumerate}[label=\textup{(\alph*)}]
\item \(C_\varphi:H^p\to H^q\) fixes a copy of \(L^r(0,1)\);
\item \(C_\varphi:H^p\to H^q\) fixes a copy of \(\ell^r\);
\item \(m(E_\varphi)>0\) and \(p<r\le2\).
\end{enumerate}

\item If \(2\le p<\infty\), the following assertions are equivalent:
\begin{enumerate}[label=\textup{(\alph*)}]
\item \(C_\varphi:H^p\to H^q\) fixes a copy of \(L^r(0,1)\);
\item \(C_\varphi:H^p\to H^q\) fixes a copy of \(\ell^r\);
\item \(m(E_\varphi)>0\) and \(r=2\).
\end{enumerate}
\end{enumerate}

Equivalently,
\[
        \operatorname{Fix}_{L}(C_\varphi)
        =
        \operatorname{Fix}_{\ell}(C_\varphi)
        =
        \begin{cases}
        \varnothing,
                & m(E_\varphi)=0,\\[1mm]
        (p,2],
                & m(E_\varphi)>0,\quad 1<p<2,\\[1mm]
        \{2\},
                & m(E_\varphi)>0,\quad 2\le p<\infty.
        \end{cases}
\]
\end{theorem}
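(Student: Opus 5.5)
The plan is to prove, in each part, the cycle of implications (a)\(\Rightarrow\)(b)\(\Rightarrow\)(c)\(\Rightarrow\)(a), using the results of this section together with Proposition~\ref{prop:ambient-ellr} and Proposition~\ref{prop:classical}\textup{(i)}. The exponent-set formula is then a restatement of the two equivalences.

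\emph{(a)\(\Rightarrow\)(b).} This step is immediate. \(L^r(0,1)\) contains a complemented, in fact isometric, copy of \(\ell^r\). If \(T\) is bounded below on a copy \(M\) of \(L^r(0,1)\), it is therefore bounded below on the copy of \(\ell^r\) sitting inside \(M\).

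\emph{(b)\(\Rightarrow\)(c).} Suppose \(C_\varphi:H^p\to H^q\) fixes a copy of \(\ell^r\).
\begin{enumerate}[label=\textup{(\arabic*)}]
\item Fixing a copy of an infinite-dimensional space means \(C_\varphi\) is not strictly singular. Hence \(C_\varphi\) is not compact.
\item By Proposition~\ref{prop:classical}\textup{(i)} (Goebeler's theorem), noncompactness gives \(m(E_\varphi)>0\).
\item The fixed copy is a closed subspace of \(H^p\) isomorphic to \(\ell^r\). Proposition~\ref{prop:ambient-ellr} with \(s=p\) then gives \(p\le r\le2\) when \(p\le2\), and \(r\in\{2,p\}\) when \(p>2\).
\item For \(1<p<2\), the endpoint \(r=p\) is excluded by Proposition~\ref{prop:ellp-singular}, leaving \(p<r\le2\).
\item For \(p=2\), the only remaining value is \(r=2\).
\item For \(p>2\), we must still exclude \(r=p\). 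This is the step needing an additional argument.
\end{enumerate}
For (6), I would repeat the reasoning of Proposition~\ref{prop:ellp-singular}. Littlewood's subordination principle gives \(\|C_\varphi f\|_{H^q}\lesssim\|f\|_{H^q}\), so a lower bound on a copy \(M\simeq\ell^p\) yields \(\|f\|_{H^p}\le A\|f\|_{H^q}\) on \(M\). By H\"older interpolation between \(L^1\) and \(L^p\), this improves to \(\|f\|_{L^p}\lesssim\|f\|_{L^1}\) on \(M\). For \(p>2\), Kadec--Pe{\l}czy\'nski then says that on a subspace of \(L^p\) where the \(L^p\) and \(L^2\) (or \(L^1\)) norms are equivalent, the subspace is isomorphic to a Hilbert space. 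Concretely, such a subspace contains no almost disjointly supported sequence, so the dichotomy of Lemma~\ref{lem:kp-subspace} forces it to be Hilbertian. But \(\ell^p\not\simeq\ell^2\) for \(p\ne2\) by Lemma~\ref{lem:ellr-rigidity}, a contradiction. I would cite \cite[Theorem~3]{KadecPelczynski1962} for the fact that equivalence of the \(L^p\) and \(L^2\) norms on a subspace of \(L^p\), \(p>2\), makes that subspace Hilbertian. This excludes \(r=p\) and leaves \(r=2\).

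\emph{(c)\(\Rightarrow\)(a).}
\begin{itemize}
\item \textbf{\(1<p<2\), \(p<r\le2\).} Since \(m(E_\varphi)>0\), \(C_\varphi\) is noncompact by Proposition~\ref{prop:classical}\textup{(i)}. Proposition~\ref{prop:stable-copies} then gives a fixed copy of \(L^r(0,1)\).
\item \textbf{\(p\ge2\), \(r=2\).} Lemma~\ref{lem:boundary-lower} gives a set \(F\) with \(m(F)>0\) and \(\|C_\varphi f\|_{H^q}\ge c_0\|f\|_{L^q(F)}\). Corollary~\ref{cor:lacunary-set}, with \(s=q\le p\) and \(E=F\), gives \(\|f\|_{L^q(F)}\ge c\|f\|_{H^p}\) on the dyadic lacunary space \(M_2\simeq\ell^2\simeq L^2(0,1)\). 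Hence \(C_\varphi\) fixes a copy of \(L^2(0,1)\).
\end{itemize}

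\emph{Exponent sets.} The displayed formula follows directly. When \(m(E_\varphi)=0\), condition (c) fails for every \(r\), so both sets are empty. When \(m(E_\varphi)>0\), the sets are \((p,2]\) for \(1<p<2\) and \(\{2\}\) for \(p\ge2\). Note that \(p>q\ge1\) forces \(p>1\), so cases (i) and (ii) cover everything.

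The main obstacle is excluding \(r=p\) when \(p>2\), since it requires converting the \(H^q\)-domination into an \(L^2\)-versus-\(L^p\) norm equivalence and then invoking the Hilbertian form of Kadec--Pe{\l}czy\'nski rather than only the dichotomy as stated in Lemma~\ref{lem:kp-subspace}.
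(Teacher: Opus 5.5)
Your proof is correct. It follows the paper everywhere except in excluding \(r=p\) when \(p>2\), where you take a genuinely different and longer route.

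\textbf{The paper's argument for this step.} Since \(C_\varphi\) is bounded below on \(M\simeq\ell^p\), the image \(C_\varphi(M)\) is a closed copy of \(\ell^p\) in \(H^q\). Proposition~\ref{prop:ambient-ellr} with \(s=q\) then rules this out directly. If \(q\le2\), it would force \(p\le2\). If \(q>2\), it would force \(p\in\{2,q\}\), which is impossible since \(q<p\) and \(p>2\). So the ``main obstacle'' you identify disappears once the ambient restriction is applied to the target as well as the source. This argument uses nothing about \(C_\varphi\) beyond boundedness \(H^p\to H^q\).

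\textbf{Your argument for this step.} You use Littlewood subordination to obtain \(\|f\|_{H^p}\le A\|f\|_{H^q}\) on \(M\). This is specific to composition operators, since it needs \(C_\varphi\) to be bounded on \(H^q\). You then appeal to a Hilbertian criterion, which is elementary in the direction you need:
\begin{itemize}
\item On \((\T,m)\) one has \(\|f\|_{L^2}\le\|f\|_{L^p}\) for \(p>2\).
\item So it suffices to get \(\|f\|_{L^p}\lesssim\|f\|_{L^2}\) on \(M\). This follows from your \(L^1\) bound. It also follows directly: use \(\|f\|_{L^q}\le\|f\|_{L^2}\) if \(q\le2\), or interpolate between \(L^2\) and \(L^p\) if \(2<q<p\).
\item Then \(M\) is a closed subspace of \(L^2\) carrying an equivalent norm, hence Hilbertian.
\item This contradicts \(M\simeq\ell^p\) by Lemma~\ref{lem:ellr-rigidity}.
\end{itemize}
No appeal to Kadec--Pe{\l}czy\'nski is needed for this.

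\textbf{A flawed justification.} Your parenthetical appeal to Lemma~\ref{lem:kp-subspace} does not work as stated. That lemma only produces some subspace isomorphic to \(\ell^2\) or \(\ell^p\), and for \(M\simeq\ell^p\) this gives no contradiction by itself. The elementary argument above is the right justification.
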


\begin{proof}
Since \(L^r(0,1)\) contains an isometric copy of \(\ell^r\),
\textup{(a)} implies \textup{(b)} for both (i) and (ii).

{(i) \bf Case \(1<p<2\)}. \textup{(b)}\(\Rightarrow\)\textup{(c)}.  If \(C_\varphi\) fixes a copy of
\(\ell^r\), then it is noncompact. By
Proposition~\ref{prop:classical}\textup{(i)}, \(m(E_\varphi)>0\).
Proposition~\ref{prop:ambient-ellr} gives \(p\le r\le2\), while
Proposition~\ref{prop:ellp-singular} excludes \(r=p\). Hence
\(p<r\le2\), as desired.

\textup{(c)}\(\Rightarrow\)\textup{(a)}.  Suppose that \(m(E_\varphi)>0\) and \(p<r\le2\).
By Proposition~\ref{prop:classical}\textup{(i)},
\(C_\varphi:H^p\to H^q\) is noncompact. Hence
Proposition~\ref{prop:stable-copies} gives a
fixed copy of \(L^r(0,1)\).

{(ii) \bf Case \(p\ge2\)}.  \textup{(b)} \(\Rightarrow\)  \textup{(c)}.  Suppose that \(C_\varphi\) fixes a copy of
\(\ell^r\). Then it is noncompact, so
Proposition~\ref{prop:classical}\textup{(i)} gives \(m(E_\varphi)>0\).
Let \(M\subset H^p\) be a closed subspace isomorphic to \(\ell^r\) on
which \(C_\varphi\) is bounded below. Then \(C_\varphi(M)\) is a closed
subspace of \(H^q\) isomorphic to \(\ell^r\). We may therefore apply
Proposition~\ref{prop:ambient-ellr} to both \(M\subset H^p\) and
\(C_\varphi(M)\subset H^q\).   If \(p=2\), the proposition applied to \(H^2\) gives \(r=2\).
Suppose that \(p>2\). The source space gives \(r\in\{2,p\}\).
If \(q\le2\), the target space gives \(q\le r\le2\), and hence
\(r=2\). If \(q>2\), it gives \(r\in\{2,q\}\). Since \(q<p\), the
intersection of the two admissible sets is again \(\{2\}\).

\textup{(c)} \(\Rightarrow\) \textup{(a)}.  Suppose that \(m(E_\varphi)>0\) and \(r=2\).
By Lemma~\ref{lem:boundary-lower}, there exist a
measurable set \(F\subset\T\), with \(m(F)>0\), and a constant
\(c_0>0\) such that
\[
        \|C_\varphi f\|_{H^q}
        \ge
        c_0\|f\|_{L^q(F)},
        \qquad
        f\in H^p.
\]
Corollary~\ref{cor:lacunary-set}, applied with \(E=F\) and
\(s=q\), gives
\[
        \|f\|_{L^q(F)}
        \ge
        c_{p,q,F}\|f\|_{H^p},
        \qquad
        f\in M_2.
\]
Consequently,
\[
        \|C_\varphi f\|_{H^q}
        \ge
        c_0c_{p,q,F}\|f\|_{H^p},
        \qquad
        f\in M_2.
\]
Thus \(C_\varphi\) is bounded below on \(M_2\). Since
\(M_2\simeq\ell^2\simeq L^2(0,1)\), \(C_\varphi\) fixes copies of
both \(\ell^2\) and \(L^2(0,1)\). 
\end{proof}

\begin{theorem}
\label{thm:q-lt-p-identity}
Let \(1\le q<p<\infty\), and let \(\varphi:\D\to\D\) be analytic.
Then the following assertions are equivalent:
\begin{enumerate}[label=\textup{(\roman*)}]
\item \(C_\varphi:H^p\to H^q\) is compact;
\item \(m(E_\varphi)=0\);
\item \(C_\varphi:H^p\to H^q\) is strictly singular.
\end{enumerate}
\end{theorem}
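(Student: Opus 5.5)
The plan is to prove the cycle (i)\(\Leftrightarrow\)(ii) and (i)\(\Rightarrow\)(iii)\(\Rightarrow\)(ii).

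The equivalence (i)\(\Leftrightarrow\)(ii) is exactly Goebeler's criterion, recorded as Proposition~\ref{prop:classical}\textup{(i)}, so nothing new is needed there. For (i)\(\Rightarrow\)(iii), we use the standard fact that compact operators are strictly singular. If \(C_\varphi\) were bounded below on an infinite-dimensional closed subspace \(M\), then \(C_\varphi|_M\) would be an isomorphism onto a closed subspace. Since it is also compact, the unit ball of \(M\) would be relatively compact, contradicting Riesz's lemma.

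The substantive direction is (iii)\(\Rightarrow\)(ii), which we prove by contraposition. Assume \(m(E_\varphi)>0\). Then Theorem~\ref{thm:q-lt-p-class} shows that \(\operatorname{Fix}_{\ell}(C_\varphi)\) is nonempty.
\begin{itemize}
\item If \(1<p<2\), it equals \((p,2]\), which contains \(2\).
\item If \(p\ge2\), it equals \(\{2\}\).
\end{itemize}
In either case \(C_\varphi\) fixes a copy of \(\ell^2\). The subspace \(M_2\) from the proof of Theorem~\ref{thm:q-lt-p-class}\textup{(ii)} exhibits this directly for all \(p\): Lemma~\ref{lem:boundary-lower} combined with Corollary~\ref{cor:lacunary-set} gives a lower bound for \(C_\varphi\) on \(M_2\). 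Since \(M_2\simeq\ell^2\) is infinite-dimensional, \(C_\varphi\) is not strictly singular.

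The range \(1\le q<p<\infty\) forces \(p>1\), so every case is covered by Theorem~\ref{thm:q-lt-p-class}. The only point needing care is that Lemma~\ref{lem:boundary-lower} is stated under a noncompactness hypothesis. We obtain noncompactness from \(m(E_\varphi)>0\) via Proposition~\ref{prop:classical}\textup{(i)}. No step presents a real obstacle; the work was done in the preceding results.
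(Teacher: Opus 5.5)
Your proposal is correct and follows the paper's proof essentially verbatim: Goebeler's criterion (Proposition~\ref{prop:classical}\textup{(i)}) gives (i)\(\Leftrightarrow\)(ii), compact implies strictly singular gives (i)\(\Rightarrow\)(iii), and Theorem~\ref{thm:q-lt-p-class} supplies a fixed copy of \(\ell^2\) when \(m(E_\varphi)>0\), which gives (iii)\(\Rightarrow\)(ii). Your extra remark that the dyadic space \(M_2\) handles every \(p\) directly via Lemma~\ref{lem:boundary-lower} and Corollary~\ref{cor:lacunary-set} is also valid.
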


\begin{proof}
By
Proposition~\ref{prop:classical}\textup{(i)},
compactness of \(C_\varphi:H^p\to H^q\) is equivalent to
\(m(E_\varphi)=0\). Every compact operator is strictly singular, so
\textup{(i)} implies \textup{(iii)}. If \textup{(ii)} fails, then
Theorem~\ref{thm:q-lt-p-class} shows that
\(C_\varphi\) fixes a copy of \(\ell^2\), and hence is not strictly
singular. Therefore \textup{(iii)} implies \textup{(ii)}.
\end{proof} \smallskip

%%%%%%%%%%%%%%%%%%%%%%%%%%%%%%%%%%%%%%%%%%
\section{\texorpdfstring{The case \(p<q\)}
{The case p < q}}
\label{sec:p-lt-q}

Throughout this section, we assume that \(1\le p<q<\infty\), \(\varphi:\D\to\D\) is analytic, and \(C_\varphi:H^p\to H^q\) is bounded. By
Corollary~\ref{cor:boundary-obstruction}, the operators
\[
        C_\varphi:H^p\to H^p
        \quad\text{and}\quad
        C_\varphi:H^t\to H^q
        \quad (p<t<\infty)
\]
are compact. The compactness of \(C_\varphi:H^t\to H^q\) has the
following multiplier consequence.

\begin{lemma}
\label{lem:mult-compact}
Let \(1\le p<q<\infty\), let \(t>p\), and let
\(\varphi:\D\to\D\) be analytic. Set
\(a=pt/(t-p)\).
If \(C_\varphi:H^p\to H^q\) is bounded and \(g\in H^a\), then
\(C_\varphi M_g:H^t\to H^q\) is compact.
\end{lemma}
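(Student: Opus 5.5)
The plan is to write the operator as a product of a bounded map and a compact one: first \(M_g:H^t\to H^p\), then \(C_\varphi:H^p\to H^q\). But \(C_\varphi:H^p\to H^q\) is only known to be bounded, not compact. So I will instead route the product through an intermediate Hardy space strictly above \(p\), where Corollary~\ref{cor:boundary-obstruction} supplies compactness.

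Since \(1/p=1/t+1/a\), choose \(u\) with \(p<u<t\) and set \(b=ut/(t-u)\), so that \(1/u=1/t+1/b\). Then \(b>a\), so the inclusion \(H^a\subset H^b\) does not help. Therefore I will not try to put \(g\) into \(H^b\) directly. Instead I approximate \(g\) in \(H^a\) by bounded functions \(g_n\), for example polynomials (the partial sums of the Fejér means, or dilations \(g_\rho(z)=g(\rho z)\)), which are dense in \(H^a\).

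For each \(n\), H\"older's inequality shows that \(M_{g_n}:H^t\to H^t\) is bounded. Corollary~\ref{cor:boundary-obstruction} gives that \(C_\varphi:H^t\to H^q\) is compact. Hence each \(C_\varphi M_{g_n}:H^t\to H^q\) is compact.

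Next I show that \(C_\varphi M_{g_n}\to C_\varphi M_g\) in operator norm from \(H^t\) to \(H^q\). For \(f\in H^t\), H\"older's inequality on \(\T\) with \(1/p=1/t+1/a\) gives
\[
\|(g-g_n)f\|_{H^p}\le\|g-g_n\|_{H^a}\|f\|_{H^t}.
\]
The product \((g-g_n)f\) lies in \(H^p\), since it is analytic with boundary values in \(L^p\) and belongs to the appropriate Smirnov class. Boundedness of \(C_\varphi:H^p\to H^q\) then yields
\[
\|C_\varphi M_g-C_\varphi M_{g_n}\|_{H^t\to H^q}\le\|C_\varphi\|_{H^p\to H^q}\,\|g-g_n\|_{H^a}\to0.
\]
The set of compact operators is closed in operator norm, so \(C_\varphi M_g\) is compact. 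This also shows that \(C_\varphi M_g\) is well defined and bounded, because \(M_g:H^t\to H^p\) is bounded by the same H\"older estimate.

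I expect no serious obstacle. The only points needing care are that multiplication by \(g\in H^a\) maps \(H^t\) into \(H^p\) as analytic functions, which is the standard product property of Hardy spaces, and that polynomials are dense in \(H^a\), which holds since \(a<\infty\).
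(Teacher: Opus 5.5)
Your proposal is correct and follows the paper's proof essentially step for step. You approximate \(g\) in \(H^a\) by polynomials, use compactness of \(C_\varphi:H^t\to H^q\) from Corollary~\ref{cor:boundary-obstruction}, get norm convergence from H\"older's inequality and boundedness of \(C_\varphi:H^p\to H^q\), and conclude by closedness of the compact operators. The opening detour through an intermediate exponent \(u\) is unnecessary, since the argument works directly on \(H^t\), but it does no harm.
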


\begin{proof}
Since \(1/p=1/a+1/t\), H\"older's inequality gives
\(\|gh\|_{H^p}\le\|g\|_{H^a}\|h\|_{H^t}\) for \(h\in H^t\).
Thus \(M_g:H^t\to H^p\) is bounded.

Since \(a<\infty\), analytic polynomials are dense in \(H^a\).
Choose polynomials \(P_n\) such that \(P_n\to g\) in \(H^a\).
Since \(P_n\in H^\infty\), each
\(M_{P_n}:H^t\to H^t\) is bounded. By
Corollary~\ref{cor:boundary-obstruction},
\(C_\varphi:H^t\to H^q\) is compact, and hence so is
\(C_\varphi M_{P_n}:H^t\to H^q\) for every \(n\).

Moreover,
\[
\begin{aligned}
\|C_\varphi M_g-C_\varphi M_{P_n}\|_{H^t\to H^q}
&\le
\|C_\varphi\|_{H^p\to H^q}
\|M_g-M_{P_n}\|_{H^t\to H^p}\\
&\le
\|C_\varphi\|_{H^p\to H^q}
\|g-P_n\|_{H^a}
\longrightarrow0.
\end{aligned}
\]
Since the compact operators from \(H^t\) to \(H^q\) form a
closed subspace of \(\mathcal L(H^t,H^q)\),
\(C_\varphi M_g:H^t\to H^q\) is compact.
\end{proof}

We first consider the case \(1\le p<2\).

\begin{theorem}
\label{thm:ss-below2}
Let \(1\le p<2\), \(p<q<\infty\), and let
\(\varphi:\D\to\D\) be analytic. If
\(C_\varphi:H^p\to H^q\) is bounded, then it is strictly singular.
\end{theorem}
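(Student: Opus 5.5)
We argue by contradiction, following the outline in the introduction. Suppose \(C_\varphi:H^p\to H^q\) is bounded but not strictly singular. Then there are an infinite-dimensional closed subspace \(M\subset H^p\) and \(c>0\) with \(\|C_\varphi f\|_{H^q}\ge c\|f\|_{H^p}\) for \(f\in M\). By Corollary~\ref{cor:boundary-obstruction}, \(C_\varphi:H^p\to H^p\) is compact. Since \(q>p\), we have \(\|C_\varphi f\|_{H^p}\le\|C_\varphi f\|_{H^q}\). The strategy is to shrink \(M\) to a subspace satisfying the hypotheses of the Kwapie\'n--Pe{\l}czy\'nski factorizations. We then factor the inclusion of that subspace through some \(H^t\) with \(t>p\), and obtain a contradiction from Lemma~\ref{lem:mult-compact}.

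\emph{Reducing \(M\).} For \(1<p<2\) we need a subspace containing no copy of \(\ell^p\). If \(M\) contains a copy \(M'\simeq\ell^p\), then \(C_\varphi(M')\) is a copy of \(\ell^p\) inside \(H^q\). Proposition~\ref{prop:ambient-ellr} applied to \(H^q\) then requires \(q\le p\le 2\) if \(q\le2\), or \(p\in\{2,q\}\) if \(q>2\). Both are impossible since \(p<q\) and \(p<2\). Hence \(M\) contains no copy of \(\ell^p\), and Lemma~\ref{lem:kp-below2} applies with \(X=M\), \(s=p\). For \(p=1\) we need a reflexive subspace. Since \(C_\varphi|_M\) is an isomorphism onto \(C_\varphi(M)\subset H^q\), and \(H^q\) is reflexive for \(q>1\), the closed subspace \(C_\varphi(M)\) is reflexive. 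Therefore \(M\) is reflexive, and Lemma~\ref{lem:kp-H1} applies with \(X=M\). This is straightforward; I expect the only subtle point to be that the relevant structural information comes from the target side, via the isomorphism \(C_\varphi|_M\).

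\emph{Factorization and contradiction.} In either case we obtain \(t>p\), an outer \(g\in H^a\) with \(a=pt/(t-p)\), and bounded maps \(U:M\to H^t\), \(Uf=f/g\), and \(V=M_g:H^t\to H^p\) with \(VU=j_M\). Then
\[
C_\varphi|_M=C_\varphi\, j_M=(C_\varphi M_g)\,U ,
\]
where \(C_\varphi M_g:H^t\to H^q\) is compact by Lemma~\ref{lem:mult-compact}. So \(C_\varphi|_M:M\to H^q\) is compact. A compact operator that is bounded below on a closed subspace forces that subspace to be finite dimensional, because its unit ball would then be relatively compact. This contradicts \(\dim M=\infty\).

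The main thing to verify is that the exponent conventions of the Kwapie\'n--Pe{\l}czy\'nski lemmas match Lemma~\ref{lem:mult-compact}. In Lemma~\ref{lem:kp-below2}, \(a=st/(t-s)\) with \(s=p\), and in Lemma~\ref{lem:kp-H1}, \(a=t/(t-1)\) with \(p=1\). Both agree with \(a=pt/(t-p)\), so the Hölder bound \(M_g:H^t\to H^p\) holds and the approximation argument goes through. The reduction of \(M\) described above handles the remaining hypotheses of those lemmas.
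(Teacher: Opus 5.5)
Your proposal is correct and follows essentially the same argument as the paper's proof. You obtain reflexivity of \(M\) from \(H^q\) when \(p=1\), and you exclude copies of \(\ell^p\) in \(M\) via Proposition~\ref{prop:ambient-ellr} on the target side when \(1<p<2\); you then apply the Kwapie\'n--Pe{\l}czy\'nski factorization and Lemma~\ref{lem:mult-compact} to make \(C_\varphi|_M\) compact (the compactness of \(C_\varphi:H^p\to H^p\) and the norm comparison you mention at the outset are not needed).
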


\begin{proof}
Put \(T=C_\varphi\), and suppose that \(T\) is bounded below on an
infinite-dimensional closed subspace \(M\subset H^p\).

Assume first that \(p=1\). Since \(q>1\), the space \(H^q\) is
reflexive. The restriction \(T|_M\) is an isomorphism from \(M\) onto
the closed subspace \(T(M)\subset H^q\), and hence \(M\) is reflexive.
Lemma~\ref{lem:kp-H1} gives \(t>1\), an outer
function \(g\in H^a\), where \(\frac1 a=1-\frac 1 t\), and a bounded operator
\(U:M\to H^t\) such that \(f=gUf\) for every \(f\in M\).

Assume now that \(1<p<2\). We first show that \(M\) contains no closed
subspace isomorphic to \(\ell^p\). If \(Y\subset M\) were such a
subspace, then \(T(Y)\) would be a closed subspace of \(H^q\)
isomorphic to \(\ell^p\). If \(q\le2\),
Proposition~\ref{prop:ambient-ellr} would give \(q\le p\le2\),
contrary to \(p<q\). If \(q>2\), the same proposition would give
\(p\in\{2,q\}\), contrary to \(p<2\).

Lemma~\ref{lem:kp-below2} therefore gives
\(t\in(p,2)\), an outer function \(g\in H^a\), where
\(\frac1 a=\frac1 p-\frac1 t\), and a bounded operator \(U:M\to H^t\) such that
\(f=gUf\) for every \(f\in M\).

In either case,
\(T|_M=(C_\varphi M_g)U.
\)
By Lemma~\ref{lem:mult-compact},
\(C_\varphi M_g:H^t\to H^q\) is compact. Hence \(T|_M\) is compact,
which is impossible because a compact operator cannot be bounded
below on an infinite-dimensional space. Therefore \(T\) is strictly
singular.
\end{proof}

For \(p\ge2\), the following proposition shows that a non-strictly
singular operator is bounded below on a subspace isomorphic to
\(\ell^2\).

\begin{proposition}
\label{prop:ss-criterion}
Let \(2\le p<q<\infty\), and let
\(T:H^p\to H^q\) be a bounded operator. Then \(T\) is not strictly
singular if and only if it fixes a copy of \(\ell^2\).
\end{proposition}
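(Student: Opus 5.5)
One direction is immediate: if \(T\) fixes a copy of \(\ell^2\), then \(T\) is bounded below on an infinite-dimensional closed subspace, so it is not strictly singular. The work is in the converse.

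Suppose \(T\) is bounded below on an infinite-dimensional closed subspace \(M\subset H^p\). We want to pass to a further subspace of \(M\) that is isomorphic to \(\ell^2\), since the restriction of \(T\) to any subspace of \(M\) stays bounded below.

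If \(p=2\), \(M\) is a closed subspace of a Hilbert space and is infinite-dimensional, so \(M\simeq\ell^2\) and we are done.

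Assume now \(2<p<q\). Via boundary values, \(M\) is an infinite-dimensional closed subspace of \(L^p(\T)\). Lemma~\ref{lem:kp-subspace} then gives a closed subspace \(Y\subset M\) isomorphic to \(\ell^2\) or to \(\ell^p\).

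\begin{itemize}
\item If \(Y\simeq\ell^2\), then \(T\) is bounded below on \(Y\), so \(T\) fixes a copy of \(\ell^2\).
\item If \(Y\simeq\ell^p\), then \(T(Y)\) is a closed subspace of \(H^q\) isomorphic to \(\ell^p\), because \(T|_Y\) is an isomorphism onto its range. Proposition~\ref{prop:ambient-ellr} applied in \(H^q\) with \(q>2\) forces \(p\in\{2,q\}\). This contradicts \(2<p<q\), so this alternative cannot occur.
\end{itemize}

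The only genuine input is the Kadec--Pe{\l}czy\'nski dichotomy together with the ambient restriction in the target space, and both are already available. The step needing the most care is keeping track of the complex scalars. Lemma~\ref{lem:kp-subspace} and Proposition~\ref{prop:ambient-ellr} are stated for the complex spaces used in the paper, so I would simply invoke them. If needed, one could reduce to the real case by realification, as in the proof of Lemma~\ref{lem:kp-hilbert}.
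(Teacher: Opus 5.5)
Your proposal is correct and follows the paper's proof step for step. You handle \(p=2\) via the Hilbert space structure, and for \(2<p<q\) you use the Kadec--Pe{\l}czy\'nski dichotomy in the source, then rule out the \(\ell^p\) alternative by applying Proposition~\ref{prop:ambient-ellr} in the target \(H^q\).
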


\begin{proof}
If \(T\) fixes a copy of \(\ell^2\), then it is not strictly singular.
Conversely, suppose that \(T\) is bounded below on an
infinite-dimensional closed subspace \(M\subset H^p\).

If \(p=2\), then \(M\) is an infinite-dimensional Hilbert space and
hence isomorphic to \(\ell^2\). Thus \(T\) fixes a copy of \(\ell^2\).

Suppose that \(p>2\). Under the boundary-value identification,
\(M\) is a closed subspace of \(L^p(\T)\). By
Lemma~\ref{lem:kp-subspace}, \(M\) contains a closed
subspace \(Y\) isomorphic either to \(\ell^2\) or to \(\ell^p\). If
\(Y\) is isomorphic to \(\ell^2\), the conclusion follows.

Suppose instead that \(Y\) is isomorphic to \(\ell^p\). Since \(T\)
is bounded below on \(M\), the space \(T(Y)\) is closed in \(H^q\), and
\(T|_Y\colon Y\to T(Y)\) is an isomorphism.
Thus \(H^q\) contains a closed subspace isomorphic
to \(\ell^p\). Proposition~\ref{prop:ambient-ellr}, applied with
\(r=p\) and \(s=q\), then gives \(p\in\{2,q\}\), contrary to
\(2<p<q\). Therefore \(Y\) is isomorphic to \(\ell^2\), and \(T\)
fixes a copy of \(\ell^2\).
\end{proof}

It remains to consider \(2\le p<q\).

\begin{theorem}
\label{thm:ss-above2}
Let \(2\le p<q<\infty\), and let \(\varphi:\D\to\D\) be analytic.
If \(C_\varphi:H^p\to H^q\) is bounded, then it is strictly singular.
\end{theorem}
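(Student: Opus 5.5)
Argue by contradiction using Proposition~\ref{prop:ss-criterion}. Suppose \(C_\varphi:H^p\to H^q\) is not strictly singular. Then it fixes a copy of \(\ell^2\): there is a closed subspace \(M\subset H^p\) with \(M\simeq\ell^2\) and a constant \(c>0\) such that
\[
        \|C_\varphi f\|_{H^q}\ge c\|f\|_{H^p},
        \qquad f\in M.
\]
The aim is to show that \(C_\varphi:H^p\to H^p\) would then be bounded below on \(M\). This contradicts its compactness, which Corollary~\ref{cor:boundary-obstruction} guarantees.

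The key intermediate bound is a norm comparison on the image \(N:=C_\varphi(M)\). It is a closed subspace of \(H^q\subset L^q(\T)\) and is isomorphic to \(\ell^2\). Since \(q>p\ge2\), Lemma~\ref{lem:kp-hilbert} applies with exponent \(q\) and gives a constant \(A_N\) with \(\|g\|_{H^q}\le A_N\|g\|_{H^2}\) for \(g\in N\). Because \(p\ge2\) and \(m\) is a probability measure, \(\|g\|_{H^2}\le\|g\|_{H^p}\). Combining these for \(f\in M\) gives
\[
        c\|f\|_{H^p}
        \le\|C_\varphi f\|_{H^q}
        \le A_N\|C_\varphi f\|_{H^2}
        \le A_N\|C_\varphi f\|_{H^p}.
\]
So \(C_\varphi:H^p\to H^p\) is bounded below on the infinite-dimensional space \(M\). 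This is impossible for a compact operator.

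No case split is needed for \(p=2\). There Proposition~\ref{prop:ss-criterion} still gives the fixed copy of \(\ell^2\), and the chain above is unchanged, with \(\|g\|_{H^2}=\|g\|_{H^p}\). The only point needing care is that Lemma~\ref{lem:kp-hilbert} requires \(2<q\), which holds because \(q>p\ge2\). I expect the main obstacle to be exactly this step: justifying that the Hilbertian subspace \(N\) sits in \(L^q\) with equivalent \(L^2\) and \(L^q\) norms. That is supplied by the complex Kadec--Pe{\l}czy\'nski lemma already recorded, so the rest of the argument is routine.
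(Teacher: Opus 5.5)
Your proof is correct and follows the paper's argument: Proposition~\ref{prop:ss-criterion} gives a fixed copy \(M\simeq\ell^2\), and Lemma~\ref{lem:kp-hilbert} applied to \(N=C_\varphi(M)\), together with \(\|g\|_{H^2}\le\|g\|_{H^p}\), gives \(\|g\|_{H^q}\le A\|g\|_{H^p}\) on \(N\), which contradicts the compactness of \(C_\varphi:H^p\to H^p\) from Corollary~\ref{cor:boundary-obstruction}. The only difference is cosmetic: in the last step the paper uses a Cauchy-subsequence argument to show that \(C_\varphi|_M:M\to H^q\) is compact, whereas you observe directly that \(C_\varphi:H^p\to H^p\) is bounded below on \(M\), and the two are equivalent.
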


\begin{proof}
Suppose, to the contrary, that \(C_\varphi:H^p\to H^q\) is not
strictly singular. By Proposition~\ref{prop:ss-criterion}, there is a
closed subspace \(M\subset H^p\), isomorphic to \(\ell^2\), such that
\(C_\varphi\) is bounded below on \(M\). Put \(N=C_\varphi(M)\).
Then \(N\) is closed in \(H^q\), and
\(C_\varphi|_M:M\to N\) is an isomorphism. In particular,
\(N\simeq\ell^2\).

Under the boundary-value identification, \(N\) is a closed subspace of
\(L^q(\T)\). Lemma~\ref{lem:kp-hilbert} therefore
gives a constant \(A>0\) such that
\[
        \|g\|_{H^q}
        =
        \|g\|_{L^q(\T)}
        \le
        A\|g\|_{L^2(\T)}
        =
        A\|g\|_{H^2},
        \qquad
        g\in N.
\]
Since \(q>p\), we have \(N\subset H^q\subset H^p\). Moreover,
\(p\ge2\) and \(m(\T)=1\), so
\(\|g\|_{H^2}\le\|g\|_{H^p}\) for \(g\in H^p\). Consequently,
\[
        \|g\|_{H^q}
        \le
        A\|g\|_{H^p},
        \qquad
        g\in N.
\]

By Corollary~\ref{cor:boundary-obstruction},
\(C_\varphi:H^p\to H^p\) is compact. Let \((f_n)\) be a bounded
sequence in \(M\). After passing to a subsequence, we may assume that
\((C_\varphi f_n)\) is Cauchy in \(H^p\). Since
\(C_\varphi f_n-C_\varphi f_m\in N\), the preceding estimate gives
\[
        \|C_\varphi f_n-C_\varphi f_m\|_{H^q}
        \le
        A\|C_\varphi f_n-C_\varphi f_m\|_{H^p}
        \longrightarrow0
        \qquad (n,m\to\infty).
\]
Thus \((C_\varphi f_n)\) converges in \(H^q\), and hence
\(C_\varphi|_M:M\to H^q\) is compact. This is impossible because
\(C_\varphi|_M\) is bounded below and \(M\) is infinite-dimensional.
Therefore \(C_\varphi:H^p\to H^q\) is strictly singular.
\end{proof}

Combining the two source ranges gives the fixed-copy classification when \(p<q\).

\begin{corollary}
\label{cor:p-lt-q}
Let \(1\le p<q<\infty\), and let \(\varphi:\D\to\D\) be analytic.
If \(C_\varphi:H^p\to H^q\) is bounded, then \(C_\varphi\) is strictly
singular and
\[
        \operatorname{Fix}_{\ell}(C_\varphi)
        =
        \operatorname{Fix}_{L}(C_\varphi)
        =
        \varnothing.
\]
Consequently,
\(\CS(H^p,H^q)=\CB(H^p,H^q)\).
\end{corollary}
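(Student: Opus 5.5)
The corollary follows by assembling the two strict-singularity theorems of this section with the trivial inclusions between the fixed-copy exponent sets. We split according to the source exponent. If \(1\le p<2\), Theorem~\ref{thm:ss-below2} shows that every bounded \(C_\varphi:H^p\to H^q\) is strictly singular. If \(2\le p<q\), Theorem~\ref{thm:ss-above2} gives the same conclusion. Since \(1\le p<q<\infty\), these two cases cover every admissible pair. Hence every bounded \(C_\varphi:H^p\to H^q\) is strictly singular.

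Next we deduce that both exponent sets are empty. Suppose \(C_\varphi\) fixed a copy of \(\ell^r\) for some \(r\ge1\). Then it would be bounded below on a closed subspace isomorphic to \(\ell^r\), which is infinite-dimensional, contradicting strict singularity. Thus \(\operatorname{Fix}_{\ell}(C_\varphi)=\varnothing\). The inclusion \(\operatorname{Fix}_{L}(T)\subseteq\operatorname{Fix}_{\ell}(T)\), valid for every bounded \(T\) because \(L^r(0,1)\) contains a complemented copy of \(\ell^r\), then gives \(\operatorname{Fix}_{L}(C_\varphi)=\varnothing\). Alternatively, one can argue directly: \(L^r(0,1)\) is infinite-dimensional, so a fixed copy of it would again contradict strict singularity.

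Finally, the class identity \(\CS(H^p,H^q)=\CB(H^p,H^q)\) follows from the definitions. The inclusion \(\subseteq\) holds because \(\CS\) is defined as \(\CB\cap\mathcal S\). The reverse inclusion is exactly the statement that every bounded composition operator \(H^p\to H^q\) is strictly singular, proved above.

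There is no real obstacle at this stage; all the analytic work was done in Theorems~\ref{thm:ss-below2} and~\ref{thm:ss-above2}. The only point to check is that the two cases \(p<2\) and \(p\ge2\) together exhaust the range \(1\le p<q\), which they do.
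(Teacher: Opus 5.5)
Your proposal is correct and follows the paper's own proof essentially verbatim. Strict singularity comes from Theorems~\ref{thm:ss-below2} and~\ref{thm:ss-above2}, which together cover $1\le p<2$ and $2\le p<q$. Both exponent sets are empty because a fixed copy of an infinite-dimensional space contradicts strict singularity, and the class identity is then immediate. Your detour through the inclusion $\operatorname{Fix}_{L}\subseteq\operatorname{Fix}_{\ell}$ is a harmless alternative to the direct argument, which you also give.
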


\begin{proof}
The strict singularity follows from
Theorems~\ref{thm:ss-below2} and
\ref{thm:ss-above2}.  If an operator fixes a copy of an
infinite-dimensional Banach space, then it is bounded below on an
infinite-dimensional subspace.  Hence a strictly singular operator fixes
no copy of \(\ell^r\) or \(L^r(0,1)\), and the two exponent sets are
empty.  Since every bounded composition operator \(H^p\to H^q\) with
\(p<q\) is strictly singular,  the asserted class identity follows.
\end{proof}

\begin{proof}[Proof of Theorem~A]
Assume first that \(q<p\). By Littlewood's subordination principle,
\(C_\varphi:H^p\to H^p\) is bounded. Since \(m(\T)=1\), the inclusion
\(H^p\hookrightarrow H^q\) is bounded, and hence
\(C_\varphi:H^p\to H^q\) is bounded. The first three rows of the table
follow from
Theorem~\ref{thm:q-lt-p-class}.
Moreover, Theorem~\ref{thm:q-lt-p-identity} gives
\[
C_\varphi:H^p\to H^q\text{ is compact}
\Longleftrightarrow
m(E_\varphi)=0
\Longleftrightarrow
C_\varphi\text{ is strictly singular}.
\]

Assume now that \(p<q\). By
Corollary~\ref{cor:boundary-obstruction}, boundedness implies
\(m(E_\varphi)=0\). Corollary~\ref{cor:p-lt-q}
shows that \(C_\varphi:H^p\to H^q\) is strictly singular and that
\(\operatorname{Fix}_{\ell}(C_\varphi)
=\operatorname{Fix}_{L}(C_\varphi)=\varnothing\).
The proof of Theorem~A is complete. 
\end{proof}
\smallskip

\section{The diagonal classification}
\label{sec:diag}

We now determine the fixed-copy exponent sets for composition operators
on \(H^p\). Laitila, Nieminen, Saksman, and Tylli~\cite{LNST2017}
determined when \(C_\varphi\) fixes copies of \(\ell^p\) and
\(\ell^2\), and, for \(1<p<\infty\), \(p\ne2\), when it fixes a copy
of \(L^p(0,1)\). When \(m(E_\varphi)>0\) and \(1\le p<2\), the
boundary lower estimate, combined with Theorem~B, yields fixed copies
of \(L^r(0,1)\), and hence of \(\ell^r\), for \(p<r<2\).
Proposition~\ref{prop:ambient-ellr} restricts the possible sequence
exponents, and hence also the possible function space exponents. The
remaining cases follow from the multiplier factorizations in
Section~\ref{sec:prelim}, the \(\ell^2\)-subspaces of
\(L^r(0,1)\), and the separate endpoint argument for \(p=1\) given
below.
%------------------------------------------------

\subsection{Zero boundary contact}
%------------------------------------------------

We first record the compactness consequence of
\(m(E_\varphi)=0\) that will be used with the multiplier
factorizations.

\begin{lemma}
\label{lem:diag-compact}
Let \(1\le p<t<\infty\), and let \(\varphi:\D\to\D\) be analytic
with \(m(E_\varphi)=0\). Set \(a=\frac{pt}{t-p}\), so that
\(\frac1 p=\frac1 a+\frac1 t\). If \(g\in H^a\), then
\(C_\varphi M_g:H^t\to H^p\) is compact.
\end{lemma}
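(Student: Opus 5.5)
Since \(g\in H^a\) with \(a<\infty\), I plan to approximate \(g\) by analytic polynomials and reduce to the case of a bounded multiplier, following the pattern of Lemma~\ref{lem:mult-compact}. H\"older's inequality with \(1/p=1/a+1/t\) gives \(\|gh\|_{H^p}\le\|g\|_{H^a}\|h\|_{H^t}\), so \(M_g:H^t\to H^p\) is bounded with norm at most \(\|g\|_{H^a}\). Choose polynomials \(P_n\to g\) in \(H^a\). By Littlewood's subordination principle \(C_\varphi:H^p\to H^p\) is bounded, hence
\[
\|C_\varphi M_g-C_\varphi M_{P_n}\|_{H^t\to H^p}\le\|C_\varphi\|_{H^p\to H^p}\|g-P_n\|_{H^a}\to0 .
\]
Because the compact operators form a closed subspace, it suffices to prove that \(C_\varphi M_{P_n}:H^t\to H^p\) is compact for each \(n\). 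Since \(M_{P_n}:H^t\to H^t\) is bounded, this in turn reduces to the compactness of \(C_\varphi:H^t\to H^p\).

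For \(p<t\) the latter is exactly the \(q<p\) case of Proposition~\ref{prop:classical}\textup{(i)} (Goebeler's theorem), with the roles of the exponents renamed: source exponent \(t\), target exponent \(p<t\). The hypothesis \(m(E_\varphi)=0\) then yields compactness of \(C_\varphi:H^t\to H^p\). Composing with the bounded operator \(M_{P_n}\) on \(H^t\) gives compactness of \(C_\varphi M_{P_n}\), and the norm limit gives compactness of \(C_\varphi M_g\).

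There is essentially no obstacle here. The only points needing care are the following.
\begin{itemize}
\item Polynomial density in \(H^a\) requires \(a<\infty\), which holds since \(t<\infty\).
\item The norm estimate for \(M_g-M_{P_n}=M_{g-P_n}\) uses the same H\"older inequality as above.
\item The compactness criterion must be applied in the direction \(H^t\to H^p\) with \(p<t\), which matches Proposition~\ref{prop:classical}\textup{(i)}.
\end{itemize}

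If one wished to avoid Goebeler's theorem, one could argue instead via pullback measures. By Lemma~\ref{lem:boundary-part} and \eqref{eq:boundary-mass}, \(m(E_\varphi)=0\) gives \((\mu_\varphi)_\T=0\). A bounded sequence in \(H^t\) converging weakly to zero converges locally uniformly on \(\D\). The uniform integrability of \(|f_n|^p\) coming from the \(L^t\) bound, together with the fact that \(\mu_\varphi\) charges only \(\D\), would then give \(\int|f_n|^p\,d\mu_\varphi\to0\). Citing Proposition~\ref{prop:classical}\textup{(i)} is the more direct route, and it is the one I will use.
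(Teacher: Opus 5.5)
Your proposal is correct and follows essentially the same route as the paper. It obtains compactness of \(C_\varphi:H^t\to H^p\) from \(m(E_\varphi)=0\) (the paper cites Theorem~\ref{thm:q-lt-p-identity}, which rests on the same Goebeler criterion in Proposition~\ref{prop:classical}\textup{(i)}), composes with \(M_{P_n}\), and passes to the norm limit \(P_n\to g\) in \(H^a\) using H\"older's inequality and the boundedness of \(C_\varphi\) on \(H^p\).
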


\begin{proof}
By Theorem~\ref{thm:q-lt-p-identity},
\(C_\varphi:H^t\to H^p\) is compact. Choose analytic polynomials
\(P_n\) such that \(P_n\to g\) in \(H^a\). Since
\(M_{P_n}:H^t\to H^t\) is bounded, each operator
\(C_\varphi M_{P_n}:H^t\to H^p\) is compact. Moreover, H\"older's
inequality gives
\[
        \|C_\varphi M_g-C_\varphi M_{P_n}\|_{H^t\to H^p}
        \le
        \|C_\varphi\|_{H^p\to H^p}
        \|g-P_n\|_{H^a}
       \longrightarrow 0 \qquad (n\to\infty).
\]
Thus \(C_\varphi M_g\) is compact.
\end{proof}
Combining the preceding lemma with the two multiplier factorizations
gives the compactness statements needed below.

\begin{proposition}
\label{prop:diag-compact}
Let \(\varphi:\D\to\D\) be analytic, and assume that
\(m(E_\varphi)=0\).
\begin{enumerate}[label=\textup{(\roman*)}]
\item If \(X\) is a reflexive closed subspace of \(H^1\), then
\(C_\varphi|_X:X\to H^1\) is compact.
\item If \(1<p<2\) and \(X\) is a closed subspace of \(H^p\)
containing no copy of \(\ell^p\), then
\(C_\varphi|_X:X\to H^p\) is compact.
\end{enumerate}
\end{proposition}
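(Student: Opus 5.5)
The plan is to factor the restriction \(C_\varphi|_X\) through a compact operator, exactly as in the proof of Theorem~\ref{thm:ss-below2}, but now using Lemma~\ref{lem:diag-compact} in place of Lemma~\ref{lem:mult-compact}.

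For part~(i), let \(X\subset H^1\) be a reflexive closed subspace. Lemma~\ref{lem:kp-H1} provides \(t>1\), an outer function \(g\in H^a\) with \(a=t/(t-1)\), and bounded operators \(U:X\to H^t\), \(Uf=f/g\), and \(V=M_g:H^t\to H^1\), such that \(VU=j_X\). Thus \(f=g\,Uf\) for every \(f\in X\), and therefore
\[
        C_\varphi|_X=C_\varphi j_X=(C_\varphi M_g)\,U .
\]
With \(p=1\) we have \(1/1=1/a+1/t\), so Lemma~\ref{lem:diag-compact}, which uses the hypothesis \(m(E_\varphi)=0\), shows that \(C_\varphi M_g:H^t\to H^1\) is compact. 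Composing a compact operator with the bounded operator \(U\) gives compactness of \(C_\varphi|_X:X\to H^1\).

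For part~(ii), let \(1<p<2\), and let \(X\subset H^p\) be a closed subspace containing no copy of \(\ell^p\). Lemma~\ref{lem:kp-below2} gives \(t\in(p,2)\), an outer \(g\in H^a\) with \(a=pt/(t-p)\), and bounded \(U:X\to H^t\), \(V=M_g:H^t\to H^p\), with \(VU=j_X\). Exactly as in part~(i), \(C_\varphi|_X=(C_\varphi M_g)U\). Lemma~\ref{lem:diag-compact} applies with this \(p<t\), and gives compactness of \(C_\varphi M_g:H^t\to H^p\), hence of \(C_\varphi|_X\).

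There is no real obstacle; the only point to check is that the exponents match. In both factorization lemmas, \(a\) is defined precisely by \(1/p=1/a+1/t\), which is the relation required in Lemma~\ref{lem:diag-compact}. The compactness of \(C_\varphi:H^t\to H^p\) underlying that lemma comes from Theorem~\ref{thm:q-lt-p-identity} (the case \(p<t\)) together with \(m(E_\varphi)=0\).
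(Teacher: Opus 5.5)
Your proposal is correct and follows the paper's proof essentially verbatim: you factor \(C_\varphi|_X=(C_\varphi M_g)U\) via Lemma~\ref{lem:kp-H1} in case (i) and Lemma~\ref{lem:kp-below2} in case (ii), then invoke Lemma~\ref{lem:diag-compact}. Your check that the exponent relation \(1/p=1/a+1/t\) from the factorization lemmas matches the hypothesis of Lemma~\ref{lem:diag-compact} is exactly the point that needs verifying.
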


\begin{proof}
For \textup{(i)},
Lemma~\ref{lem:kp-H1} gives \(t>1\),
\(g\in H^a\), where \(\frac1 a=1-\frac1 t\), and a bounded operator
\(U:X\to H^t\) such that \(M_gU=j_X\), where
\(j_X:X\to H^1\) denotes the natural inclusion. By
Lemma~\ref{lem:diag-compact},
\(C_\varphi M_g:H^t\to H^1\) is compact. Since \(U\) is bounded and
\(C_\varphi|_X=C_\varphi j_X=(C_\varphi M_g)U\),
it follows that \(C_\varphi|_X:X\to H^1\) is compact.

For \textup{(ii)},
Lemma~\ref{lem:kp-below2} gives \(t\in(p,2)\),
\(g\in H^a\), where \(\frac1 a=\frac1 p-\frac1 t\), and a bounded operator
\(U:X\to H^t\) such that \(M_gU=j_X\), where
\(j_X:X\to H^p\) denotes the natural inclusion. By
Lemma~\ref{lem:diag-compact},
\(C_\varphi M_g:H^t\to H^p\) is compact. Since \(U\) is bounded and
\(C_\varphi|_X=C_\varphi j_X=(C_\varphi M_g)U\),
we conclude that \(C_\varphi|_X:X\to H^p\) is compact.
\end{proof}
%------------------------------------------------
\subsection{Proof of Theorem~C}
%------------------------------------------------

We combine
Proposition~\ref{prop:diag-compact}
with the known fixed-copy results for \(\ell^p\), \(\ell^2\), and
\(L^p(0,1)\).

\begin{proof}
We use the following results of
Laitila--Nieminen--Saksman--Tylli.
\begin{enumerate}[label=\textup{(LNST\arabic*)},leftmargin=*]
\item For every \(1\le p<\infty\), the operator
\(C_\varphi:H^p\to H^p\) is noncompact if and only if it fixes a
copy of \(\ell^p\)
\cite[Theorem~1.2]{LNST2017}.

\item For \(1\le p<\infty\), \(p\ne2\), the operator
\(C_\varphi:H^p\to H^p\) fixes a copy of \(\ell^2\) if and only if
\(m(E_\varphi)>0\)
\cite[Theorem~1.4]{LNST2017}.

\item For \(1<p<\infty\), \(p\ne2\), the operator
\(C_\varphi:H^p\to H^p\) fixes a copy of \(L^p(0,1)\) if and only if
\(m(E_\varphi)>0\)
\cite[Theorem~1.5]{LNST2017}.
\end{enumerate}

Recall that
\(\operatorname{Fix}_{L}(C_\varphi)
\subseteq\operatorname{Fix}_{\ell}(C_\varphi)\).
If \(C_\varphi\) is compact, then it fixes no copy of an
infinite-dimensional Banach space, and hence
\[
        \operatorname{Fix}_{\ell}(C_\varphi)
        =
        \operatorname{Fix}_{L}(C_\varphi)
        =
        \varnothing.
\]
For the remainder of the proof, assume that \(C_\varphi\) is
noncompact.

Suppose first that \(p=2\). Since
\(L^2(0,1)\simeq\ell^2\), \textup{(LNST1)} gives
\(2\in\operatorname{Fix}_{L}(C_\varphi)
\subseteq\operatorname{Fix}_{\ell}(C_\varphi)\).
On the other hand,
Proposition~\ref{prop:ambient-ellr} gives
\(\operatorname{Fix}_{\ell}(C_\varphi)\subseteq\{2\}\).
Therefore
\[
        \operatorname{Fix}_{\ell}(C_\varphi)
        =
        \operatorname{Fix}_{L}(C_\varphi)
        =
        \{2\}.
\]

Assume next that \(p\ne2\) and \(m(E_\varphi)=0\).
By \textup{(LNST1)},
\(p\in\operatorname{Fix}_{\ell}(C_\varphi)\).

Let \(p=1\). Suppose that
\(r\in\operatorname{Fix}_{\ell}(C_\varphi)\) for some \(r>1\),
and choose \(M\simeq\ell^r\) such that \(C_\varphi|_M\) is bounded
below. Since \(\ell^r\) is reflexive for \(1<r<\infty\), so is \(M\).
Proposition~\ref{prop:diag-compact}%
\textup{(i)} then implies that \(C_\varphi|_M\) is compact, a
contradiction. Thus
\[
        \operatorname{Fix}_{\ell}(C_\varphi)=\{1\}.
\]

Let \(1<p<2\). By \textup{(LNST1)} and
Proposition~\ref{prop:ambient-ellr},
\[
        p
        \in
        \operatorname{Fix}_{\ell}(C_\varphi)
        \subseteq
        [p,2].
\]
Suppose that
\(r\in\operatorname{Fix}_{\ell}(C_\varphi)\) for some \(p<r\le2\),
and choose \(M\simeq\ell^r\) such that \(C_\varphi|_M\) is bounded
below. Since \(p<r\),
Lemma~\ref{lem:ellr-rigidity} shows that \(M\) contains no
copy of \(\ell^p\).
Proposition~\ref{prop:diag-compact}%
\textup{(ii)} then implies that \(C_\varphi|_M\) is compact, again a
contradiction. Therefore
\[
        \operatorname{Fix}_{\ell}(C_\varphi)=\{p\}.
\]

Let \(p>2\). By \textup{(LNST1)} and
Proposition~\ref{prop:ambient-ellr},
\(p\in\operatorname{Fix}_{\ell}(C_\varphi)\subseteq\{2,p\}\).
Since \(m(E_\varphi)=0\), \textup{(LNST2)} excludes \(2\). Hence
\[
        \operatorname{Fix}_{\ell}(C_\varphi)=\{p\}.
\]

It remains to determine
\(\operatorname{Fix}_{L}(C_\varphi)\) when \(p\ne2\) and
\(m(E_\varphi)=0\). Suppose that
\(r\in\operatorname{Fix}_{L}(C_\varphi)\) for some
\(1\le r<\infty\). Choose \(M\simeq L^r(0,1)\) such that
\(C_\varphi|_M\) is bounded below, and let
\(J:L^r(0,1)\to M\) be an isomorphism.

Let \((r_n)\) be the Rademacher functions on \((0,1)\), and put
\[
        R
        =
        \overline{\operatorname{span}}^{\,L^r(0,1)}
        \{r_n:n\in\mathbb N\}.
\]
By Khintchine's inequalities, \(R\simeq\ell^2\). Hence \(J(R)\)
is a closed subspace of \(M\) isomorphic to \(\ell^2\), and
\(C_\varphi|_{J(R)}\) is bounded below. Thus
\(2\in\operatorname{Fix}_{\ell}(C_\varphi)\), contradicting
\textup{(LNST2)}. Therefore
\[
        \operatorname{Fix}_{L}(C_\varphi)=\varnothing.
\]

We now consider \(p\ne2\) and \(m(E_\varphi)>0\).

Suppose first that \(p>2\). Since
\(\ell^2\simeq L^2(0,1)\), \textup{(LNST2)} gives
\(2\in\operatorname{Fix}_{L}(C_\varphi)\), while
\textup{(LNST3)} gives
\(p\in\operatorname{Fix}_{L}(C_\varphi)\). Together with
Proposition~\ref{prop:ambient-ellr}, this yields
\[
        \{2,p\}
        \subseteq
        \operatorname{Fix}_{L}(C_\varphi)
        \subseteq
        \operatorname{Fix}_{\ell}(C_\varphi)
        \subseteq
        \{2,p\}.
\]
Consequently,
\[
        \operatorname{Fix}_{\ell}(C_\varphi)
        =
        \operatorname{Fix}_{L}(C_\varphi)
        =
        \{2,p\}.
\]

It remains to treat \(1\le p<2\). By
Lemma~\ref{lem:boundary-part}, there exist a measurable set
\(F\subset\T\) with \(m(F)>0\), a function
\(h\in L^\infty(\T)\), and a constant \(\eta>0\) such that
\(d(\mu_\varphi)_\T=h\,dm\) and \(h\ge\eta\) almost everywhere on
\(F\). The pullback identity gives
\begin{equation}
\label{eq:diag-lower}
        \|C_\varphi f\|_{H^p}^p
        \ge
        \eta\|f\|_{L^p(F)}^p,
        \qquad
        f\in H^p.
\end{equation}

Let \(p=1\). For each \(1<r\le2\), Theorem~B, applied with
\(E=F\) and \(s=1\), together with
\eqref{eq:diag-lower}, gives
\(r\in\operatorname{Fix}_{L}(C_\varphi)\). Thus
\[
        (1,2]
        \subseteq
        \operatorname{Fix}_{L}(C_\varphi)
        \subseteq
        \operatorname{Fix}_{\ell}(C_\varphi).
\]
By \textup{(LNST1)},
\(1\in\operatorname{Fix}_{\ell}(C_\varphi)\), while
Proposition~\ref{prop:ambient-ellr} gives
\(\operatorname{Fix}_{\ell}(C_\varphi)\subseteq[1,2]\).
Consequently,
\[
        \operatorname{Fix}_{\ell}(C_\varphi)=[1,2].
\]

It remains to exclude the endpoint \(1\) from
\(\operatorname{Fix}_{L}(C_\varphi)\).
If \(1\in\operatorname{Fix}_{L}(C_\varphi)\), then \(H^1\) would
contain a closed subspace isomorphic to \(L^1(0,1)\), which is
impossible; see \cite[p.~262]{KwapienPelczynski1976}. Hence
\(1\notin\operatorname{Fix}_{L}(C_\varphi)\), and therefore
\[
        \operatorname{Fix}_{L}(C_\varphi)=(1,2].
\]

Finally, let \(1<p<2\). By \textup{(LNST3)},
\(p\in\operatorname{Fix}_{L}(C_\varphi)\). For every \(p<r\le2\),
Theorem~B, applied with \(E=F\) and \(s=p\), together with
\eqref{eq:diag-lower}, gives
\(r\in\operatorname{Fix}_{L}(C_\varphi)\). Hence
\[
        [p,2]
        \subseteq
        \operatorname{Fix}_{L}(C_\varphi)
        \subseteq
        \operatorname{Fix}_{\ell}(C_\varphi)
        \subseteq
        [p,2].
\]
Therefore
\[
        \operatorname{Fix}_{\ell}(C_\varphi)
        =
        \operatorname{Fix}_{L}(C_\varphi)
        =
        [p,2].
\]

Finally,
\[
        \CK(H^p,H^p)
        =
        \CS(H^p,H^p)
\]
follows from \cite[Corollary~1.3]{LNST2017}.
This completes the proof of Theorem~C.
\end{proof}

\section{Relative singularity classes and critical examples}
\label{sec:lens}

The class identities in Corollary~D follow from Theorems~A and~C.
It remains to prove the strict inclusions. For \(p<q\), a critical
lens map gives a bounded noncompact composition operator and hence
\(\CK(H^p,H^q)\subsetneq\CS(H^p,H^q)\).

For \(0<\alpha<1\), let
\(T(z)=(1-z)/(1+z)\), \(z\in\D\), and define the symmetric lens map
\begin{equation}
\label{eq:lens-map}
        \varphi_\alpha(z)
        =
        T^{-1}\bigl(T(z)^\alpha\bigr),
\end{equation}
where the principal branch of \(w^\alpha\) is taken on the right
half-plane. Equivalently,
\[
        \varphi_\alpha(z)
        =
        \frac{(1+z)^\alpha-(1-z)^\alpha}
        {(1+z)^\alpha+(1-z)^\alpha}.
\]

The critical lens map provides the strict inclusion when \(p<q\).

\begin{proposition}
\label{prop:lens-example}
Let \(1\le p<q<\infty\), and set \(\alpha=p/q\). Then
\(C_{\varphi_\alpha}:H^p\to H^q\) is bounded but not compact.
Consequently,
\[
        C_{\varphi_\alpha}
        \in
        \CS(H^p,H^q)\setminus\CK(H^p,H^q).
\]
\end{proposition}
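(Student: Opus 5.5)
The plan is to verify the Carleson-measure criterion of Proposition~\ref{prop:carleson}\textup{(i)} for the pullback measure \(\mu=\mu_{\varphi_\alpha}\), via Lemma~\ref{lem:pullback-id}. Membership in \(\CS(H^p,H^q)\) then comes for free from Corollary~\ref{cor:p-lt-q}, since every bounded \(C_\varphi:H^p\to H^q\) with \(p<q\) is strictly singular.

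First, the geometry. The map \(\varphi_\alpha\) extends continuously to \(\overline\D\) and sends \(\T\setminus\{\pm1\}\) into \(\D\); indeed \(T\) maps \(\T\) onto the imaginary axis, and \(w\mapsto w^\alpha\) sends \(i\R\setminus\{0\}\) into the open right half-plane. Hence \(|\varphi_\alpha^*|<1\) a.e., so \(\mu_\T=0\) and \(m(E_{\varphi_\alpha})=0\).

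Next, the estimate on Carleson boxes. Near \(\zeta=1\) we have \(T(\zeta)\approx(1-\zeta)/2\). Writing \(\zeta=e^{i\theta}\), this gives \(T(e^{i\theta})=-i\tan(\theta/2)\), so
\[
1-|\varphi_\alpha(e^{i\theta})|\asymp\operatorname{Re}\bigl(T(e^{i\theta})^\alpha\bigr)\asymp|\theta|^\alpha
\]
and also \(|1-\varphi_\alpha(e^{i\theta})|\asymp|\theta|^\alpha\). Thus \(\varphi_\alpha(e^{i\theta})\) lies in a box \(S(I)\) anchored at \(1\) with \(|I|=h\) only if \(|\theta|\lesssim h^{1/\alpha}\). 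This yields \(\mu(S(I))\lesssim h^{1/\alpha}=h^{q/p}\). The same holds at \(-1\) by symmetry. For boxes away from \(\pm1\), \(\mu(S(I))=0\) once \(h\) is small, because the image of the part of the circle away from \(\pm1\) stays in a compact subset of \(\D\). Boxes of intermediate position are handled by enlarging them to boxes at \(\pm1\) with comparable side. Together these give \(\sup_I\mu(S(I))/|I|^{q/p}<\infty\), so \(C_{\varphi_\alpha}:H^p\to H^q\) is bounded.

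For noncompactness I would show the matching lower bound. Take \(I_h\) centred at \(1\) with \(|I_h|=h\). Every \(\theta\) with \(|\theta|\le ch^{1/\alpha}\), for a small constant \(c\), has \(\varphi_\alpha(e^{i\theta})\in S(I_h)\), since \(|1-\varphi_\alpha|\lesssim|\theta|^\alpha\le C c^\alpha h\). Hence \(\mu(S(I_h))\gtrsim h^{q/p}\), the vanishing condition in Proposition~\ref{prop:carleson}\textup{(i)} fails, and Lemma~\ref{lem:pullback-id} shows \(C_{\varphi_\alpha}\) is not compact. Alternatively, one can test on normalized kernels \(f_a(z)=(1-a^2)^{1/p}(1-az)^{-2/p}\) with \(a\to1^-\) and check \(\|C_{\varphi_\alpha}f_a\|_{H^q}\not\to0\) while \(f_a\to0\) weakly.

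The main obstacle is making the two-sided asymptotics \(1-|\varphi_\alpha(e^{i\theta})|\asymp|1-\varphi_\alpha(e^{i\theta})|\asymp|\theta|^\alpha\) precise uniformly near \(\theta=0\). I would do this through the Cayley transform, using
\[
1-|T^{-1}(w)|^2=\frac{4\operatorname{Re}w}{|1+w|^2}
\qquad\text{and}\qquad
1-T^{-1}(w)=\frac{2w}{1+w},
\]
which reduce everything to the behaviour of \(w=(-i\tan(\theta/2))^\alpha\), a point with argument \(\mp\alpha\pi/2\) and modulus \(\asymp|\theta|^\alpha\).
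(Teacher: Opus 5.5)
Your proof is correct, and it has the same architecture as the paper's proof. Both show \((\mu_{\varphi_\alpha})_\T=0\) via \(1-|T^{-1}(w)|^2=4\operatorname{Re}w/|1+w|^2\). Both obtain \(\mu_{\varphi_\alpha}(S(I))\asymp|I|^{1/\alpha}=|I|^{q/p}\) for boxes at \(\pm1\). Both then read off boundedness and the failure of the vanishing condition from Proposition~\ref{prop:carleson}\textup{(i)} and Lemma~\ref{lem:pullback-id}, with strict singularity coming from Corollary~\ref{cor:p-lt-q}.

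The difference is where the \(h^{1/\alpha}\) estimate comes from. The paper imports \(\rho_{\varphi_\alpha}(h)\asymp h^{1/\alpha}\) for pseudo-Carleson windows from \cite[Lemma~3.3]{LefevreLiQueffelecRodriguezPiazza2013Lens}, and then needs the window/box comparison \eqref{eq:box-window}. You derive the estimate directly from \(1-|\varphi_\alpha(e^{i\theta})|\asymp|1-\varphi_\alpha(e^{i\theta})|\asymp|\theta|^\alpha\) near \(\theta=0\). Your route is self-contained and elementary; the paper's is shorter.

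Your upper bound can be simplified further. Since \(\operatorname{Re}w=\cos(\alpha\pi/2)|w|\) and \(|1+w|\le 2\max\{1,|w|\}\), the same Cayley identity gives, for all \(\theta\),
\[
1-|\varphi_\alpha(e^{i\theta})|^2\ge\cos(\alpha\pi/2)\min\bigl\{|\tan(\theta/2)|^\alpha,|\cot(\theta/2)|^\alpha\bigr\}.
\]
Hence \(\mu_{\varphi_\alpha}(\{1-|z|\le h\})\lesssim h^{1/\alpha}\). Because \(S(I)\subset\{1-|z|\le|I|\}\), this bounds \(\mu_{\varphi_\alpha}(S(I))\) for every arc at once. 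The separate treatment of boxes ``away from \(\pm1\)'' and of ``intermediate'' boxes is therefore unnecessary.

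One small correction concerns your alternative test-function argument. For \(p=1\), the functions \(f_a(z)=(1-a^2)(1-az)^{-2}\) are not weakly null in \(H^1\). Pairing with \(\log\frac{1}{1-z}\in\mathrm{BMOA}\) gives \((1-a^2)\sum_{n\ge1}\frac{n+1}{n}a^n\to2\). So you cannot argue via ``compact operators send weakly null sequences to norm-null ones'' there. Instead, use the standard criterion: \(C_\varphi\) is compact if and only if it maps bounded sequences tending to \(0\) uniformly on compact subsets of \(\D\) to norm-null sequences. Since this is only an aside, your main Carleson-measure argument is unaffected.
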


\begin{proof}
Put \(\mu_\alpha=\mu_{\varphi_\alpha}\). We first determine its
boundary part. Let \(\zeta=e^{it}\in\T\setminus\{1,-1\}\), where
\(t\in(-\pi,\pi)\setminus\{0\}\). Then
\[
        T(\zeta)
        =
        -i\tan\frac{t}{2}
        \in i\mathbb R\setminus\{0\}.
\]
The branch in \eqref{eq:lens-map} extends continuously to
\(i\mathbb R\setminus\{0\}\). Hence, with \(w=T(\zeta)^\alpha\),
\[
        \varphi_\alpha^*(\zeta)=T^{-1}(w),
        \qquad
        \arg w\in
        \left\{-\frac{\alpha\pi}{2},\frac{\alpha\pi}{2}\right\}.
\]
Since \(0<\alpha<1\), we have \(\operatorname{Re}w>0\), and therefore
\[
        1-\left|\varphi_\alpha^*(\zeta)\right|^2
        =
        1-\left|\frac{1-w}{1+w}\right|^2
        =
        \frac{4\operatorname{Re}w}{|1+w|^2}
        >0.
\]
At the remaining two points,
\(\lim_{r\to1^-}\varphi_\alpha(r)=1\) and
\(\lim_{r\to1^-}\varphi_\alpha(-r)=-1\). Thus the radial boundary
function determined by these limits satisfies
\begin{equation}
\label{eq:lens-contact}
        E_{\varphi_\alpha}=\{1,-1\},
        \qquad
        (\mu_\alpha)_\T=0,
\end{equation}
where the second assertion follows from
\eqref{eq:boundary-mass}.

For \(\xi\in\T\) and \(0<h<1\), let
\[
        S(\xi,h)=\{z\in\D:|z-\xi|\le h\},
        \quad\text{and}\quad
        \rho_{\varphi_\alpha}(h)
        =
        \sup_{\xi\in\T}(\mu_\alpha)_\D(S(\xi,h)).
\]
These are the pseudo-Carleson windows and the maximal Carleson
function used in
\cite[(3.1), Lemma~3.3]{LefevreLiQueffelecRodriguezPiazza2013Lens}.
Since \((\mu_\alpha)_\T=0\) by
\eqref{eq:lens-contact}, the cited estimate gives
\[
        \rho_{\varphi_\alpha}(h)
        \asymp_\alpha
        h^{1/\alpha},
        \qquad
        0<h<1.
\]

There is an absolute constant \(c>1\) such that
\begin{equation}
\label{eq:box-window}
        S(I)\subset S(\xi_I,c|I|),
        \qquad
        S(\xi,h)\subset S(I_{\xi,ch}),
\end{equation}
whenever \(I\in\mathcal I\) and \(0<h<c^{-1}\), where \(\xi_I\) is
the midpoint of \(I\) and \(I_{\xi,ch}\) is the arc centered at
\(\xi\) with normalized length \(ch\). Hence, for \(|I|<c^{-1}\),
\[
        (\mu_\alpha)_\D(S(I))
        \le
        \rho_{\varphi_\alpha}(c|I|)
        \lesssim_\alpha
        |I|^{1/\alpha}
        =
        |I|^{q/p}.
\]
The same estimate for \(|I|\ge c^{-1}\) follows from the finiteness of
\(\mu_\alpha\). Together with \((\mu_\alpha)_\T=0\),
Proposition~\ref{prop:carleson}\textup{(i)} shows that
\(C_{\varphi_\alpha}:H^p\to H^q\) is bounded.

On the other hand, for \(0<h<c^{-1}\),
\[
\begin{aligned}
        \sup_{\substack{I\in\mathcal I\\ |I|\le ch}}
        \frac{(\mu_\alpha)_\D(S(I))}{|I|^{q/p}}
        &\ge
        \sup_{\xi\in\T}
        \frac{(\mu_\alpha)_\D(S(I_{\xi,ch}))}{(ch)^{1/\alpha}}\\
        &\ge
        c^{-1/\alpha}
        \frac{\rho_{\varphi_\alpha}(h)}{h^{1/\alpha}}
        \gtrsim_\alpha 1.
\end{aligned}
\]
Thus the vanishing condition in
Proposition~\ref{prop:carleson}\textup{(i)} fails, and
\(C_{\varphi_\alpha}:H^p\to H^q\) is not compact. Its strict
singularity follows from
Corollary~\ref{cor:p-lt-q}.
\end{proof}

\begin{proof}[Proof of Corollary~D]
Suppose first that \(q<p\). By
Theorem~\ref{thm:q-lt-p-identity}, compactness, strict singularity,
and the condition \(m(E_\varphi)=0\) are equivalent. For
\(\iota(z)=z\), the operator \(C_\iota:H^p\to H^q\) is the canonical
inclusion \(H^p\hookrightarrow H^q\). Since \(m(E_\iota)=1\),
\(C_\iota\notin\mathcal C_0(H^p,H^q)\). Therefore
\[
        \CK(H^p,H^q)
        =
        \CS(H^p,H^q)
        =
        \mathcal C_0(H^p,H^q)
        \subsetneq
        \CB(H^p,H^q).
\]

By Theorem~\ref{thm:q-lt-p-class},
a bounded composition operator fixes a copy of \(\ell^r\) if and
only if it fixes a copy of \(L^r(0,1)\). This occurs precisely when
the operator is noncompact and either
\(1<p<2\) with \(p<r\le2\), or \(p\ge2\) with \(r=2\).
For these exponents, belonging to either relative singularity class
is equivalent to compactness, so both classes coincide with
\(\CK(H^p,H^q)\). For all remaining exponents, no bounded composition
operator fixes a copy of \(\ell^r\) or \(L^r(0,1)\), and both relative
singularity classes coincide with \(\CB(H^p,H^q)\). This proves
part~\textup{(i)}.

Suppose next that \(p<q\). By
Corollary~\ref{cor:boundary-obstruction}, every bounded
composition operator \(C_\varphi:H^p\to H^q\) satisfies
\(m(E_\varphi)=0\). Corollary~\ref{cor:p-lt-q}
then gives, for every \(1\le r<\infty\),
\[
\begin{aligned}
        \CK(H^p,H^q)
        &\subsetneq
        \CS(H^p,H^q)
        =
        \CSell{r}(H^p,H^q)\\
        &=
        \CSL{r}(H^p,H^q)
        =
        \mathcal C_0(H^p,H^q)
        =
        \CB(H^p,H^q),
\end{aligned}
\]
where the strict inclusion follows from
Proposition~\ref{prop:lens-example}. This proves
part~\textup{(ii)}.

Finally, suppose that \(p=q\). By Theorem~C,
\(\CK(H^p,H^p)=\CS(H^p,H^p)\).

Consider \(\psi(z)=(1+z)/2\). Since
\[
        |\psi(e^{it})|
        =
        \left|\cos\frac{t}{2}\right|,
        \qquad
        |1-\psi(e^{it})|
        =
        \left|\sin\frac{t}{2}\right|,
\]
we have \(E_\psi=\{1\}\), and hence
\(C_\psi\in\mathcal C_0(H^p)\) and \((\mu_\psi)_\T=0\).
Let \(c\ge1\) be the absolute constant in
\eqref{eq:box-window}. For \(0<h<1/c\), let
\(I_{1,ch}\) be the arc centered at \(1\) with normalized length
\(ch\). Then \(S(1,h)\subset S(I_{1,ch})\). Moreover,
\[
\begin{aligned}
        (\mu_\psi)_\D(S(1,h))
        &=
        m\left(
        \left\{
        e^{it}:
        \left|\psi(e^{it})-1\right|\le h
        \right\}
        \right)\\
        &=
        m\left(
        \left\{
        e^{it}:
        \left|\sin\frac{t}{2}\right|\le h
        \right\}
        \right)
        =
        \frac{2}{\pi}\arcsin h.
\end{aligned}
\]
It follows that
\[
\begin{aligned}
        \frac{(\mu_\psi)_\D(S(I_{1,ch}))}{|I_{1,ch}|}
        &\ge
        \frac{(\mu_\psi)_\D(S(1,h))}{ch}\\
        &=
        \frac{2\arcsin h}{\pi ch}
        \longrightarrow
        \frac{2}{\pi c}
        \qquad
        (h\to0^+).
\end{aligned}
\]
Thus the vanishing Carleson condition in
Proposition~\ref{prop:carleson}\textup{(ii)} fails, so
\(C_\psi\) is not compact. Hence
\(\CK(H^p,H^p)\subsetneq\mathcal C_0(H^p)\).

The identity symbol \(\iota(z)=z\) satisfies \(m(E_\iota)=1\), so
\(C_\iota\in\CB(H^p,H^p)\setminus\mathcal C_0(H^p)\). Therefore
\[
        \CK(H^p,H^p)
        =
        \CS(H^p,H^p)
        \subsetneq
        \mathcal C_0(H^p)
        \subsetneq
        \CB(H^p,H^p).
\]

It remains to identify the relative fixed-copy classes. By definition,
for every bounded composition operator,
\[
\begin{aligned}
        C_\varphi\in\CSell{r}(H^p,H^p)
        &\Longleftrightarrow
        r\notin\operatorname{Fix}_{\ell}(C_\varphi),\\
        C_\varphi\in\CSL{r}(H^p,H^p)
        &\Longleftrightarrow
        r\notin\operatorname{Fix}_{L}(C_\varphi).
\end{aligned}
\]
Theorem~C shows that a composition operator fixes a copy of
\(\ell^p\) if and only if it is noncompact. Hence
\(\CSell{p}(H^p,H^p)=\CK(H^p,H^p)\).

If \(1\le p<2\) and \(p<r\le2\), or if \(p>2\) and \(r=2\),
Theorem~C shows that \(C_\varphi\) fixes a copy of \(\ell^r\) if and
only if \(m(E_\varphi)>0\). The corresponding relative class is
therefore \(\mathcal C_0(H^p)\). For all remaining exponents, no
bounded composition operator fixes a copy of \(\ell^r\), and the
relative class is \(\CB(H^p,H^p)\). This gives the asserted formula
for \(\CSell{r}(H^p,H^p)\), including \(p=2\).

The function space cases follow in the same way. If \(p=1\), a copy
of \(L^r(0,1)\) is fixed exactly when \(m(E_\varphi)>0\) and
\(1<r\le2\). If \(1<p<2\), the corresponding range is
\(p\le r\le2\). If \(p=2\), a copy of \(L^2(0,1)\) is fixed exactly
when the operator is noncompact, and no other exponent occurs. If
\(p>2\), a copy of \(L^r(0,1)\) is fixed exactly when
\(m(E_\varphi)>0\) and \(r\in\{2,p\}\). These alternatives give the
asserted formula for \(\CSL{r}(H^p,H^p)\). This proves
part~\textup{(iii)} and completes the proof of Corollary~D.
\end{proof}

\noindent {\bf Data Availability}  No data was used to support this study.\smallskip
	
\noindent {\bf Conflicts of Interest}  The authors  declare that they have no conflicts of interest.\smallskip

\noindent  {\bf Acknowledgements}  The first author was supported by the Department of Education of Guangdong
Province (Grant No.~2023KTSCX072), the Guangdong Basic and Applied Basic
Research Foundation (Grant No.~2024A1515012551), and Lingnan Normal
University (Grant No.~LT2410). The corresponding author was supported by the
National Natural Science Foundation of China (Grant No.~12371131) and the Guangdong
Basic and Applied Basic Research Foundation (Grant No.~2024A1515012404).
\end{document}